\documentclass[12pt]{article}
\usepackage{authblk}
\usepackage[T1]{fontenc}
\usepackage{inputenc}
\usepackage{babel}
\usepackage{array}
\usepackage{float}
\usepackage{textcomp}
\usepackage{multirow}
\usepackage{enumitem}
\usepackage{graphicx}
\usepackage{natbib}
\usepackage{hyperref}
\usepackage{graphicx} 
\usepackage{amsmath}
\usepackage{amssymb}
\usepackage{setspace}
\usepackage[linesnumbered,ruled,vlined]{algorithm2e}
\usepackage[most]{tcolorbox}
\usepackage{geometry}
\usepackage{soul}
\usepackage{empheq}
\usepackage{moresize}
\usepackage[left]{lineno}
\allowdisplaybreaks

\DeclareMathOperator*{\argmin}{arg\,min}
\usepackage{amsthm}
\usepackage{authblk}
\usepackage{tabularray}
\usepackage{longtable}
\usepackage{varwidth}
\usepackage{pdflscape}
\usepackage{caption}
\usepackage{subfig}
\usepackage{longtable}
\usepackage{array}
\usepackage{booktabs}
\usepackage{ragged2e}
\usepackage{caption}
\usepackage{longtable}
\usepackage{array}
\usepackage{booktabs}
\usepackage{ragged2e}

\hypersetup{
	colorlinks=true,
	linkcolor=blue,
	filecolor=blue,      
	urlcolor=cyan,
	citecolor=blue,
	pdftitle={Overleaf Example},
	pdfpagemode=FullScreen,
}
\newtheoremstyle{exampstyle1}
{8pt} 
{8pt} 
{} 
{} 
{\bfseries} 
{.} 
{3pt} 
{} 
\newtheorem{theorem}{Theorem}[section]
\newtheorem{lemma}[theorem]{Lemma}
\newtheorem{corollary}[theorem]{Corollary}
\theoremstyle{exampstyle1}\newtheorem{assumption}{Assumption}
\newtheorem{definition}{Definition}[subsection]
\newtheorem{Remark}{Remark}
\title{Estimation of multiple precision matrices under shared support with heterogeneous edge strengths}
\vspace{1cm}
\author{Sayan Ranjan Bhowal\thanks{Corresponding author: Theoretical Statistics and Mathematics Unit, Indian Statistical Institute, Kolkata} \;  Debashis Paul\thanks{Applied Statistics Division, Indian Statistical Institute, Kolkata} \; Gopal K Basak\thanks{Theoretical Statistics and Mathematics Unit, Indian Statistical Institute, Kolkata} \; Samarjit Das\thanks{Economic-Research Unit, Indian Statistical Institute, Kolkata}}
\date{}

\begin{document}
	\maketitle
	\begin{abstract}
		Estimating multiple precision matrices in high-dimension presents significant challenges, particularly when distinct datasets share a common conditional dependency structure but exhibit population-specific interaction strengths. We address this problem by introducing the Multiplicative Graphical Lasso (Mglasso), a method for jointly estimating precision matrices across multiple Gaussian graphical models under a shared sparsity constraint. Each precision matrix is decomposed as a Schur-Hadamard product of a shared structural matrix $\boldsymbol{\Theta}$, which encodes the common conditional independence graph, and a population-specific matrix $\boldsymbol{\Gamma}_{l}$, which captures variation in edge strengths across populations. We optimize a penalized log-likelihood that utilizes an $\ell_1$-penalty to enforce common sparsity and a Frobenius norm penalty to regulate population-specific variations. The optimization is efficiently performed using the Alternating Direction Method of Multipliers (ADMM) algorithm integrated with gradient descent. Theoretically, we establish the local strict convexity of the objective function and provide rigorous high-dimensional consistency guarantees, including supremum norm error bounds and exact support recovery under sub-Gaussian tail conditions. Extensive simulations show superior model selection consistency at smaller sample sizes compared to the benchmark Group Graphical Lasso (GGL). Finally, the method's practical utility is further validated through real-world applications.
	\end{abstract}
	\vspace{1em}
	\noindent\textbf{Keywords:} Precision matrix estimation; Sparsity; Graphical lasso; ADMM; Penalized likelihood; Sub-Gaussian distributions.
	\newpage
	\tableofcontents
	\section{Introduction}\label{sec-intro}
	
	Estimating precision matrices in a high-dimensional regime, where the number of variables $p$ is comparable to the sample size $n$, presents significant challenges. The sample covariance matrix $\hat{\boldsymbol{\Sigma}}=\frac{1}{n}\boldsymbol{X}^{T}\boldsymbol{X}$ becomes unreliable in this situation: while consistent for fixed $p$, it performs poorly as $p$ increases with $n$. In particular, when $p/n\rightarrow c>0$, its spectral quantities fail to consistently estimate their population counterparts (see e.g., \cite{johnstone2001distribution},\cite{johnstone2009sparseprincipalcomponentsanalysis}). This limitation is critical, particularly in a Gaussian graphical model, where $\boldsymbol{\Omega}=\boldsymbol{\Sigma}^{-1}$ encodes conditional dependencies: entries $\Omega_{ij}=0$ indicate conditional independence between variables $x_i$ and $x_j$.
	
	\indent To address this, regularized estimators of $\boldsymbol{\Sigma}$ or $\boldsymbol{\Omega}$ have been developed. The graphical lasso (\cite{friedman2008sparse}) estimates $\boldsymbol{\Omega}$ by maximizing a penalized Gaussian log-likelihood with an $l_1$-penalty that promotes sparsity, under the constraint $\boldsymbol{\Omega}\succ 0$. To attenuate the bias problem inherent in the $l_1$-penalty, \cite{10.1214/08-AOAS215} introduced nonconcave penalties, such as SCAD and the adaptive LASSO, recasting the estimation as a sequence of weighted $l_1$ penalized problems via local linear approximation. On the theoretical side, \cite{10.1214/08-EJS176} established consistency rates in Frobenius and operator norms that explicitly depend on the sparsity level of the true inverse covariance. Expanding on the graphical lasso approach, \cite{10.1214/11-EJS631} reframed the estimator as minimizing an $\ell_1$-penalized log-determinant Bregman divergence. The authors derived sharper high-dimensional convergence rates and exact support recovery beyond Gaussian assumptions. Building on this corpus, \cite{10.1214/15-EJS1031} proposed a principled debiasing scheme derived via inversion of the KKT conditions, thereby enabling valid inference for low-dimensional functionals of the precision matrix even when $p$ is large relative to $n$. Further extensions include rank-based estimation for nonparanormal models (\cite{xue2012regularized}) and constrained likelihood estimation to directed acyclic graphs (\cite{yuan2019constrained}).
	
	\indent Interest in jointly estimating precision matrices across multiple Gaussian graphical models has grown considerably, with methods exploiting shared structure to improve estimation when networks exhibit common sparsity patterns. Early work by \cite{guo2011joint} decomposed each precision entry as $\Omega_{l,ij}=\theta_{ij}\gamma_{l,ij}$, penalizing shared and individual components separately. \cite{danaher2014joint} introduced the Fused Graphical Lasso (FGL) and the Group Graphical Lasso (GGL), penalizing discrepancies between group estimates and enforcing consistent sparsity, respectively. \cite{lee2015joint} decomposed each precision matrix as the sum of a common and a group-specific varying component. The estimation procedure is motivated by CLIME estimation (\cite{cai2011constrained}). \cite{ma2016joint} took a neighborhood-selection approach (\cite{10.1214/009053606000000281}) that incorporates structural information by first estimating each model's edge set, and then maximizing the group-specific likelihood constrained to have zeros outside it. \cite{shan2020joint} proposed a two-level graphical lasso that clusters variables into subgroups and models conditional dependencies as the product of a higher-level subgroup factor and a lower-level individual factor. Numerous other joint estimation strategies have been proposed since. For a comprehensive summary of methods, see the survey paper by \cite{tsai2022joint}.

	\indent Although several methods have been developed, none of them truly bring out the common sparsity pattern of the graphs with the leverage of differences in the common edges. \cite{guo2011joint} has the penalty term of $\sum_{l=1}^{L}\sum_{i\neq j}|\gamma_{ij}^{l}|$, which is too strict and underestimates the individual components. The closest among the regularization-based approaches that can be used for estimating precision matrices under a common sparsity constraint is the GGL. However, there may be situations in which the networks contain unique edges. In this paper, we propose an estimation procedure called Multiplicative Graphical Lasso (Mglasso), which strictly works with the common edges among the networks and estimates the precision matrices. By first identifying shared structural patterns and then modeling individual variations only where that shared structure exists, this approach reduces parametric complexity. Simulation studies further demonstrate that it outperforms GGL, the benchmark competitor, in both model selection consistency and estimation precision.
	The remainder of this paper is structured as follows. Sections \ref{Section 2.1} and \ref{Section 2.2} introduce the Mglasso optimization problem and examine its bi-convexity property. Section \ref{Section 2.3} outlines the proposed methodology and the corresponding algorithm for solving the problem. Section \ref{Section 3} establishes the theoretical properties of the estimator. Section \ref{Section 4} demonstrates the performance of the proposed approach through experiments on synthetic and real-world datasets. A simulation example with heterogeneous sparsity structure has been presented in Section \ref{Section 4.2}. Detailed algorithm steps and outline, proofs of the local strict convexity and the main result along with all relevant lemmas, and supplementary application details are given in the Supplementary materials.
	
	\section{Multiplicative Graphical Lasso (Mglasso)}\label{Section 2}
	\subsection{Problem setup}\label{Section 2.1}
	Let $L$ denote the number of populations. The observations from population $l$, $\boldsymbol{x}_{li}$, are independently random samples from a $p$-dimensional Gaussian distribution with mean $\boldsymbol{0}$ and covariance matrix $\boldsymbol{\Sigma}_{l}$, $i=1,2,\ldots,n_l$, where $n_l$ is the number of samples from population $l$. We jointly estimate the precision matrices $\boldsymbol{\Omega}_l=\boldsymbol{\Sigma}_{l}^{-1}$ for all $L$ populations from their respective $n_l$ observations, assuming that the precision matrices share a common sparsity structure. We represent $\boldsymbol{\Omega}_{l}$ as the  Schur-Hadamard product of two matrices, i.e., $\boldsymbol{\Omega}_{l}=\boldsymbol{\Theta}\odot \boldsymbol{\Gamma}_{l}\ ,l=1,2,\ldots,L$. The proposed multiplicative decomposition is motivated by applications where multiple populations share a common sparsity pattern while exhibiting population-specific interaction strengths. Examples include brain connectivity networks across disease groups, gene regulatory networks across tissues, and financial dependency networks across market regimes. In such settings, the existence of an interaction between two variables is often governed by a common underlying mechanism, whereas the magnitude of interaction varies across populations. The matrix $\boldsymbol{\Theta}$ therefore captures the shared structural support, while $\boldsymbol{\Gamma}_{l}$ characterizes population-specific modulation of edge strengths. For identifiability of the parameters, we need to impose constraints on the matrices. In addition to assuming symmetry of $\boldsymbol{\Theta}$ and $\boldsymbol{\Gamma}_{l}$ for all $l=1,2,\ldots,L$, we need the following assumptions:
	\begin{assumption}\label{Assumption 1}
		If $\Theta_{ij}=0$ for some $(i,j)\in V\times V$, where $V$ is the set of vertices (coordinates), then $\Gamma_{l,ij}=0$ for all $l=1,2,\ldots,L$.
	\end{assumption}
	\begin{assumption}\label{Assumption 2}
		If $\Theta_{ij}\neq 0$ for some $(i,j)\in V\times V$, then $\overline{\Gamma}_{ij}=1$.
	\end{assumption}
	For some $(i,j) \in V \times V$, if $\Theta_{ij}=0$, Assumption \ref{Assumption 1} resolves the identifiability issue of $\Gamma_{l,ij}$ at those pairs. Additionally, note that $\boldsymbol{\Omega}_{l}=\boldsymbol{\Theta}\odot \boldsymbol{\Gamma}_{l}=a\boldsymbol{\Theta}\odot \frac{1}{a}\boldsymbol{\Gamma}_{l}$, for some scalar $a\neq 0$. Under Assumption \ref{Assumption 2}, the identifiability issue for the entire matrix $\boldsymbol{\Gamma}_l$ matrix is solved. As a result, we have $\overline{\boldsymbol{\Omega}}=\boldsymbol{\Theta}$. Also, since $\overline{\Gamma}_{ij}=\mathbb{I}(|\Theta_{ij}|>0)$, $\overline{\boldsymbol{\Gamma}}$ is a function of $\boldsymbol{\Theta}$. Under these assumptions, we solve the penalized log-likelihood optimization problem
	\begin{equation}\label{(1)}
		(\hat{\boldsymbol{\Omega_l}})_{\lambda,\mu}\in \argmin_{\substack{\boldsymbol{\Theta,\Gamma_l}\\ l=1,2,\ldots,L}}\sum_{l=1}^{L}-\log\ \det(\boldsymbol{\Theta}\odot \boldsymbol{\Gamma}_l)+tr(\boldsymbol{S}_l(\boldsymbol{\Theta}\odot \boldsymbol{\Gamma}_l))+\lambda l_1(\boldsymbol{\Theta})+\mu \sum_{l=1}^{L}||\boldsymbol{\Gamma}_l-\boldsymbol{\overline{\Gamma}(\Theta)}||_{F}^2,
	\end{equation} where $||.||_{F}^2$ is the square of the Frobenius norm of a matrix, subject to the constraint $\frac{1}{L}\sum_{l=1}^{L}\boldsymbol{\Gamma_l}=\boldsymbol{\overline{\Gamma}(\Theta)}$. The first penalty term $l_1(\boldsymbol{\Theta})$ controls sparsity across all the models, while the second penalty term $||\boldsymbol{\Gamma}_l-\boldsymbol{\overline{\Gamma}(\Theta)}||_{F}^2$ controls variation in the population-specific components of the precision matrices. Moreover, $\boldsymbol{\Gamma}_l$ depends on $\boldsymbol{\Theta}$, since $\Theta_{ij}=0$ implies $\Gamma_{l,ij}=0$. But,  since $\Theta_{ij}=0$ also implies $\overline{\Gamma}_{ij}=0$, the Frobenius norm penalty will force the corresponding terms of $\Gamma_{l,ij}$ to be negligible after optimization. Hence, we can ignore the dependence of $\boldsymbol{\Gamma_l}$ on $\boldsymbol{\Theta}$ and just consider $\boldsymbol{\overline{\Gamma}}$ to be a function of $\boldsymbol{\Theta}$.  For the optimization, we need to look at the convexity of the negative log-determinant term of the likelihood with respect to the Schur-Hadamard product structure, and if the log-determinant term is biconvex in its arguements, then alternating minimization techniques can be employed.
	
	\subsection{Bi-convexity of log-determinant}\label{Section 2.2}
	Let $\boldsymbol{x}$ be a $p$-dimensional Gaussian random vector with mean $\boldsymbol{0}$ and covariance matrix $\boldsymbol{\Sigma}$. Define $g(\boldsymbol{\Sigma}^{-1})=g(\boldsymbol{\Theta},\boldsymbol{\Gamma})=-\log\ \det(\boldsymbol{\Sigma}^{-1})=-\log\ \det(\boldsymbol{\Theta}\odot\boldsymbol{\Gamma})$, and let $g_1(\boldsymbol{\Gamma})=g(\boldsymbol{\Theta},\boldsymbol{\Gamma})$, with $\boldsymbol{\Theta}$ fixed.
	\begin{lemma}
		A function is convex if and only if its restriction to every line is convex. (\cite{citeulike:163662}, p. 67)
	\end{lemma}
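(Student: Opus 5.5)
We prove the standard fact cited from Boyd and Vandenberghe: a function $f:\mathrm{dom} f\subseteq\mathbb{R}^n\to\mathbb{R}$ with convex domain is convex if and only if, for every $\boldsymbol{x}\in\mathrm{dom} f$ and every direction $\boldsymbol{v}$, the function $h(t)=f(\boldsymbol{x}+t\boldsymbol{v})$ is convex on $\{t:\boldsymbol{x}+t\boldsymbol{v}\in\mathrm{dom} f\}$. This set is an interval, being the preimage of a convex set under an affine map. The argument works directly from the definition of convexity, treating each implication separately. The statement applies equally to the matrix setting of Section \ref{Section 2.2}, since symmetric matrices form a finite-dimensional vector space. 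There $\boldsymbol{x}$ is replaced by $\boldsymbol{\Gamma}$ and $\boldsymbol{v}$ by a symmetric direction matrix.

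\emph{Necessity.} Suppose $f$ is convex, and take $t_1,t_2$ in the domain of $h$ and $\theta\in[0,1]$. By linearity of the parametrization,
\[
\boldsymbol{x}+(\theta t_1+(1-\theta)t_2)\boldsymbol{v}=\theta(\boldsymbol{x}+t_1\boldsymbol{v})+(1-\theta)(\boldsymbol{x}+t_2\boldsymbol{v}).
\]
Applying convexity of $f$ to the right-hand side gives $h(\theta t_1+(1-\theta)t_2)\le \theta h(t_1)+(1-\theta)h(t_2)$, so $h$ is convex.

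\emph{Sufficiency.} Suppose every restriction $h$ is convex, and take $\boldsymbol{y},\boldsymbol{z}\in\mathrm{dom} f$ and $\theta\in[0,1]$. Choose the line through $\boldsymbol{z}$ in direction $\boldsymbol{v}=\boldsymbol{y}-\boldsymbol{z}$, so that $h(t)=f(\boldsymbol{z}+t(\boldsymbol{y}-\boldsymbol{z}))$. Then $h(0)=f(\boldsymbol{z})$, $h(1)=f(\boldsymbol{y})$, and $h(\theta)=f(\theta\boldsymbol{y}+(1-\theta)\boldsymbol{z})$. Since $0$ and $1$ lie in the domain of $h$ and that domain is an interval, $\theta=\theta\cdot 1+(1-\theta)\cdot 0$ also lies in it. Convexity of $h$ then gives $h(\theta)\le\theta h(1)+(1-\theta)h(0)$, which is exactly the convexity inequality for $f$.

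The only delicate point is the domain bookkeeping. We must check that each restricted domain is an interval, which uses convexity of $\mathrm{dom} f$, and that the points used in the sufficiency step lie in it. In the intended application the domain is $\{\boldsymbol{\Gamma}:\boldsymbol{\Theta}\odot\boldsymbol{\Gamma}\succ 0\}$. This set is convex because $\boldsymbol{\Gamma}\mapsto\boldsymbol{\Theta}\odot\boldsymbol{\Gamma}$ is linear and the positive definite cone is convex, so the lemma applies to $g_1$.
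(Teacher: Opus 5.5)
Your proof is correct, and it is the standard definitional argument behind the Boyd--Vandenberghe fact; the paper cites that fact without giving any proof of its own. Your domain check is also apt: for $g_1$ the relevant domain is $\{\boldsymbol{\Gamma}:\boldsymbol{\Theta}\odot\boldsymbol{\Gamma}\succ 0\}$, which is convex as the preimage of the positive definite cone under a linear map, and this is the right condition, rather than the paper's requirement that $\boldsymbol{\Gamma}+t\boldsymbol{V}$ itself be positive definite.
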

	Let $g_2(t)=-\log\ \det(\boldsymbol{\Theta}\odot(\boldsymbol{\Gamma}+t\boldsymbol{V}))$, where $\boldsymbol{\Gamma}+t\boldsymbol{V}$ is positive definite. Then,
	\begin{eqnarray*}
		g_2(t)&=&-\log\ \det(\boldsymbol{\Theta}\odot(\boldsymbol{\Gamma}+t\boldsymbol{V}))\\
		&=& -\log\ \det((\boldsymbol{\Theta}\odot\boldsymbol{\Gamma})+t(\boldsymbol{\Theta}\odot\boldsymbol{V}))\\
		&=& -\log\ \det(\boldsymbol{\Sigma^{-1}}+t\boldsymbol{\Delta}),\text{ where }\boldsymbol{\Delta}=\boldsymbol{\Theta}\odot\boldsymbol{V}
	\end{eqnarray*}
	Since $\boldsymbol{\Sigma}^{-1}$ is a positive definite matrix, we have $\boldsymbol{\Sigma}^{-1}=\boldsymbol{\Sigma}^{-1/2}\boldsymbol{\Sigma}^{-1/2}$. Then,
	\begin{eqnarray*}
		g_2(t)&=&-\log\ \det(\boldsymbol{\Sigma^{-1/2}}(I_p+t\boldsymbol{\Sigma^{1/2}\Delta\Sigma^{1/2}})\boldsymbol{\Sigma^{-1/2}})\\
		&=& -\log\ \det (\boldsymbol{\Sigma^{-1}})-\sum_{i=1}^{p}\log(1+t\lambda_i),
	\end{eqnarray*}
	where $\lambda_i$'s are the eigenvalues of $\boldsymbol{\Sigma^{1/2}\Delta\Sigma^{1/2}}$. Thus, $g_{2}^{''}(t)=\sum_{i=1}^{p}\frac{\lambda_{i}^{2}}{(1+t\lambda_i)^2}\geq 0$. Hence, $g_1(\boldsymbol{\Gamma})$ is a convex function of $\boldsymbol{\Gamma}$. Similarly, we can also prove that $g_1(\boldsymbol{\Theta})=g(\boldsymbol{\Theta},\boldsymbol{\Gamma})$, with $\boldsymbol{\Gamma}$ fixed, is also a convex function of $\boldsymbol{\Theta}$. Hence, the negative log-determinant function is biconvex when the precision matrix is assumed to be the Schur-Hadamard product of two matrices $\boldsymbol{\Theta}$ and $\boldsymbol{\Gamma}$.
	\subsection{Optimization via ADMM with Gradient Descent}\label{Section 2.3}
	We solve the optimization problem in (\ref{(1)}) by alternatively fixing $\boldsymbol{\Gamma}_{l}$ and $\boldsymbol{\Theta}$. By denoting the objective function from (\ref{(1)}) as $\mathcal{L}_{\lambda,\mu}(\boldsymbol{\Theta,\Gamma}_{l})$, we can state the problem as
	\begin{equation}\label{(2)}
		\underset{\boldsymbol{\Theta,\Gamma}_l,l=1,2,\ldots,L}{min}\mathcal{L}_{\lambda,\mu}(\boldsymbol{\Theta,\Gamma}_{l}),
	\end{equation}
	subject to the constraint $\frac{1}{L}\sum_{l=1}^{L}\boldsymbol{\Gamma}_{l}=\boldsymbol{\overline{\Gamma}(\Theta)}$. We solve this problem using the Alternating Direction Method of Multipliers (ADMM) algorithm (\cite{boyd2011distributed}). The augmented Lagrangian form of the optimization problem is given by
	\begin{eqnarray}
		\notag
		& &\underset{\boldsymbol{\Theta},\boldsymbol{\mathcal{G}},\boldsymbol{R}}{min}-\sum_{l=1}^{L}\log\:\det(\boldsymbol{\Theta}\odot\boldsymbol{\Gamma}_{l})+\sum_{l=1}^{L}tr(\boldsymbol{S}_{l}(\boldsymbol{\Theta}\odot\boldsymbol{\Gamma}_{l}))+\lambda l_{1}(\boldsymbol{R})+tr(\boldsymbol{B}(\boldsymbol{\Theta}-\boldsymbol{R}))\\
		& &+\frac{1}{2\delta}||\boldsymbol{\Theta}-\boldsymbol{R}||_{F}^{2}+\mu\sum_{l=1}^{L}||\boldsymbol{\Gamma}_{l}-\bar{\boldsymbol{\Gamma}}(\boldsymbol{\Theta})||_{F}^{2}=\underset{\boldsymbol{\Theta},\boldsymbol{\mathcal{G}},\boldsymbol{R}}{min}\mathcal{L}_{\lambda,\mu,\delta}(\boldsymbol{\Theta},\boldsymbol{\mathcal{G}},\boldsymbol{R},\boldsymbol{B}),
	\end{eqnarray}
	subject to the constraint $\frac{1}{L}\sum_{l=1}^{L}\boldsymbol{\Gamma}_l=\overline{\boldsymbol{\Gamma}}(\boldsymbol{\Theta})$, where $\mathcal{G}=\{\boldsymbol{\Gamma}_{l},l=1,2,\ldots,L\}$. Here $\lambda$ and $\mu$ are the penalty parameters. The ADMM updates at the $k^{th}$ iteration are 
	\begin{eqnarray}\label{(4)}
		\boldsymbol{\Theta}^k &=& \argmin_{\boldsymbol{\Theta}}\mathcal{L}_{\lambda,\mu,\delta}(\boldsymbol{\Theta},\mathcal{G}^{k-1},\boldsymbol{R}^{k-1},\boldsymbol{B}^{k-1})\\
		\label{(5)}
		\boldsymbol{\mathcal{G}}^k &=& \argmin_{\boldsymbol{\mathcal{G}}}\mathcal{L}_{\lambda,\mu,\delta}(\boldsymbol{\Theta}^k,\mathcal{G},\boldsymbol{R}^{k-1},\boldsymbol{B}^{k-1})\\
		\label{(6)}
		\boldsymbol{R}^k &=& \argmin_{\boldsymbol{\boldsymbol{R}}}\mathcal{L}_{\lambda,\mu,\delta}(\boldsymbol{\Theta}^k,\mathcal{G}^k,\boldsymbol{R},\boldsymbol{B}^{k-1})\\
		\label{(7)}
		\boldsymbol{B}^k &=& \boldsymbol{B}^{k-1}+\frac{1}{\delta}(\boldsymbol{\Theta}^k-\boldsymbol{R}^k)
	\end{eqnarray}
	Since there are no closed-form expressions for the inverse of the Schur-Hadamard product of two matrices, we cannot solve the optimization problems (\ref{(4)}) and (\ref{(5)}) explicitly. So, we use the gradient descent method to solve them within each ADMM step. The detailed steps for the solution are given in the Appendix. We call the resulting algorithm the Multiplicative Graphical Lasso (Mglasso). The entire algorithm is summarised in the supplementary materials.
	\begin{Remark}
		(Tuning-parameter selection) The Extended Bayesian Information Criterion (EBIC) is a measure used for model and tuning-parameter selection. For $L$ populations, the EBIC takes the form
		\begin{eqnarray}\label{(16)}
			\notag
			\text{EBIC}_{\gamma}(\lambda)&=&-\sum_{l=1}^{L}\log\ \det(\boldsymbol{\Theta}\odot \boldsymbol{\Gamma_l})+\sum_{l=1}^{L}tr(\boldsymbol{S_l}(\boldsymbol{\Theta}\odot \boldsymbol{\Gamma_l}))+(L+1)l_0(\boldsymbol{\Theta})\log(N)\\
			& &+4\gamma(L+1)l_0(\boldsymbol{\Theta})\log(p),
		\end{eqnarray}
		$N=\sum_{l=1}^{L}n_{l}$, $p$ denotes the dimension, and $\gamma\in [0,1]$ is a parameter that penalizes complex models. If $\gamma=0$, we have the classical BIC. However, if $\gamma$ is positive, the criterion imposes a stronger penalty on non-sparse models; therefore, selection using EBIC will be more inclined to select sparse models when $\gamma>0$. For a detailed discussion on EBIC, we refer to \cite{chen2008extended} and \cite{foygel2010extended}. We select the optimal $\lambda$ and $\mu$ by EBIC minimization. We evaluate EBIC over a grid of $(\lambda,\mu)$ values and select $(\hat{\lambda}_{opt},\hat{\mu}_{opt})$ as the pair $(\lambda,\mu)$, for which the EBIC in (\ref{(16)}) is minimized.
	\end{Remark}
	\begin{Remark}
		\label{Remark 3}
		(Initial estimates) For the ADMM loop to start, we need an initial estimate of $\boldsymbol{\Theta},\boldsymbol{\Gamma_l},l=1,2,\ldots, L,\boldsymbol{R},\boldsymbol{B}$. Due to Assumptions \ref{Assumption 1} and \ref{Assumption 2}, we have the property that $\overline{\boldsymbol{\Omega}}=\boldsymbol{\Theta}$. We start with the graphical lasso estimates for the individual populations as the initial estimates for $\boldsymbol{\Omega}_{l}$, i.e., $\hat{\boldsymbol{\Omega}}_{l0}=\boldsymbol{\Omega}_{l}^{g}$. Then we have the initial estimates of $\boldsymbol{\Theta}_{0}=\frac{1}{L}\sum_{l=1}^{L}\boldsymbol{\Omega}_{l}^{g}$, and $\boldsymbol{R}_0=\boldsymbol{\Theta}_{0}$. For the initial estimates of $\boldsymbol{\Gamma}_{l}$, we have
		\begin{equation*}
			\Gamma_{l0,ij}=\begin{cases}
				\frac{\hat{\Omega}_{l0,ij}}{\Theta_{0,ij}},& \text{if }|\Theta_{0,ij}|>0\\
				0,& \text{if }\Theta_{0,ij}=0
			\end{cases}.
		\end{equation*}
		One limitation of this method is that if the initial estimate $\boldsymbol{\Theta}$ is sparser than the actual network, then this method may fail, since the updates are done coordinate-wise in each step, and if a coordinate value becomes zero at a particular iteration, it will change only if the corresponding coordinate of the Lagrangian multiplier $\boldsymbol{B}$ at initialization is non-zero. 
	\end{Remark}
	\section{Asymptotics}\label{Section 3}
	This section establishes the large-sample properties of the estimators derived from the optimization problem (\ref{(2)}) in a setting where both the dimension $p$ and the sample sizes $n_l,l=1,2,\ldots,L$ approach infinity. To facilitate the theoretical analysis, we first introduce the necessary notation. 
	\subsection{Notations}\label{Section 3.1}
	Denote $\boldsymbol{A}=(A_{ij})_{i,j=1}^{p}$ as a matrix, where $A_{ij}$ denotes the $(i,j)^{th}$ entry of $\boldsymbol{A}$. We denote $\boldsymbol{A}\otimes \boldsymbol{B}$ as the Kronecker product of two matrices $\boldsymbol{A}$ and $\boldsymbol{B}$. We denote by $\boldsymbol{A}^{+}$ the matrix $diag(\boldsymbol{A})$ and define $\boldsymbol{A}^{-}=\boldsymbol{A}-\boldsymbol{A}^{+}$ which is the matrix containing only the off-diagonal elements of $\boldsymbol{A}$. For any vector $\boldsymbol{x}\in\mathbb{R}^{p}$, we denote the $d$-norm of $\boldsymbol{x}$ by $||\boldsymbol{x}||_{d}$, $d\in(0,\infty]$. We use $||\boldsymbol{A}||_{F}^{2}=\sum_{i,j}A_{ij}^{2}$, $||\boldsymbol{A}||_{\infty}=\underset{i,j}{max}|A_{ij}|$, $|||\boldsymbol{A}|||_{\infty}=\underset{i}{max}\sum_{j}|A_{ij}|$, and $|||\boldsymbol{A}|||_{1}=|||\boldsymbol{A}^{T}|||_{\infty}$ as the squared Frobenius norm, supremum norm, $l_{\infty}$-operator norm, and $l_{1}$-operator norm of a matrix $\boldsymbol{A}$. We also denote $||\boldsymbol{A}||_{1}=\sum_{i,j}|A_{ij}|$ as the $l_1$-norm of $\boldsymbol{A}$. For sequences, we write $f_{n}=O(g_{n})$, if $f_{n}\leq w_1g_{n}$, for some $w_1<\infty$, and $f_{n}=\phi(g_{n})$, if $f_{n}>w_2g_{n}$, for some $w_2>0$. Then $f_{n}\asymp g_{n}$ if $f_{n}=O(g_{n})$, and $f_{n}=\phi(g_{n})$. We denote the set $\mathcal{S}_{++}^{p}=\{\boldsymbol{\Omega}:\boldsymbol{\Omega}=\boldsymbol{\Omega}^{T},\boldsymbol{\Omega}\succ0\}$ as the set of all symmetric positive definite matrices.
	\subsection{Tail conditions}\label{Section 3.2}
	We need bounds on the difference between the sample covariance matrices $\boldsymbol{S_l}$ and the true covariance matrices $\boldsymbol{\Sigma}_{l}^{0},l=1,2,\ldots,L$, for obtaining consistency results for our estimators. We refer to some definitions and lemmas from \cite{bhowal2025statisticalinferenceusingdebiased} for the theoretical analysis. However, for completeness, we restate the required tail conditions.
	\begin{definition}
		\label{Definition 1}
		Suppose we have random vectors $\boldsymbol{x}_{l}$ from population $l,l=1,2,\ldots,L$. Define $V$ as the set of coordinates of the random vectors. Then $\boldsymbol{x}_{l},l=1,2,\ldots,L$ satisfies the tail conditions $\mathcal{T}(f,v_{l0})$, if we have a $v_{l0}\in (0,\infty],l=1,2,\ldots,L$, and a function $f:\mathbb{N}\times(0,\infty)\rightarrow(0,\infty)$, such that $(i,j)\in V\times V$,
		\begin{equation}
			\label{(17)}
			\mathbb{P}(|S_{l,ij}-\Sigma_{l,ij}^{0}|>\delta)\leq \frac{1}{f(n_l,\delta)},
		\end{equation}
		for all $\delta\in \bigg(0,\underset{l}{min}\ \frac{1}{v_{l0}}\bigg],l=1,2,\ldots,L$.
	\end{definition}
	In the case of a single population, Definition \ref{Definition 1} reduces to the condition that $\boldsymbol{x}$ satisfies $\mathcal{T}(f,v_{0})$ if there exists a constant $v_{0}\in (0,\infty]$ and a mapping $f:\mathbb{N}\times(0,\infty)\rightarrow(0,\infty)$ such that for every variable pair $(i,j)\in V\times V$, the tail probability is bounded by:
	$$
	\mathbb{P}(|S_{ij}-\Sigma_{ij}^{0}|>\delta)\leq \frac{1}{f(n,\delta)}
	$$
	for any deviation $\delta\in (0,\frac{1}{v_{0}}]$, where $n$ represents the sample size.
	
	We can deduce the relationship between $f$, $n$, and $\delta$. As the sample size $n$ grows, the probability of a large deviation (the tail probability) should naturally shrink, meaning the denominator $f(n,\delta)$ must increase. Likewise, a larger allowable deviation $\delta$ also results in a smaller tail probability, which again requires $f(n,\delta)$ to increase. 
	
	Based on this monotonic behavior of $f(n,\delta)$ in these models, we can formally define two corresponding inverse functions for a given threshold $r$:
	\begin{equation}
		n_{f}(\delta,r)=\underset{n}{\arg\max}\{f(n,\delta)\leq r\}
		\label{(18)}
	\end{equation}
	and
	\begin{equation}
		\delta_{f}(n,r)=\underset{\delta}{\arg\max}\{f(n,\delta)\leq r\}
		\label{(19)}
	\end{equation}
	
	A direct, intuitive consequence of defining these inverse functions is the following implication:
	\begin{equation}
		n>n_{f}(\delta,r)\text{ for some }\delta>0\implies\delta_{f}(n,r)\leq \delta
		\label{(20)}
	\end{equation}
	
	\subsubsection{Sub-Gaussian distributions}
	\begin{definition}
		\label{Definition 2}
		(Sub-Gaussian random variables) A real-valued random variable $x$ with $\mathbb{E}(x)=0$ is sub-Gaussian if there exists a constant $K_1>0$ such that $\mathbb{E}(e^{x^2/K_1^2})\leq 2$. In other words, the tails of the distribution of $x$ decay at least as fast as those of a centered Gaussian random variable with variance $K_1^2$.
	\end{definition}
	\begin{definition}
		\label{Definition 3}
		(Sub-Gaussian condition for multiple populations) Let $\boldsymbol{x}_{l}$ denote a $p-$variate random vector from population $l$, with mean $\boldsymbol{0}$ and covariance matrix $\boldsymbol{\Sigma}_{l}^{0},l=1,2,\ldots,L$. We say $\boldsymbol{x}_{l}$ satisfy the sub-Gaussian condition if each standardized covariate $x_{li}/\sqrt{\Sigma_{l,ii}^{0}},i=1,2,\ldots,p$ is a sub-Gaussian random variable with a common parameter $K_1>0$ uniformly across all populations $l=1,2,\ldots,L$.
	\end{definition}
	We state the following lemma regarding the tail conditions of the sub-Gaussian random variables for multiple populations.
	\begin{lemma}
		Consider $L$ independent populations of zero-mean random vectors $\{\boldsymbol{x}_{l}\}_{l=1}^{L}$, where the $l^{th}$ population has covariance matrix $\boldsymbol{\Sigma}_{l}^{0}$ and satisfies the sub-Gaussian condition for multiple populations (Definition \ref{Definition 3}) with a constant $K_1>0$. Let $n_l$ denote the sample size for population $l$, and let $\boldsymbol{S}_l$ be its sample covariance matrix. Then for every pair of indices $(i,j)$, the absolute difference of the coordinates $|S_{l,ij}-\Sigma_{l,ij}^{0}|$ admits the exponential tail bound
		\begin{equation}
			\mathbb{P}(|S_{l,ij}-\Sigma_{l,ij}^{0}|>\delta)\leq 4\ exp\bigg(-\frac{n_l \delta^2}{128(1+12K_{1}^{2})^{2}\ max(\Sigma_{l,ii}^{0})^2}\bigg),
			\label{(21)}
		\end{equation}
		for all $\delta \in (0,\underset{l_1}{min}\ 8(1+12K_{1}^{2})max\ (\Sigma_{l_{1},ii}^{0})]$.
		\label{Lemma 0.1}
	\end{lemma}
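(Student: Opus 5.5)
This is the multi-population version of Lemma 1 of Ravikumar et al.\ (2011), and the plan is to reproduce that argument for each population separately. The uniformity comes from the common sub-Gaussian parameter $K_1$ and from taking the admissible range of $\delta$ as a minimum over populations. Fix $l$ and a pair $(i,j)$.

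\textbf{Step 1: reduce to sums of squares.} Write $x_{lk,i}$ for the $i$th coordinate of the $k$th sample from population $l$. Set $\bar x_{lk,i}=x_{lk,i}/\sqrt{\Sigma^0_{l,ii}}$ and $\rho_{l,ij}=\Sigma^0_{l,ij}/\sqrt{\Sigma^0_{l,ii}\Sigma^0_{l,jj}}$. Since $S_{l,ij}=\frac1{n_l}\sum_k x_{lk,i}x_{lk,j}$, the polarization identity gives
\[
4\bar x_i\bar x_j=(\bar x_i+\bar x_j)^2-(\bar x_i-\bar x_j)^2 .
\]
Hence
\[
\frac{4(S_{l,ij}-\Sigma^0_{l,ij})}{\sqrt{\Sigma^0_{l,ii}\Sigma^0_{l,jj}}}
=\frac1{n_l}\sum_k\big\{U_k^2-2(1+\rho_{l,ij})\big\}-\frac1{n_l}\sum_k\big\{V_k^2-2(1-\rho_{l,ij})\big\},
\]
where $U_k=\bar x_{lk,i}+\bar x_{lk,j}$ and $V_k=\bar x_{lk,i}-\bar x_{lk,j}$. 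Using $\sqrt{\Sigma^0_{l,ii}\Sigma^0_{l,jj}}\le \max_i\Sigma^0_{l,ii}$ and a union bound over the two terms, it suffices to show
\[
\mathbb P\Big(\Big|\frac1{n_l}\sum_k Z_k^2-\mathbb E Z^2\Big|>\frac{2\delta}{\max_i\Sigma^0_{l,ii}}\Big)\le 2\exp\Big(-\frac{n_l\delta^2}{128(1+12K_1^2)^2\max_i(\Sigma^0_{l,ii})^2}\Big)
\]
for $Z=U$ and $Z=V$. The two union-bound terms, each two-sided, produce the factor $4$.

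\textbf{Step 2: sub-Gaussianity of $U$ and $V$.} By Definition 3, $\bar x_i$ and $\bar x_j$ are sub-Gaussian with parameter $K_1$. Their sum and difference are therefore sub-Gaussian with parameter at most $2K_1$, by Minkowski in the Orlicz norm or directly by Cauchy--Schwarz. We also have $\mathbb E Z^2=2(1\pm\rho_{l,ij})\le 4$.

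\textbf{Step 3: concentration of the centred squares.} The centred variable $Z^2-\mathbb E Z^2$ is sub-exponential. I would bound its moment generating function explicitly, to make the constants traceable, in the form
\[
\mathbb E\exp\{t(Z^2-\mathbb E Z^2)\}\le \exp\{c\,t^2\}\quad\text{for }|t|\le t_0,
\]
with $c$ proportional to $(1+12K_1^2)^2$. The explicit bound comes from expanding the exponential as a series, bounding $\mathbb E Z^{2m}\le m!(2K_1)^{2m}\cdot 2$, which follows from the condition in Definition 2, and absorbing the variance term $\mathbb E Z^2\le 4$ to get a factor of the form $1+12K_1^2$. A Chernoff bound then gives
\[
\mathbb P\Big(\Big|\frac1{n_l}\sum_k Z_k^2-\mathbb E Z^2\Big|>\epsilon\Big)\le 2\exp\Big(-\frac{n_l\epsilon^2}{c'}\Big)
\]
for $\epsilon$ in a range $(0,\epsilon_0]$ with $\epsilon_0\propto 1+12K_1^2$. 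This quadratic regime is the origin of the upper limit on $\delta$.

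\textbf{Step 4: constants and uniformity over populations.} Substituting $\epsilon=2\delta/\max_i\Sigma^0_{l,ii}$ should give the stated constant $128(1+12K_1^2)^2$ and the range $\delta\le 8(1+12K_1^2)\max_i\Sigma^0_{l,ii}$. Intersecting these ranges over $l$ yields the common interval in the statement, which matches the requirement of Definition 1 with $f(n_l,\delta)=\frac14\exp(\cdot)$.

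\textbf{Main obstacle.} The main obstacle is Step 3: obtaining the precise constants $128$ and $1+12K_1^2$ from the moment generating function expansion. I would follow the computation in the appendix of Ravikumar et al.\ (2011) line by line, replacing their Gaussian-specific moments by the sub-Gaussian moment bound above.
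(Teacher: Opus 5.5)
Your proposal is correct and follows essentially the argument the paper relies on. The paper gives no proof of its own (it imports the lemma from its companion work), and the statement is precisely Lemma~1 of Ravikumar et al.\ (2011) applied population by population, with the admissible $\delta$-range then intersected over $l$. That is exactly what your Steps~1--4 reproduce.

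Two remarks on Step~3. First, a correction: Ravikumar et al.'s lemma is already stated for sub-Gaussian (not Gaussian) coordinates, in the form $\mathbb{E}e^{tx}\le e^{\sigma^2t^2/2}$. The factor $1+12K_1^2$ is just their $1+4\sigma^2$ with $\sigma^2=3K_1^2$, which is admissible under Definition~\ref{Definition 2}, so Step~3 can be cited rather than redone. Second, if you do redo it, you need not match the constants exactly. Any bound $\mathbb{E}e^{t(Z^2-\mathbb{E}Z^2)}\le e^{ct^2}$ on $|t|\le t_0$ with $c\le 128(1+12K_1^2)^2$ and $t_0\ge 1/\{16(1+12K_1^2)\}$ gives the stated exponent and range via Chernoff. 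Your moment bound $\mathbb{E}Z^{2m}\le m!(2K_1)^{2m}$ yields, for instance, $c=64K_1^4$ and $t_0=1/(8K_1^2)$, which comfortably meets both conditions.
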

	Thus for single population, Lemma \ref{Lemma 0.1} implies that the absolute difference of the coordinates of the sample covariance matrix and the population covariance matrix, i.e., $|S_{ij}-\Sigma_{ij}^{0}|$ satisfies (\ref{(21)}) for all $\delta \in (0, 8(1+12K_{1}^{2})max\ (\Sigma_{ii}^{0})]$, with $n$ as the sample size. Then we have the inverse mappings from its definitions as
	\begin{equation}
		\delta_{f}(n,r)=8\sqrt{2}(1+12K_1^2)max\ (\Sigma_{ii}^{0})\sqrt{\frac{\log(4r)}{n}},
		\label{(22)}
	\end{equation}
	and,
	\begin{equation}
		n_{f}(\delta,r)=128(1+12K_{1}^{2})^{2}max\ (\Sigma_{ii}^{0})^{2}\ \frac{\log(4r)}{\delta^2}.
		\label{(23)}
	\end{equation}
	As a consequence of Lemma \ref{Lemma 0.1} for multiple populations, if $\delta_{l}=\delta_{f}(n_l,p^{\gamma})$ from (\ref{(22)}) with $\gamma>2$, and $n_l$ is such that $\delta_{l}\leq \underset{l_1}{min}\ 8(1+12K_{1}^{2})max\ (\Sigma_{l_{1},ii}^{0})$, for all $l=1,2,\ldots, L$, then as a consequence of Lemma \ref{Lemma 0.1}, we have 
	\begin{equation}
		\mathbb{P}(||\boldsymbol{S}_{l}-\boldsymbol{\Sigma}_{l}^{0}||_{\infty}>\delta)\leq \mathbb{P}(||\boldsymbol{S}_{l}-\boldsymbol{\Sigma}_{l}^{0}||_{\infty}>\delta_{l})\leq \frac{1}{p^{\gamma-2}},
		\label{(24)}
	\end{equation}
	where $\delta=\underset{l}{max}\ \delta_{l}$. Then,
	\begin{equation}
		\delta=\underset{l}{max}\ 8\sqrt{2}(1+12K_1^2)max\ (\Sigma_{l,ii}^{0})\sqrt{\frac{\log(4p^{\gamma})}{n_l}},
		\label{(25)}
	\end{equation}
	which implies $\delta\asymp \sqrt{\frac{\log(p)}{n}}$, where $n_l\asymp n,l=1,2,\ldots,L$ and $n=\underset{l}{min}\ n_l$.
	
	\subsection{Local strict convexity}\label{Section 3.3}
	Note that the optimization problem in (\ref{(2)}) can also be rewritten as
	\begin{equation}\label{(27)}
		\{\boldsymbol{\Omega}_l,l=1,2,\ldots,L\}=\argmin_{\boldsymbol{\Omega}=\{\boldsymbol{\Omega}_{l},l=1,2,\ldots,L\}} Q(\boldsymbol{\Omega}),
	\end{equation}
	where,
	\begin{equation}\label{(28)}
		Q(\boldsymbol{\Omega})= \sum_{l=1}^{L}-\log\ \det(\boldsymbol{\Omega}_{l})+tr(\boldsymbol{S}_l\boldsymbol{\Omega}_{l})+\lambda \bigg|\bigg|\bigg(\frac{1}{L}\sum_{l=1}^{L}\boldsymbol{\Omega}_{l}\bigg)\bigg|\bigg|_{1}+\mu \sum_{l=1}^{L}\sum_{(i,j)}\frac{(\Omega_{l,ij}-\overline{\Omega}_{ij})^2}{(\overline{\Omega}_{ij})^2}\mathbb{I}\bigg(\overline{\Omega}_{ij}\neq 0\bigg).
	\end{equation}
	\begin{Remark}
		It is important to observe that when $L=1$, the contribution of the $\mu$-dependent penalty term in (\ref{(2)}) and (\ref{(28)}) reduces to zero. This reduction aligns the optimization with the conventional graphical lasso, a framework whose theoretical guarantees have been extensively analyzed (e.g., \cite{10.1214/08-EJS176}, \cite{10.1214/11-EJS631}). However, because this penalty term remains active when $L>1$, the resulting optimization diverges significantly from the $L=1$ case. Our ensuing analysis is therefore dedicated to this multi-population setting, i.e., when $L>1$. 
	\end{Remark}
	We denote $E_{l}=\{(i,j):\boldsymbol{\Omega}^{0}_{l,ij}\neq0\}$ as the set of all coordinates with non-zero elements in $\boldsymbol{\Omega}^{0}_{l}$ and $e_{l}=|E_{l}|$ as the cardinality of $E_{l}$. Under the assumption that the sparsity pattern of the precision matrices $\boldsymbol{\Omega}_{l}^{0}$ remains constant across the models, we have $E_l=E$, and $e_l=e$, for all $l=1,2,\ldots, L$. Define $d_{l}=\underset{i=1,2,\ldots,p}{max}|\{j\in V:\Omega_{l,ij}^{0}\neq 0\}|$ as the maximum row cardinality of model $l$ defined as the maximum number of non-zero coordinates in any row of $\boldsymbol{\Omega}_{l}^{0}$. If we consider Gaussian graphical models with precision matrices $\boldsymbol{\Omega}_{l}^{0}$, then $d_{l}$ represents the highest number of edges any vertex shares with the others, including the self-loop of each vertex. Since the sets $E_l$ are the same across the models, we have $d_{l}=d$, for all $l=1,2,\ldots, L$. Define $\kappa_{\boldsymbol{\Sigma}_{l}^{0}}=|||\boldsymbol{\Sigma}_{l}^{0}|||_{\infty}$ for $l=1,2,\ldots,L$ which measures the size of the entries of the covariance matrix for each model $l$. The Hessian of the negative log-likelihood function is given by 
	$$
	\boldsymbol{\eta}(\boldsymbol{\Omega}_{l})=\boldsymbol{\Sigma}_{l}^{0}\otimes \boldsymbol{\Sigma}_{l}^{0}. 
	$$ The Hessian at the true parameter $\boldsymbol{\Omega}^{0}_{l}$ is denoted by $\boldsymbol{\eta}_{l}^{0}$. Let us define $\boldsymbol{\eta}^{0}_{lT_{l}T_{l}'}$ with rows and columns of $\boldsymbol{\eta}_{l}^{0}$ indexed by $T_{l}$ and $T_{l}'$, where $T_{l}$ and $T_{l}'$ are subsets of $V$. Define  $$\boldsymbol{\eta}^{0}_{lEE}=[\boldsymbol{\Sigma}_{l}^{0}\otimes \boldsymbol{\Sigma}_{l}^{0}]_{EE},$$ and $\kappa_{\boldsymbol{\eta}_{l}^{0}}=|||(\boldsymbol{\eta}^{0}_{lEE})^{-1}|||_{\infty}$. Any regulation over $\kappa_{\boldsymbol{\eta}_{l}^{0}}$ implies regulation of sparsity in $\boldsymbol{\Omega}_{l}^{0}$.
	
	We now present the assumptions under which the theoretical properties of our estimator will be studied.
	\begin{assumption}\label{Assumption 3}
		(Bounded eigenvalues) There exists a constant $L_1$
		such that 
		\begin{equation*}
			0<L_{1}<\Lambda_{min}(\boldsymbol{\Omega}_{l}^{0})\leq\Lambda_{max}(\boldsymbol{\Omega}_{l}^{0})<1/L_{1}<\infty,
		\end{equation*}
		where $\Lambda_{min}$ and $\Lambda_{max}$ denote the minimum and maximum eigenvalues, for all $l=1,2,\ldots,L$.
	\end{assumption}
	\begin{assumption}\label{Assumption 4}
		(Sample size ratio and dimension) Let $n=\underset{l}{min}\ n_l$, where $n_l$ is the number of samples drawn from population $l$. Then the sample sizes are all of the same order as $n$, i.e.,  $n_l\asymp n,l=1,2,\ldots,L$. Also, the dimension $p$ is comparable to $n$
	\end{assumption}
	\begin{assumption}\label{Assumption 5}
		(Irrepresentability condition) We have an $\alpha\in(0,1]$, such that
		\begin{equation}
			\label{(29)}
			\underset{e \in E^{c}}{max}||\boldsymbol{\eta}_{leE}^{0}(\boldsymbol{\eta}_{lEE}^{0})^{-1}||_{1}\leq 1-\alpha,
		\end{equation}
		for every $l=1,2,\ldots,L$.
	\end{assumption}
	
	Assumption \ref{Assumption 3} and \ref{Assumption 4} are quite mild and are frequently adopted in the literature. Assumption \ref{Assumption 5} is imposed to ensure that the influence of non-edge elements in $E^c$ remains limited relative to those belonging to the edge set $E$ across all population models simultaneously. Consider defining the zero-mean edge variables for model $l$ as
	$$
	y_{l,i_1i_2}=X_{li_1}X_{li_2}-\mathbb{E}(X_{li_1}X_{li_2}).
	$$
	The covariance structure of these variables is encoded in the entries of $\boldsymbol{\eta}_{l}^{0}$. Thus, if an edge-type variable $y_{l,i_1i_2}$ corresponds to a pair outside the edge set $E$, Assumption \ref{Assumption 5} guarantees that it is not strongly correlated with variables associated with actual edges in $E$. The parameter $\alpha$ quantifies this level of dependence, and larger values of $\alpha$ (closer to 1) indicate weaker correlations, approaching zero as $\alpha\rightarrow 1$.
	
	The algorithm begins by taking the initial estimators for each population from the graphical lasso solutions, as outlined in Remark (\ref{Remark 3}). Define $\hat{E}_{l}=\{(i,j):\Omega_{l,ij}^{g}\neq 0\}$, where $\Omega_{l,ij}^{g}$ is the $(i,j)^{th}$ entry of the graphical lasso estimate of the precision matrix corresponding to the population model $l=1,2,\ldots, L$. With this definition in place, we obtain the following lemma:
	\begin{lemma}
		\label{Lemma 0.1.1}
		Suppose $\boldsymbol{x}_{il}\in \mathbb{R}^p$ are independent and identically distributed samples with mean $\boldsymbol{0}$ and covariance matrix $\boldsymbol{\Sigma}_{l}^{0},l=1,2,\ldots,L$ satisfying Assumption \ref{Assumption 5} for some $\alpha\in (0,1]$ and the tail conditions (\ref{(17)}) with $\mathcal{T}(f,v_{l0})$ for all $l=1,2,\ldots,L$. If $\boldsymbol{\Omega}_{l}^{g}$ is the unique solution to the optimization problem
		\begin{equation}
			\label{(30)}
			\underset{\boldsymbol{\Omega}_{l}}{arg\ min}\ -\log\ \det(\boldsymbol{\Omega}_{l})+tr(S_l\Omega_l)+\lambda_{1}||\boldsymbol{\Omega}_{l}||_{1},
		\end{equation}
		for all $l=1,2,\ldots,L$, with $\lambda_{1}=\frac{8}{\alpha}\delta$, where $\delta=\underset{l}{max}\ \delta_{f}(n_l,p^{\gamma})$, for some $\gamma>2$, and if the sample size $n_l$ is bounded by 
		\begin{equation}
			\label{(31)}
			n_l>n_{f}\bigg(min\bigg\{\frac{1}{6d(1+(8/\alpha))\kappa_{\boldsymbol{\eta}_{l}^{0}}}\underset{l_1}{min}\bigg\{min\bigg\{\frac{1}{\kappa_{\boldsymbol{\Sigma}_{l_1}^{0}}},\frac{1}{\kappa_{\boldsymbol{\Sigma}_{l_1}^{0}}^{3}\kappa_{\boldsymbol{\eta}_{l_1}^{0}}}\bigg\}\bigg\},\underset{l}{min}\bigg\{\frac{1}{v_{l0}}\bigg\}\bigg\},p^{\gamma}\bigg),
		\end{equation}
		for all $l=1,2,\ldots,L$, then we have $\mathbb{P}(\cup_{l=1}^{L}\hat{E}_{l}\subseteq E)\geq 1-\frac{1}{p^{\gamma-2}}$.
	\end{lemma}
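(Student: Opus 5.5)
This lemma is essentially the edge-selection result of \cite{10.1214/11-EJS631} (Ravikumar et al.) applied to each population separately, followed by a union bound over the $L$ populations. The plan has four steps.

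\textbf{Step 1: reduce to a single population.} Fix $l$. By the primal-dual witness construction, on the event
\[
\mathcal{A}_l=\{\|\boldsymbol{S}_l-\boldsymbol{\Sigma}_l^0\|_\infty\le\delta\},
\]
the graphical lasso solution of (\ref{(30)}) with $\lambda_1=8\delta/\alpha$ satisfies $\hat E_l\subseteq E$.

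\textbf{Step 2: the witness construction.} Let $\tilde{\boldsymbol{\Omega}}_l$ minimize (\ref{(30)}) restricted to matrices supported on $E$. Define the subgradient $\tilde{\boldsymbol{Z}}_l$ on $E$ from the restricted stationarity conditions, and on $E^c$ from the full zero-gradient equation
\[
\boldsymbol{S}_l-\tilde{\boldsymbol{\Omega}}_l^{-1}+\lambda_1\tilde{\boldsymbol{Z}}_l=0.
\]
Write $\boldsymbol{W}_l=\boldsymbol{S}_l-\boldsymbol{\Sigma}_l^0$ and $\boldsymbol{\Delta}_l=\tilde{\boldsymbol{\Omega}}_l-\boldsymbol{\Omega}_l^0$. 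Expand
\[
\tilde{\boldsymbol{\Omega}}_l^{-1}=\boldsymbol{\Sigma}_l^0-\boldsymbol{\Sigma}_l^0\boldsymbol{\Delta}_l\boldsymbol{\Sigma}_l^0+\boldsymbol{R}(\boldsymbol{\Delta}_l),
\]
and use the block structure of $\boldsymbol{\eta}_l^0$. Together with Assumption \ref{Assumption 5}, this gives strict dual feasibility $\|\tilde{\boldsymbol{Z}}_{l,E^c}\|_\infty<1$, provided that
\[
\max\{\|\boldsymbol{W}_l\|_\infty,\|\boldsymbol{R}(\boldsymbol{\Delta}_l)\|_\infty\}\le\alpha\lambda_1/8.
\]
The first bound holds on $\mathcal{A}_l$ by the choice of $\lambda_1$. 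For the second, I would use the remainder bound
\[
\|\boldsymbol{R}(\boldsymbol{\Delta})\|_\infty\le\tfrac32 d\|\boldsymbol{\Delta}\|_\infty^2\kappa_{\boldsymbol{\Sigma}_l^0}^3,
\qquad\text{valid when } \|\boldsymbol{\Delta}\|_\infty\le 1/(3\kappa_{\boldsymbol{\Sigma}_l^0}d).
\]
I would also use the fixed-point (Brouwer) argument giving
\[
\|\boldsymbol{\Delta}_l\|_\infty\le 2\kappa_{\boldsymbol{\eta}_l^0}(1+8/\alpha)\delta.
\]
Strict convexity of (\ref{(30)}) and uniqueness, which is assumed, then give $\tilde{\boldsymbol{\Omega}}_l=\boldsymbol{\Omega}_l^g$. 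Hence the off-support entries vanish, so $\hat E_l\subseteq E$.

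\textbf{Step 3: check the sample size condition.} Both smallness requirements reduce to
\[
\delta\le\frac{1}{6d(1+8/\alpha)\kappa_{\boldsymbol{\eta}_l^0}}\min\Big\{\frac{1}{\kappa_{\boldsymbol{\Sigma}_l^0}},\frac{1}{\kappa_{\boldsymbol{\Sigma}_l^0}^3\kappa_{\boldsymbol{\eta}_l^0}}\Big\}.
\]
We also need $\delta\le\min_l 1/v_{l0}$ so that the tail condition (\ref{(17)}) applies. Condition (\ref{(31)}), combined with the implication (\ref{(20)}), gives exactly $\delta_f(n_l,p^\gamma)$ below these thresholds for every $l$. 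Taking the minimum over $l_1$ inside (\ref{(31)}) makes the bound hold for the common $\delta=\max_l\delta_l$. This step is the main bookkeeping obstacle: we must check that a single $\lambda_1$ built from the maximal $\delta$ works simultaneously for all $L$ populations, whose $\kappa$'s differ. That is why the threshold uses minima across $l_1$.

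\textbf{Step 4: probability bound.} By (\ref{(17)}) and a union bound over the $p^2$ entries,
\[
\mathbb{P}(\mathcal{A}_l^c)\le p^2/p^\gamma.
\]
On $\cap_l\mathcal{A}_l$ we have $\cup_l\hat E_l\subseteq E$. A union over $L$ gives $1-L/p^{\gamma-2}$. To match the stated $1-1/p^{\gamma-2}$, I would do one of two things. Either follow (\ref{(24)}) as the paper states it, interpreting the bound as holding jointly for the maximal deviation. Or absorb $L$, which is fixed, by slightly enlarging $\gamma$ in $\delta_f$, for instance replacing $p^\gamma$ with $Lp^\gamma$, which does not change the rate in (\ref{(25)}). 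I expect this constant adjustment to be the one delicate point in matching the exact statement.
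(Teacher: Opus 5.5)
Your strategy is the paper's: its proof invokes Theorem~1 of \cite{10.1214/11-EJS631} for each $l$ and then combines over populations. Your Steps~1--2 simply unpack that theorem's primal-dual witness argument. The gap is in the combination step, which does not reach the stated constant.

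Your union bound is correct and gives $\mathbb{P}(\cup_{l}\hat E_l\subseteq E)\ge 1-L/p^{\gamma-2}$. Even with independence across populations, $\mathbb{P}(\cap_l\mathcal{A}_l)$ is only bounded below by $(1-p^{2-\gamma})^L$. So the factor $L$ is intrinsic to this argument, not slack in the union bound.

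Neither of your rescues works:
\begin{itemize}
\item Equation (\ref{(24)}) is a statement for each fixed $l$; it is exactly your bound $\mathbb{P}(\mathcal{A}_l^c)\le p^{2-\gamma}$. It has no joint reading.
\item Replacing $p^{\gamma}$ by $Lp^{\gamma}$ changes $\delta$, $\lambda_1$ and the sample-size condition (\ref{(31)}), so it proves a different lemma.
\end{itemize}
Do not try to match the paper here. It reaches $1-1/p^{\gamma-2}$ only through $\mathbb{P}(\cup_l\hat E_l\subseteq E)\ge\mathbb{P}(\hat E_l\subseteq E)$, and that inequality is backwards: $\{\cup_l\hat E_l\subseteq E\}=\cap_l\{\hat E_l\subseteq E\}$ is contained in each $\{\hat E_l\subseteq E\}$. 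What your argument proves is the bound with $L$ in the numerator, which still tends to one for fixed $L$. State the lemma that way.

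Step~3 contains a second gap. Because $\lambda_1$ is built from the common $\delta=\max_l\delta_f(n_l,p^{\gamma})$, the witness argument for population $l$ needs
$2\kappa_{\boldsymbol{\eta}_l^0}(1+8/\alpha)\delta\le\frac{1}{d}\min\{\frac{1}{3\kappa_{\boldsymbol{\Sigma}_l^0}},\frac{1}{3\kappa^3_{\boldsymbol{\Sigma}_l^0}\kappa_{\boldsymbol{\eta}_l^0}}\}$ for every $l$. But (\ref{(31)}) at index $l$ controls only $\delta_f(n_l,p^{\gamma})$, not $\delta$.

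If the maximum defining $\delta$ is attained at $l^*$, all (\ref{(31)}) tells you is $\delta\le\frac{1}{6d(1+8/\alpha)\kappa_{\boldsymbol{\eta}_{l^*}^0}}\min_{l_1}\{\cdots\}$. The minimum over $l_1$ handles the inner factor, but not the prefactor $1/\kappa_{\boldsymbol{\eta}_{l^*}^0}$. For instance, take every $\kappa_{\boldsymbol{\Sigma}_l^0}=1$, $\kappa_{\boldsymbol{\eta}_{l^*}^0}=1$ and $\kappa_{\boldsymbol{\eta}_l^0}=100$. Then (\ref{(31)}) permits $\delta$ up to $10^{-2}/(6d(1+8/\alpha))$, while population $l$ needs $\delta\le10^{-4}/(6d(1+8/\alpha))$.

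To close this you must do one of two things:
\begin{itemize}
\item put $\max_l\kappa_{\boldsymbol{\eta}_l^0}$ in the prefactor, which is the same as minimizing the whole threshold in (\ref{(31)}) over $l$; or
\item use the population-specific $\lambda_1=\frac{8}{\alpha}\delta_f(n_l,p^{\gamma})$ that Ravikumar et al.'s theorem actually assumes.
\end{itemize}
Citing that theorem with the common $\lambda_1$, as the paper does, has the same defect.
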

	\begin{proof}
		From Theorem 1 (\cite{10.1214/11-EJS631}), we have $\mathbb{P}(\hat{E_l}\subseteq E)\geq 1-\frac{1}{p^{\gamma-2}}$. Then, $\mathbb{P}(\cup_{l=1}^{L}\hat{E}_{l}\subseteq E)\geq \mathbb{P}(\hat{E_l}\subseteq E)\geq 1-\frac{1}{p^{\gamma-2}}\rightarrow 1$.
	\end{proof}
	
	Thus, the edge set $E$ can be identified with high probability before estimation. Conditional on the event that the edge set has been identified, we examine the difference of our estimator from the true precision matrices in the element-wise norm.
	
	Note that the penalty term involved with $\mu$ is non-convex. This raises a question about the convexity of the optimization function $Q(\boldsymbol{\Omega})$ in (\ref{(28)}) and whether the minimizer is unique. However, if we can prove that $Q(\boldsymbol{\Omega})$ is locally convex within an $\epsilon$-neighborhood of the true parameter under some assumptions, then we have a unique solution of (\ref{(27)}). The following theorem establishes that $Q(\boldsymbol{\Omega})$ from (\ref{(27)}) is a strictly convex function around a neighborhood of the true parameter.
	\begin{theorem}
		\label{Theorem 1}
		(Local strict convexity) Let $\{\boldsymbol{\Omega}_{l}^{0}\}$ be the true precision matrices and $\overline{\boldsymbol{\Omega}}^{0}$ be the average of the true precision matrices. Define the supnorm ball around the truth
		\begin{equation}
			\label{((32))}
			\mathcal{B}_{\infty}(\epsilon)=\{\{\boldsymbol{\Omega}_{l}\}:\underset{l}{max}\ ||\boldsymbol{\Omega}_{l}-\boldsymbol{\Omega}_{l}^{0}||_{\infty}\leq \epsilon\}.
		\end{equation}
		Suppose the true edge set $E$ has been identified consistently. Then, under Assumption \ref{Assumption 3}, there exists a constant $K(\boldsymbol{\Omega}^{0},L_1,\epsilon,d)$ such that whenever $0< \mu< K(\boldsymbol{\Omega}^{0},L_1,\epsilon,d)$, the function $Q(\boldsymbol{\Omega})$ is strictly convex in $\mathcal{B}_{\infty}(\epsilon)$, if $\underset{(i,j)\in E}{min}|\overline{\Omega}_{ij}^{0}|>\epsilon$, where,
		$$
		K(\boldsymbol{\Omega}^{0},L_1,\epsilon,d)=\frac{(\underset{(i,j)\in E}{min}|\overline{\Omega}_{ij}^{0}|-\epsilon)^{4}}{16\ \underset{(i,j)}{max}(\underset{l}{max}\ |\Omega_{l,ij}^{0}|+\epsilon)^{2}}\bigg(\frac{1}{L_1}+d\epsilon\bigg)^{-2}.
		$$
	\end{theorem}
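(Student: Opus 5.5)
We would prove strict convexity by decomposing $Q$ into its smooth convex part, its convex $\ell_1$ part, and the nonconvex $\mu$-penalty. We then show that on $\mathcal{B}_{\infty}(\epsilon)$ the Hessian of the smooth part dominates the negative curvature of the $\mu$-penalty. Write $Q=Q_1+Q_2+Q_3$ with
\[
Q_1(\boldsymbol{\Omega})=\sum_{l}\big(-\log\det\boldsymbol{\Omega}_l+tr(\boldsymbol{S}_l\boldsymbol{\Omega}_l)\big),\qquad
Q_2(\boldsymbol{\Omega})=\lambda\Big\|\tfrac1L\textstyle\sum_l\boldsymbol{\Omega}_l\Big\|_1 ,
\]
and $Q_3$ the $\mu$-term. $Q_2$ is convex (a norm composed with a linear map), so it suffices to show that $Q_1+Q_3$ has a positive definite Hessian on the ball.

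Because the edge set $E$ is assumed identified, only coordinates $(i,j)\in E$ are free. On the ball, $|\overline{\Omega}_{ij}|\ge \min_E|\overline{\Omega}^0_{ij}|-\epsilon>0$, so the indicator in $Q_3$ is constant there and $Q_3$ is smooth.

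First we bound the curvature of $Q_1$ from below. Its Hessian in direction $\{\boldsymbol{\Delta}_l\}$ is $\sum_l tr(\boldsymbol{\Omega}_l^{-1}\boldsymbol{\Delta}_l\boldsymbol{\Omega}_l^{-1}\boldsymbol{\Delta}_l)\ge \sum_l\Lambda_{max}(\boldsymbol{\Omega}_l)^{-2}\|\boldsymbol{\Delta}_l\|_F^2$. We control $\Lambda_{max}(\boldsymbol{\Omega}_l)\le \Lambda_{max}(\boldsymbol{\Omega}^0_l)+|||\boldsymbol{\Omega}_l-\boldsymbol{\Omega}_l^0|||_\infty\le 1/L_1+d\epsilon$, using Assumption \ref{Assumption 3} and the fact that the perturbation is supported on $E$, which has at most $d$ entries per row, each at most $\epsilon$. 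This gives the factor $(1/L_1+d\epsilon)^{-2}$ in $K$.

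Second, we compute the Hessian of each summand $h(\omega_l,\bar\omega)=(\omega_l-\bar\omega)^2/\bar\omega^2=(\omega_l/\bar\omega-1)^2$, with $\bar\omega=\frac1L\sum_m\omega_m$, and bound its most negative eigenvalue. The second derivatives involve terms like $2/\bar\omega^2$, $-4\omega_l/\bar\omega^3$ and $6\omega_l^2/\bar\omega^4$, each multiplied by $1/L$ or $1/L^2$ factors through $\partial\bar\omega/\partial\omega_m$. Grouping the terms, we expect a bound of the form
\[
\nabla^2 Q_3\succeq -16\mu\,\frac{\max_{(i,j)}(\max_l|\Omega^0_{l,ij}|+\epsilon)^2}{(\min_E|\overline{\Omega}^0_{ij}|-\epsilon)^4}\, I .
\]
This uses $|\omega_l|\le\max_l|\Omega^0_{l,ij}|+\epsilon$ and $|\bar\omega|\ge\min_E|\overline\Omega^0_{ij}|-\epsilon$ throughout the ball, and uses Cauchy–Schwarz to absorb the cross terms $\sum_l\Delta_l\,\sum_m\Delta_m/L$ into $\sum_l\Delta_l^2$. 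Because $Q_3$ is separable over $(i,j)$, the bound is coordinatewise in $(i,j)$ and then sums over $(i,j)$.

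Finally, we combine the two bounds. The total Hessian is at least $\big[(1/L_1+d\epsilon)^{-2}-\mu\cdot 16\max(\cdot)^2/(\min-\epsilon)^4\big]\sum_l\|\boldsymbol{\Delta}_l\|_F^2$, which is strictly positive exactly when $\mu<K(\boldsymbol{\Omega}^0,L_1,\epsilon,d)$. Strict convexity of $Q_1+Q_3$ on the convex set $\mathcal{B}_\infty(\epsilon)$, plus convexity of $Q_2$, then gives strict convexity of $Q$.

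The main obstacle is the second step: the Hessian of $Q_3$ mixes all $L$ populations through $\bar\omega$, and we must track the constants carefully enough to get a uniform lower bound with the constant $16$. We would first verify on the two-variable reduction, restricting along the line $\omega_l+t\Delta_l$, that $\frac{d^2}{dt^2}(\omega_l/\bar\omega)^2$ is bounded by $16\,\omega_{\max}^2/\bar\omega_{\min}^4$ times $\sum_l\Delta_l^2$ (crudely, the $\frac{d^2}{dt^2}$ of $r^2$ where $r=\omega_l/\bar\omega$), and then sum over $l$.
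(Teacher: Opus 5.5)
Your proposal follows the paper's proof essentially step for step. The paper likewise (i) lower-bounds the log-determinant curvature by $(1/L_1+d\epsilon)^{-2}\sum_l\|\boldsymbol{U}_l\|_F^2$ via Weyl and Ger\v{s}gorin, (ii) bounds the entrywise second derivative of the $\mu$-penalty by $16\mu(\max_l|\Omega^0_{l,ij}|+\epsilon)^2/(\min_E|\overline{\Omega}^0_{ij}|-\epsilon)^4\,\|\boldsymbol{u}_{ij}\|_2^2$ using Cauchy--Schwarz, and (iii) compares the two; its per-entry penalty $\mu L^2\,\boldsymbol{m}_{ij}^T\boldsymbol{m}_{ij}/(\boldsymbol{1}^T\boldsymbol{m}_{ij})^2-\mu L$ is exactly $\mu\sum_l(\omega_l/\bar\omega)^2-\mu L$ in your notation. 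One caution on the step you flag: the constant $16=2+8+6$ comes out only if you first sum the per-$l$ terms $2\Delta_l^2/\bar\omega^2-8\omega_l\Delta_l\bar\Delta/\bar\omega^3+6\omega_l^2\bar\Delta^2/\bar\omega^4$ over $l$ and then use $\sum_l|\Delta_l|\,|\bar\Delta|\le\sum_l\Delta_l^2$ and $L\bar\Delta^2\le\sum_l\Delta_l^2$; bounding each $l$-summand by $16(\cdot)\sum_m\Delta_m^2$ and then summing, as your last sentence literally says, would cost a factor of $L$ and give only $K/L$.
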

	\begin{Remark}
		The neighborhood $\mathcal{B}_{\infty}(\epsilon)$ considered in Theorem \ref{Theorem 1} is defined in terms of the element-wise supnorm around the true precision matrices. Since each row of $\boldsymbol{\Omega}_{l}^{0}$ contains at most $d$ non-zero elements, the sum of absolute deviations in any row is bounded by $d\epsilon$. By the Ger\v{s}gorin circle theorem, this implies $\underset{i}{max}|\Lambda_{i}(\boldsymbol{\Omega}_{l}-\boldsymbol{\Omega}_{l}^{0})|\leq d\epsilon$, for any $\{\boldsymbol{\Omega}_{l}\}$ in $\mathcal{B}_{\infty}(\epsilon)$. In particular, choosing $\epsilon=O(1/d)$ ensures that the perturbation $d\epsilon$ remains bounded within the neighborhood. This choice is sufficient for establishing the consistency properties of the proposed estimator in the subsequent analysis. 
	\end{Remark}
	\subsection{Main result}\label{Section 3.4}
	We have proved that, conditioned on the event that $E$ has been identified consistently, $Q(\boldsymbol{\Omega})$ is a strictly convex function in $\mathcal{B}_{\infty}(\epsilon)$, and, hence, has a unique minimizer in it. We now need to show that if we restrict ourselves to  $\mathcal{B}_{\infty}(\epsilon)$ and obtain the estimator by solving (\ref{(27)}), we can derive an upper bound for the element-wise norm of the difference between the minimizer and the true precision matrices. Our estimator in $\mathcal{B}_{\infty}(\epsilon)$ is obtained after optimizing 
	\begin{equation}\label{(46)}
		\{\hat{\boldsymbol{\Omega}}_l,l=1,2,\ldots,L\}=\underset{\{\boldsymbol{\Omega}_{l},||\boldsymbol{\Omega}_{l}-\boldsymbol{\Omega}_{l}^{0}||_{\infty}\leq \epsilon,l=1,2,\ldots,L\}}{arg\ min} Q(\boldsymbol{\Omega}).
	\end{equation}
	Consider the following assumption under which the theoretical analysis is carried out.
	
	\begin{assumption}\label{Assumption 6}
		(Well-behaved precision matrices) The precision matrix $\boldsymbol{\Omega}_{l}^{0}$ for population $l$ satisfies the following restrictions:
		\begin{enumerate}
			\item \label{Assumption 6.1} 
			Define: 
			$$
			C_1= \underset{l_1}{min}\left\{ min\left\{\frac{1}{3\kappa_{\boldsymbol{\Sigma}_{l_1}^{0}}},\frac{1}{3\kappa^{3}_{\boldsymbol{\Sigma}_{l_1}^{0}}\kappa_{\boldsymbol{\eta}_{l_1}^{0}}}\right\}\right\}.
			$$ Then, for all $(i,j)\in E$, we have
			\begin{equation}
				\label{(47)}
				|\overline{\Omega}_{ij}^{0}|>\frac{C_1}{d}+\bigg(\frac{C_1}{d}\bigg)^{1/4},
			\end{equation}
			\item \label{Assumption 6.2} Also, for some positive bounded constant $c$,
			\begin{equation}
				\underset{l}{max}\ \underset{(i,j)\in E}{max}\frac{|\Omega_{l,ij}^{0}-\overline{\Omega}_{ij}^{0}|}{|\overline{\Omega}_{ij}^{0}|}\leq \frac{-1+\sqrt{1+2c\delta^{1/4}}}{2},
				\label{(48)}
			\end{equation}
			i.e., for small $\delta$,
			\begin{equation*}
				\underset{l}{max}\ \underset{(i,j)\in E}{max}\frac{|\Omega_{l,ij}^{0}-\overline{\Omega}_{ij}^{0}|}{|\overline{\Omega}_{ij}^{0}|}=O(\delta^{1/4}).
			\end{equation*}
		\end{enumerate}
	\end{assumption}
	The following theorem bounds the maximum element-wise difference between the estimated matrices obtained within  $\mathcal{B}_{\infty}(\epsilon)$ and the true precision matrices jointly across all populations.
	
	\begin{theorem}
		\label{Theorem 2}
		Let $\boldsymbol{x}_{il}\in \mathbb{R}^{p}$ denote independent and identically distributed observations from $\boldsymbol{x}_{l}$, such that $\mathbb{E}(\boldsymbol{x}_{l})=\boldsymbol{0}$ and $cov(\boldsymbol{x}_{l})=\boldsymbol{\Sigma}_{l}^{0},l=1,2,\ldots,L$. Assume that the underlying distributions satisfy the irrepresentability conditions (Assumption \ref{Assumption 5}) for an $\alpha\in (0,1]$ and the tail conditions (\ref{(17)}) with $\mathcal{T}(f,v_{l0})$ for all $l=1,2,\ldots, L$. Furthermore, suppose Assumptions \ref{Assumption 3} and \ref{Assumption 6} hold for the precision matrices $\boldsymbol{\Omega}_{l}^{0}$. Let the penalty parameters $\lambda$ and $\mu$ satisfy the restrictions that $\lambda>0$,
		\begin{equation}
			\label{(49)}
			0< \mu< min\Bigg\{\frac{\alpha\lambda}{(2-\alpha)L},\frac{\bigg(\frac{1}{L_1}+C_1\bigg)^{-2}\frac{C_1}{d}}{16\ \underset{(i,j)}{max}(\underset{l}{max}\ |\Omega_{l,ij}^{0}|+\frac{C_1}{d})^{2}}\Bigg\},
		\end{equation}
		and $(\mu+(\lambda/L))=\frac{8}{\alpha}\delta$, where $\delta=\underset{l}{max}\ \delta_{f}(n_l,p^{\gamma})$ with $\delta=o(1)$, for some $\gamma>2$ and the sample size $n_l$ satisfies (\ref{(31)}). In addition, suppose the true edge set $E$ has been identified consistently. Then, in $\mathcal{B}_{\infty}(C_1/d)$, there exists a unique local minimizer to the optimization problem (\ref{(27)}) such that, for finite $L$, we have,
		\begin{enumerate}
			\item $||\hat{\boldsymbol{\Omega}}_{l}-\boldsymbol{\Omega}_{l}^{0}||_{\infty}\leq 2\kappa_{\boldsymbol{\eta}_{l}^{0}}\bigg(1+\frac{8}{\alpha}\bigg)\delta$, for all $l=1,2,\ldots,L$ , with probability at least $1-\frac{L}{p^{\gamma-2}}\rightarrow 1$,
			\item $E(\hat{\boldsymbol{\Omega}}_{l})\subset E$ including all edges $(i,j)$ such that $|\Omega_{l,ij}^{0}|>2\kappa_{\boldsymbol{\eta}_{l}^{0}}\bigg(1+\frac{8}{\alpha}\bigg)\delta$, for all $l=1,2,\ldots,L$, with probability $1$.
		\end{enumerate}
	\end{theorem}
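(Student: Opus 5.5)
The proof will follow the primal--dual witness construction of \cite{10.1214/11-EJS631}, adapted to the penalty in (\ref{(28)}), and carried out on the intersection of the events $\{||\boldsymbol{S}_l-\boldsymbol{\Sigma}_l^0||_\infty\le\delta\}$, $l=1,\ldots,L$. By (\ref{(24)}) and a union bound, this intersection has probability at least $1-L/p^{\gamma-2}$. We also condition on the event, supplied by Lemma \ref{Lemma 0.1.1}, that $E$ has been identified.

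\textbf{Step 1: restricted problem and strict convexity.} Set $\epsilon=C_1/d$. Assumption \ref{Assumption 6}.1 gives $\min_E|\overline{\Omega}^0_{ij}|-\epsilon>(C_1/d)^{1/4}$, so $(\min_E|\overline{\Omega}^0_{ij}|-\epsilon)^4> C_1/d$. Also $\frac1{L_1}+d\epsilon=\frac1{L_1}+C_1$. Hence the second bound in (\ref{(49)}) is at most $K(\boldsymbol{\Omega}^0,L_1,C_1/d,d)$, and Theorem \ref{Theorem 1} makes $Q$ strictly convex on $\mathcal{B}_\infty(C_1/d)$. Restricting to matrices supported on $E$ (with diagonals), we define $\tilde{\boldsymbol{\Omega}}_l$ as the unique minimizer of $Q$ over this restricted ball.

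\textbf{Step 2: stationarity and error equation.} Write $\boldsymbol{\Delta}_l=\tilde{\boldsymbol{\Omega}}_l-\boldsymbol{\Omega}_l^0$. The KKT conditions on $E$ read
\[
-\tilde{\boldsymbol{\Omega}}_l^{-1}+\boldsymbol{S}_l+\frac{\lambda}{L}\tilde{\boldsymbol{Z}}+\mu\boldsymbol{G}_l=0,
\]
where $\tilde{\boldsymbol{Z}}\in\partial||\overline{\boldsymbol{\Omega}}||_1$ and $\boldsymbol{G}_l$ is the gradient of the ratio penalty. Expanding $\tilde{\boldsymbol{\Omega}}_l^{-1}=\boldsymbol{\Sigma}_l^0-\boldsymbol{\Sigma}_l^0\boldsymbol{\Delta}_l\boldsymbol{\Sigma}_l^0+\boldsymbol{R}(\boldsymbol{\Delta}_l)$ gives
\[
\boldsymbol{\eta}^0_{lEE}\,\mathrm{vec}(\boldsymbol{\Delta}_l)_E=-\big[\boldsymbol{W}_l-\boldsymbol{R}(\boldsymbol{\Delta}_l)+\tfrac{\lambda}{L}\tilde{\boldsymbol{Z}}+\mu\boldsymbol{G}_l\big]_E,\qquad \boldsymbol{W}_l=\boldsymbol{S}_l-\boldsymbol{\Sigma}_l^0 .
\]
We bound $||\boldsymbol{G}_l||_\infty$ by a constant times $\max_E|\Omega_{l,ij}-\overline{\Omega}_{ij}|/|\overline{\Omega}_{ij}|$, plus a cross term coming from differentiating $\overline{\Omega}_{ij}$ in the denominator. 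Assumption \ref{Assumption 6}.2, together with $|\Delta|\le\epsilon$ and the lower bound on $|\overline{\Omega}^0_{ij}|$, should make this at most $1$. The constant $(-1+\sqrt{1+2c\delta^{1/4}})/2$ looks designed so that $r+r^2$-type expressions are bounded by $c\delta^{1/4}/2$. The first constraint $\mu<\alpha\lambda/((2-\alpha)L)$ is reserved for Step 4. Then $||(\lambda/L)\tilde{\boldsymbol{Z}}+\mu \boldsymbol{G}_l||_\infty\le \lambda/L+\mu=\frac8\alpha\delta$.

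\textbf{Step 3: fixed-point bound.} As in Lemma 6 of \cite{10.1214/11-EJS631}, we define the map
\[
F(\boldsymbol{\Delta})=-(\boldsymbol{\eta}^0_{lEE})^{-1}\big[\boldsymbol{W}_l-\boldsymbol{R}(\boldsymbol{\Delta})+(\text{penalty terms})\big]_E
\]
and show that it maps the ball of radius $r=2\kappa_{\boldsymbol{\eta}_l^0}(1+8/\alpha)\delta$ into itself. This uses $||\boldsymbol{R}(\boldsymbol{\Delta})||_\infty\le\frac32 d||\boldsymbol{\Delta}||_\infty^2\kappa_{\boldsymbol{\Sigma}_l^0}^3$ whenever $||\boldsymbol{\Delta}||_\infty\le 1/(3\kappa_{\boldsymbol{\Sigma}_l^0}d)$, which follows from the sample size condition (\ref{(31)}). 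Brouwer's theorem then gives $||\boldsymbol{\Delta}_l||_\infty\le r$. Since $r\le C_1/d$ by (\ref{(31)}), the solution is interior to $\mathcal{B}_\infty(C_1/d)$.

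\textbf{Step 4: strict dual feasibility on $E^c$ (main obstacle).} We must show that the dual variable on $E^c$ has sup-norm strictly below $1$. Only $\lambda$ acts there, because the $\mu$-term vanishes off $E$. The averaged stationarity equation gives
\[
\frac{\lambda}{L}\tilde{\boldsymbol{Z}}_{E^c}=-[\boldsymbol{W}_l-\boldsymbol{R}]_{E^c}+\boldsymbol{\eta}^0_{lE^cE}(\boldsymbol{\eta}^0_{lEE})^{-1}\big[\boldsymbol{W}_l-\boldsymbol{R}+\tfrac{\lambda}{L}\tilde{\boldsymbol{Z}}+\mu \boldsymbol{G}_l\big]_E .
\]
Applying Assumption \ref{Assumption 5}, we get
\[
\frac{\lambda}{L}||\tilde{\boldsymbol{Z}}_{E^c}||_\infty\le(2-\alpha)(\delta+||\boldsymbol{R}||_\infty)+(1-\alpha)\Big(\frac{\lambda}{L}+\mu\Big).
\]
The difficulty is that the $\mu$ contribution is not absorbed by the usual $\alpha/8$ slack. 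This is where $\mu<\alpha\lambda/((2-\alpha)L)$ enters: it bounds $(1-\alpha)\mu$ relative to $\alpha\lambda/L$. Combined with $||\boldsymbol{R}||_\infty\le\delta$ and $\lambda/L+\mu=8\delta/\alpha$, this should yield $||\tilde{\boldsymbol{Z}}_{E^c}||_\infty<1$. It follows that $\tilde{\boldsymbol{\Omega}}_l$ solves the full local problem, and it is the unique minimizer by Step 1. This proves part 1.

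\textbf{Part 2.} The support satisfies $E(\hat{\boldsymbol{\Omega}}_l)\subseteq E$ by construction. Any $(i,j)$ with $|\Omega^0_{l,ij}|>r$ has $\hat{\Omega}_{l,ij}\ne 0$ by the sup-norm bound. Both statements hold on the good event of Steps 1--4.
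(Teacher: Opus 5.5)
Your proposal is essentially the paper's own proof. It uses the same primal--dual witness construction: the restricted primal problem (\ref{(54)}) on $E$ inside $\mathcal{B}_{\infty}(C_1/d)$, the Brouwer fixed-point bound of Lemma~\ref{Lemma 0.4} (with the remainder bound of Lemma~\ref{Lemma 0.3} and $\|\tilde{\boldsymbol{M}}_{l(E)}\|_{\infty}\le 1$ obtained from Assumption~6.2), strict dual feasibility from (\ref{(61)}) with $\mu<\alpha\lambda/((2-\alpha)L)$ and $\max\{\|\boldsymbol{W}_l\|_{\infty},\|\boldsymbol{R}_l\|_{\infty}\}\le\delta$, and a union bound over $l$. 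The differences are cosmetic: you explicitly check that the second bound in (\ref{(49)}) together with Assumption~6.1 gives $\mu<K(\boldsymbol{\Omega}^{0},L_1,C_1/d,d)$, which the paper uses implicitly, and you run the fixed-point argument with the deterministic radius $2\kappa_{\boldsymbol{\eta}_{l}^{0}}(1+8/\alpha)\delta$ instead of the random $r_l$.
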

	The validity of the above theorem requires consistent identification of the true edge set $E$ prior to the estimation step. Lemma \ref{Lemma 0.1.1} guarantees consistent recovery of $E$ since the initial values are obtained from the graphical lasso. Therefore, combining Theorem \ref{Theorem 2} with Lemma \ref{Lemma 0.1.1}, we arrive at the next theorem. 
	\begin{theorem}
		Suppose that the assumptions of Theorem \ref{Theorem 2} hold and that the true edge set $E$ is consistently recovered with probability at least $1-\frac{1}{p^{\gamma-2}}\rightarrow 1$. Then, for any fixed $L$, the following statements hold unconditionally:
		\begin{enumerate}
			\item For all $l=1,2,\ldots,L$, $$||\hat{\boldsymbol{\Omega}}_{l}-\boldsymbol{\Omega}_{l}^{0}||_{\infty}\leq 2\kappa_{\boldsymbol{\eta}_{l}^{0}}\bigg(1+\frac{8}{\alpha}\bigg)\delta,$$with probability at least $1-\frac{L+1}{p^{\gamma-2}}\rightarrow 1$,
			\item For all $l=1,2,\ldots,L$, $E(\hat{\boldsymbol{\Omega}}_{l})\subset E$, with probability at least $1-\frac{1}{p^{\gamma-2}}\rightarrow 1$.
		\end{enumerate}
	\end{theorem}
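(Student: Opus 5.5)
The plan is to remove the conditioning on edge-set recovery in Theorem \ref{Theorem 2} by combining it with Lemma \ref{Lemma 0.1.1} through a union bound. Let $\mathcal{A}$ be the event that the true edge set $E$ has been identified consistently. Here this means that the graphical lasso initializers of Remark \ref{Remark 3} satisfy $\cup_{l}\hat{E}_{l}\subseteq E$, together with whatever recovery property is used in the proof of Theorem \ref{Theorem 2}. By the hypothesis of the present theorem, and by Lemma \ref{Lemma 0.1.1} under the sample size condition (\ref{(31)}) and $\lambda_1=\frac{8}{\alpha}\delta$, we have $\mathbb{P}(\mathcal{A}^{c})\leq p^{-(\gamma-2)}$.

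Next, let $\mathcal{C}_l=\{\|\boldsymbol{S}_l-\boldsymbol{\Sigma}_l^0\|_\infty\leq\delta\}$. By (\ref{(24)}), $\mathbb{P}(\mathcal{C}_l^c)\leq p^{-(\gamma-2)}$. I will reread the proof of Theorem \ref{Theorem 2} as a deterministic statement: on $\mathcal{A}\cap\bigcap_{l}\mathcal{C}_l$, the restricted problem (\ref{(46)}) on $\mathcal{B}_\infty(C_1/d)$ has a unique minimizer, and it satisfies $\|\hat{\boldsymbol{\Omega}}_l-\boldsymbol{\Omega}_l^0\|_\infty\leq 2\kappa_{\boldsymbol{\eta}_l^0}(1+8/\alpha)\delta$ for every $l$. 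This is exactly how its probability statement $1-L/p^{\gamma-2}$ arises, namely from the $L$ events $\mathcal{C}_l$.

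\textbf{Part 1.} The union bound gives
$$\mathbb{P}\Big(\mathcal{A}^c\cup\bigcup_l\mathcal{C}_l^c\Big)\leq \frac{1}{p^{\gamma-2}}+\frac{L}{p^{\gamma-2}},$$
which is the claimed $1-\frac{L+1}{p^{\gamma-2}}$. Since $L$ is fixed and $\gamma>2$, this probability tends to $1$.

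\textbf{Part 2.} The support inclusion $E(\hat{\boldsymbol{\Omega}}_l)\subset E$ should follow on $\mathcal{A}$ alone. The algorithm only updates coordinates in the support of the initial $\boldsymbol{\Theta}_0$. By Assumption \ref{Assumption 1} and the construction of $\boldsymbol{\Gamma}_{l0}$, any coordinate with $\Theta_{0,ij}=0$ has $\Gamma_{l,ij}=0$, and hence $\hat{\Omega}_{l,ij}=0$. Thus on $\mathcal{A}$ the estimator's support sits inside $\cup_l\hat{E}_l\subseteq E$. Part 2 of Theorem \ref{Theorem 2} holds with conditional probability one given $\mathcal{A}$, so the unconditional probability is at least $\mathbb{P}(\mathcal{A})\geq1-p^{-(\gamma-2)}$.

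The main obstacle I expect is justifying that the conclusion of Theorem \ref{Theorem 2} is genuinely a deterministic consequence of $\mathcal{A}\cap\bigcap_l\mathcal{C}_l$, rather than a conditional probability statement. That theorem is phrased "conditional on identification". If its proof instead used conditional probabilities given $\mathcal{A}$, I would rewrite the bound as $\mathbb{P}(B)\geq \mathbb{P}(B\cap\mathcal{A})\geq \mathbb{P}(\mathcal{A})-\mathbb{P}(B^c\cap\mathcal{A})$. I would then bound $\mathbb{P}(B^c\cap\mathcal{A})\leq\sum_l\mathbb{P}(\mathcal{C}_l^c)$, which avoids any issue with the dependence between $\mathcal{A}$ and the $\mathcal{C}_l$, since both are built from the same $\boldsymbol{S}_l$.
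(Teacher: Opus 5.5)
Your argument is correct and is essentially the paper's proof, which says only that the result follows from the law of total probability applied to Theorem \ref{Theorem 2} and the edge-recovery probability. Your union bound over $\mathcal{A}^c$ and the $\mathcal{C}_l^c$ gives the same $1-\frac{L+1}{p^{\gamma-2}}$, and it is slightly cleaner: it never needs the bound $1-\frac{L}{p^{\gamma-2}}$ of Theorem \ref{Theorem 2} to hold conditionally on $\mathcal{A}$, which that proof only establishes unconditionally, since $\mathcal{A}$ and the $\mathcal{C}_l$ depend on the same $\boldsymbol{S}_l$. Part 2 is likewise the paper's conditional-probability-one step. Your support-preservation remark, however, concerns the iterates of Algorithm 1 rather than the restricted minimizer in (\ref{(46)}), so the step that actually carries Part 2 is your invocation of part 2 of Theorem \ref{Theorem 2}.
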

	The proof follows directly from the law of total probability. Hence, with high probability, the local unique solution, $\hat{\boldsymbol{\Omega}}_{l}$ satisfies the restrictions:
	\begin{enumerate}
		\item If $\hat{\boldsymbol{\Theta}}=\hat{\overline{\boldsymbol{\Omega}}}=\frac{1}{L}\sum_{l=1}^L\hat{\boldsymbol{\Omega}}_{l}$, then $\hat{\Theta}_{ij}=0$ implies $\hat{\Omega}_{l,ij}=0$, for all $l=1,2,\ldots, L$,
		\item If $\hat{\Gamma}_{l,ij}=\frac{\hat{\Omega}_{l,ij}}{\hat{\Theta}_{ij}}$, when $\hat{\Theta}_{ij}\neq 0$, then $\hat{\overline{\Gamma}}_{ij}=1$.
	\end{enumerate}
	Thus, we can conclude that the local solutions of the precision matrices from solving (\ref{(27)}) are equal to a solution of (\ref{(1)}) with high probability if we use the graphical lasso estimator for the initial estimates of the parameters.

	
	\subsubsection{Sub-Gaussian distributions}
	We now discuss the consequences of Theorem \ref{Theorem 1} for distributions satisfying sub-Gaussian tail bounds (Definition \ref{Definition 3}).
	\begin{corollary}
		\label{Corollary 1.1}
		Suppose the samples satisfy the sub-Gaussian condition (Definition \ref{Definition 3}) with a common parameter $K_1>0$. Under the same conditions as Theorem \ref{Theorem 1}, if the sample sizes satisfy the condition
		\begin{equation}
			\label{(84)}
			n_l>C_{1l}d^{2}\bigg(1+\frac{8}{\alpha}\bigg)^{2}\ \log(4p^{\gamma}),
		\end{equation}
		where $C_{1l}=(48\sqrt{2}\boldsymbol{\kappa}_{\eta_{l}^{0}}(1+12K_1^2)\ max(\Sigma_{l,ii}^{0})\ \underset{l_1}{max}\ max\{\boldsymbol{\kappa}_{\Sigma_{l_1}^{0}},\boldsymbol{\kappa}_{\Sigma_{l_1}^{0}}^{3}\boldsymbol{\kappa}_{\eta_{l_1}^{0}}\})^{2}$, and if $\frac{\log(p)}{n}=o(1)$, then with probability at least $1-\frac{L}{p^{\gamma-2}}$,
		\begin{equation}
			\label{(85)}
			||\hat{\boldsymbol{\Omega}}_{l}-\boldsymbol{\Omega}_{l}^{0}||_{\infty}\leq 16\sqrt{2}\boldsymbol{\kappa}_{\eta_{l}^{0}}\bigg(1+\frac{8}{\alpha}\bigg)\underset{l}{max}(1+12K_1^2)\ max(\Sigma_{l,ii}^{0})\sqrt{\frac{\log(4p^{\gamma})}{n_l}},
		\end{equation}
		for $l=1,2,\ldots,L$.
	\end{corollary}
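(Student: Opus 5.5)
The corollary should follow by specializing Theorem \ref{Theorem 2} (the statement says ``Theorem \ref{Theorem 1}'', but the bound is the one from Theorem \ref{Theorem 2}) to the sub-Gaussian tail function supplied by Lemma \ref{Lemma 0.1}. No new probabilistic argument is needed: we only check that the generic hypotheses of Theorem \ref{Theorem 2} hold under (\ref{(84)}), and then substitute the explicit inverse maps (\ref{(22)}) and (\ref{(23)}) into its conclusion.

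\textbf{Step 1: the tail condition.} By Lemma \ref{Lemma 0.1}, the samples satisfy $\mathcal{T}(f,v_{l0})$ with
\[
f(n_l,\delta)=\tfrac14\exp\Bigl(\tfrac{n_l\delta^2}{128(1+12K_1^2)^2\max(\Sigma^0_{l,ii})^2}\Bigr),
\qquad
1/v_{l0}=8(1+12K_1^2)\max_i\Sigma^0_{l,ii}.
\]
Hence $\delta_f$ and $n_f$ are given by (\ref{(22)}) and (\ref{(23)}) with $\Sigma^0_l$ in place of $\Sigma^0$.

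\textbf{Step 2: the sample-size condition.} We verify that (\ref{(84)}) implies (\ref{(31)}). Write $m=\min_{l_1}\min\{1/\kappa_{\Sigma^0_{l_1}},\,1/(\kappa^3_{\Sigma^0_{l_1}}\kappa_{\eta^0_{l_1}})\}$. The first argument of $n_f$ in (\ref{(31)}) is $\delta^\ast=m/(6d(1+8/\alpha)\kappa_{\eta^0_l})$. Substituting into (\ref{(23)}) gives
\[
n_f(\delta^\ast,p^\gamma)=128(1+12K_1^2)^2\max(\Sigma^0_{l,ii})^2\cdot 36\,d^2(1+8/\alpha)^2\kappa_{\eta^0_l}^2\,m^{-2}\log(4p^\gamma).
\]
Since $128\cdot 36=(48\sqrt2)^2$ and $m^{-1}=\max_{l_1}\max\{\kappa_{\Sigma^0_{l_1}},\kappa^3_{\Sigma^0_{l_1}}\kappa_{\eta^0_{l_1}}\}$, this is exactly $C_{1l}d^2(1+8/\alpha)^2\log(4p^\gamma)$. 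Because $n_f$ is decreasing in $\delta$, the inner $\min\{\cdot,\min_l 1/v_{l0}\}$ could in principle require a larger threshold. So I would check that $\delta^\ast\le 1/v_{l0}$, or otherwise note that the admissible range of $\delta$ in Lemma \ref{Lemma 0.1} is automatically satisfied when $\delta=o(1)$. I expect this to be the only real obstacle: it is a bookkeeping issue between the $\min$ over $1/v_{l0}$ and the constant $C_{1l}$. I would handle it by observing that $\kappa_{\Sigma^0}\ge\max_i\Sigma^0_{ii}$ and $d\ge1$, so $\delta^\ast$ is far below $8(1+12K_1^2)\max\Sigma^0_{ii}$.

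\textbf{Step 3: the rate.} The condition $\log p/n=o(1)$ together with (\ref{(25)}) gives $\delta=o(1)$, as Theorem \ref{Theorem 2} requires. We choose $\lambda,\mu$ with $\mu+\lambda/L=8\delta/\alpha$, subject to (\ref{(49)}). Theorem \ref{Theorem 2} then yields, with probability at least $1-L/p^{\gamma-2}$,
\[
\|\hat{\boldsymbol\Omega}_l-\boldsymbol\Omega^0_l\|_\infty\le 2\kappa_{\eta^0_l}(1+8/\alpha)\delta .
\]
Inserting $\delta=\max_l 8\sqrt2(1+12K_1^2)\max(\Sigma^0_{l,ii})\sqrt{\log(4p^\gamma)/n_l}$ gives the constant $16\sqrt2$ and the expression in (\ref{(85)}). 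The $\max_l$ is placed as in the statement, and by Assumption \ref{Assumption 4} one can interchange $n_l$ and $\min_l n_l$ up to constants.
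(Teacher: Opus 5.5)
Your route is the one the paper intends. The paper gives no separate proof: the corollary is Theorem~\ref{Theorem 2} with the sub-Gaussian inverse maps (\ref{(22)})--(\ref{(23)}) substituted in, and you are right that the reference to Theorem~\ref{Theorem 1} should be to Theorem~\ref{Theorem 2}. Your check $128\cdot 36=(48\sqrt2)^2$ correctly identifies (\ref{(84)}) with $n_l>n_f(\delta_l^\ast,p^\gamma)$, and $2\cdot 8\sqrt2=16\sqrt2$ gives (\ref{(85)}).

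The one step whose justification does not work as written is $\delta_l^\ast\le\min_{l_1}1/v_{l_10}$.

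\begin{itemize}
\item The facts $\kappa_{\Sigma^0}\ge\max_i\Sigma^0_{ii}$ and $d\ge1$ are not enough, because $\delta_l^\ast$ also carries the factor $1/\kappa_{\eta_l^0}$. You would additionally need a bound such as $\kappa_{\eta_l^0}\ge 1/\max_i(\Sigma^0_{l,ii})^2$. This follows from $(A^{-1})_{kk}\ge 1/A_{kk}$ for the positive definite matrix $\eta^0_{lEE}$, whose diagonal entries are $\Sigma^0_{l,ii}\Sigma^0_{l,jj}$.
\item Even with that bound you only get $\delta_l^\ast\le 1/v_{l0}$ for the same population $l$. But $\min_{l_1}1/v_{l_10}$ is set by the population with the smallest variances. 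For example, take $d=1$, $\boldsymbol{\Sigma}^0_1=a\boldsymbol{I}$, $\boldsymbol{\Sigma}^0_2=b\boldsymbol{I}$ with $a>b$. Then $\delta_1^\ast=a/(6(1+8/\alpha))$, which exceeds $8(1+12K_1^2)b$ once $a/b$ is large.
\end{itemize}

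So (\ref{(84)}) by itself does not deliver that part of (\ref{(31)}). Use your fallback instead. Under Assumption~\ref{Assumption 3}, $L_1<\Sigma^0_{l,ii}<1/L_1$, so $\delta=o(1)$ while $\min_{l_1}1/v_{l_10}\ge 8(1+12K_1^2)L_1$. Hence $\delta_f(n_l,p^\gamma)\le\min_{l_1}1/v_{l_10}$ for all large $n$. That is the only role this part of (\ref{(31)}) plays in the proof of Theorem~\ref{Theorem 2}: it makes Lemma~\ref{Lemma 0.2} applicable.

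On the constant: to get $16\sqrt2$ exactly, read the $\max_l$ in (\ref{(85)}) as acting on the whole product, including $\sqrt{\log(4p^\gamma)/n_l}$. The right side is then literally $2\kappa_{\eta_l^0}(1+8/\alpha)\delta$. Trading $n_l$ for $\min_l n_l$ ``up to constants'' is unnecessary and would not preserve the stated constant.
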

	If $\boldsymbol{\kappa}_{\eta_{l}^{0}},\boldsymbol{\kappa}_{\Sigma_{l}^{0}}$ and $\alpha$ remain constant as a function of $n,p$ and $d$, for all $l=1,2,\ldots,L$, then from Corollary \ref{Corollary 1.1}, we have $||\hat{\boldsymbol{\Omega}}_{l}-\boldsymbol{\Omega}_{l}^{0}||_{\infty}=O_{p}(\sqrt{\log(p)/n})$, by taking sample sizes $n_l$ such that $n_l=\phi(d^2\ \log(4p^{\gamma})$ and $\delta=O(\sqrt{\log(p)/n})$, since $\delta_{f}(n_l,p^{\gamma})=O(\sqrt{\log(p)/n})$.
	\begin{Remark}
		Since $\delta=O(\sqrt{\frac{\log(p)}{n}})$, Assumption 6.\ref{Assumption 6.2} gives us
		\begin{equation}
			\label{(86)}
			\underset{l}{max}\ \underset{(i,j)\in E}{max}\frac{|\Omega_{l,ij}^{0}-\overline{\Omega}_{ij}^{0}|}{|\overline{\Omega}_{ij}^{0}|}=O\bigg(\bigg(\frac{\log(p)}{n}\bigg)^{1/8}\bigg),       
		\end{equation}
		if $\frac{\log(p)}{n}=o(1)$. Moreover, from the restriction of the sample size $n_l$ in (\ref{(31)}) and interpretation of the inverse function, we also have
		\begin{eqnarray}
			\notag 
			\delta_{f}(n_l,p^{\gamma})&\leq& \frac{1}{6d\bigg(1+\frac{8}{\alpha}\bigg)\boldsymbol{\kappa}_{\eta_{l}^{0}}}C_1\text{, for all }l=1,2,\ldots,L\\
			\label{(87)}
			\implies \frac{6}{C_1} \bigg(1+\frac{8}{\alpha}\bigg)\delta & \leq & \frac{1}{\boldsymbol{\kappa}_{\eta_{l}^{0}}d}
		\end{eqnarray}
		Hence, when $\boldsymbol{\kappa}_{\eta_{l}^{0}},\boldsymbol{\kappa}_{\Sigma_{l}^{0}}$ and $\alpha$ remain constant as a function of $n,p$ and $d$, for all $l=1,2,\ldots,L$, from (\ref{(25)}) we have
		\begin{equation}
			\label{(88)}
			\frac{1}{d}=\phi(\delta)=\phi(\sqrt{\frac{\log(p)}{n}}).   
		\end{equation}
		Hence, if $\frac{\log(p)}{n}=o(1)$, then Assumption 6.\ref{Assumption 6.1} simplifies to $$|\overline{\Omega}_{ij}^{0}|=\phi\bigg(\bigg(\frac{\log(p)}{n}\bigg)^{1/8}\bigg),$$
		for all $(i,j)\in E$.
	\end{Remark}
	\section{Applications}\label{Section 4}
	\subsection{Application on synthetic datasets}\label{Section 4.1}
	To evaluate the proposed methodology, we consider simulated datasets generated from chain and star graph structures. These graph configurations are chosen because they satisfy the irrepresentability conditions required for the theoretical results, thereby allowing us to empirically investigate model selection consistency and supremum norm differences. Before presenting the simulation results, we first describe the procedure used for selecting the penalty parameters.
	
	According to our theoretical framework, these parameters should be $\lambda=LC_1\sqrt{\frac{\log\ p}{n}}$, and $\mu=C_2\sqrt{\frac{\log\ p}{n}}$. In practice, however, the tuning parameters $(C_1,C_2)$ are selected using the Extended Bayesian Information Criterion (EBIC). The optimal pair $(C_1,C_2)$ is determined through a grid-search procedure by selecting the values that minimize the EBIC with $\gamma=0.5$.
	
	For the chain graph setting, the population precision matrices follow a tridiagonal structure tridiag($\rho_{l},1,\rho_{l})$ with $\rho_{1}=0.2$, and $\rho_{2}=0.35$ for populations 1 and 2, respectively. For the star graph setting, a hub node $X_{u}$ is randomly selected and connected to $d_{max}$ randomly chosen coordinates, which remain mutually disconnected. Let $b$ denote the index set of these randomly chosen coordinates. The precision matrices are then defined by
	\begin{eqnarray*}
		\Omega_{l,ij} & = & \begin{cases}
			c_{l} & ,i=j\\
			\delta_{l} & ,i=u,j\in b\text{ or }i\in b,j=u\\
			0 & ,o.w.,
		\end{cases},
	\end{eqnarray*}
	where $(c_1,\delta_1)=(2,0.3)$ and $(c_2,\delta_2)=(2.5,0.45)$. In both settings, observations are generated from multivariate Gaussian distributions with zero mean and covariance matrix $\left(\boldsymbol{\Omega}_{l}\right)^{-1}$, for $l=1,2$, across dimensions $p=25,50,75,100$, and sample sizes $n$ ranging from $50$ to $600$, with the entire simulation procedure repeated $B=100$ times.
	\begin{figure}[t]
		\centering{}\caption{Star-shaped graph structures used in the simulation study. Left: Population 1 with hub-edge weight 0.3 and diagonal entries 2. Right: Population 2 with hub-edge weight 0.45 and diagonal entries 2.5. Both networks share the same hub-and-spoke topology with $p=25$ nodes and maximum degree $d_{\max} = 20$, differing only in the magnitude of the non-zero precision matrix entries.}
		\includegraphics[scale=0.29]{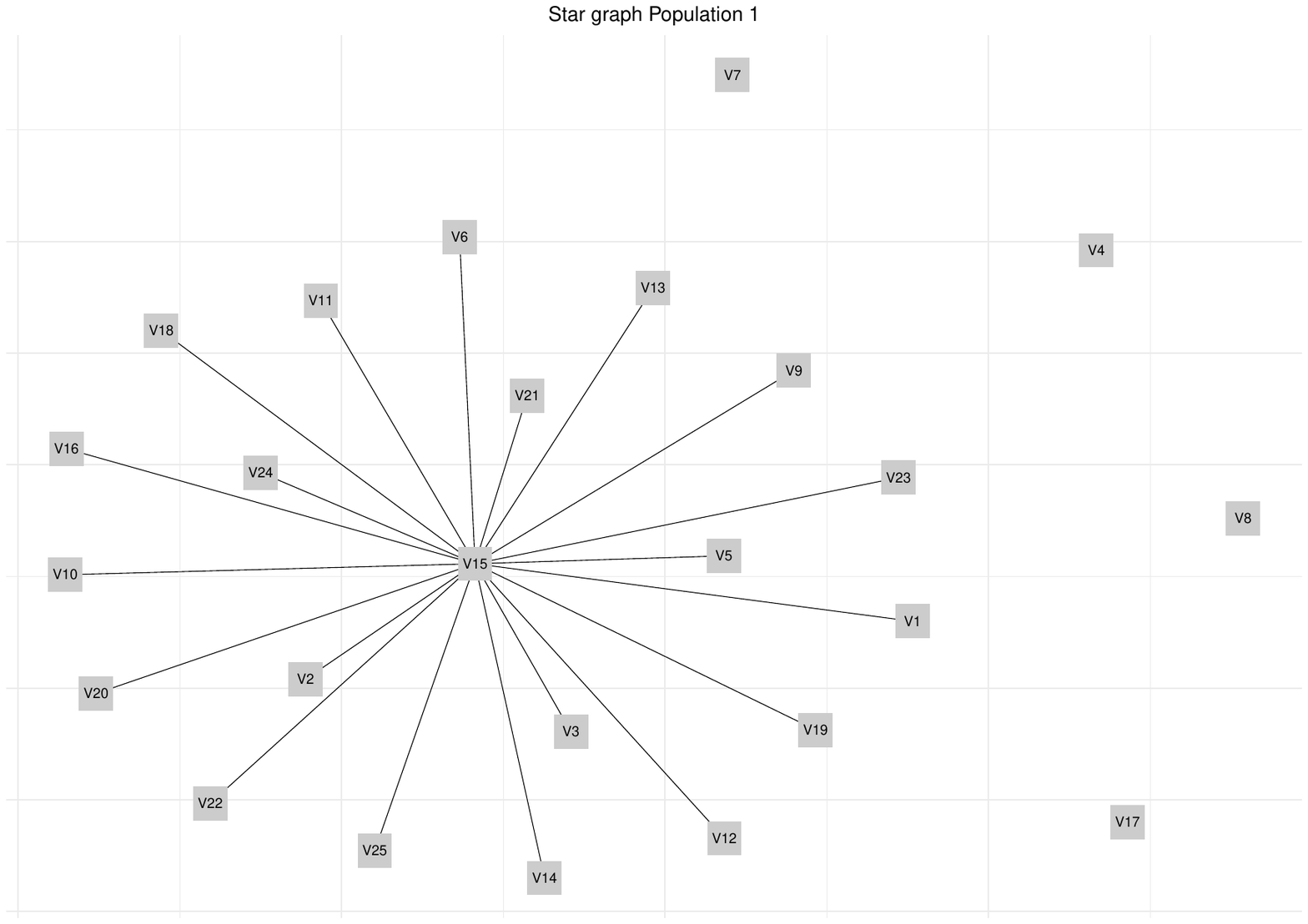}\includegraphics[scale=0.29]{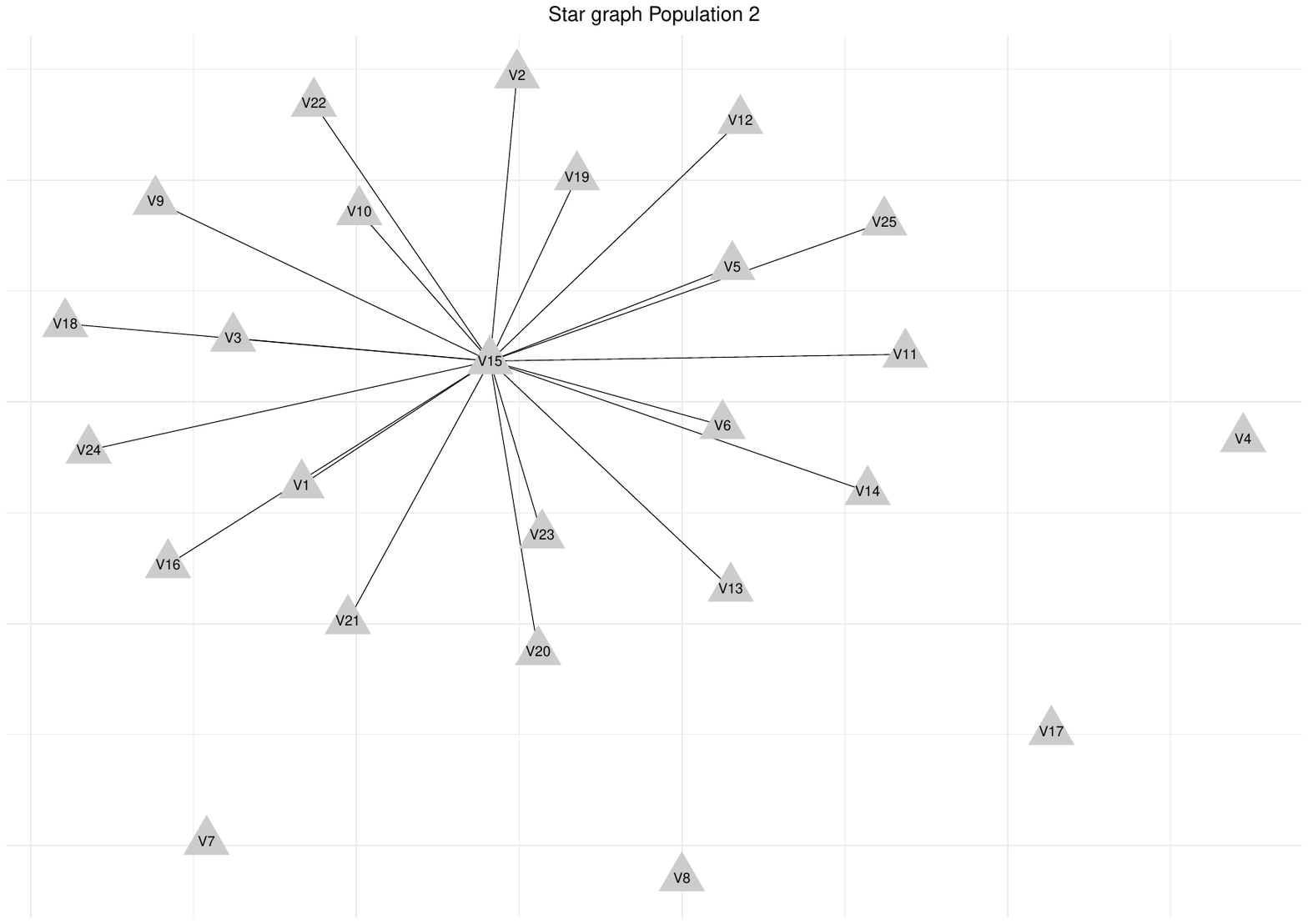}
		\label{Figure 1}
	\end{figure}
	
	\subsubsection{Model consistency against sample size}
	The results presented here illustrate the probability of correctly recovering the signed edge-set for both population models. For a fixed sample size $n$ and dimension $p$, we report the proportion of samples that successfully identify the edge set whose signs match those of the true precision matrices across both population models. For the star graphs, a maximum degree of $d_{max} = 20$ is assumed.
	
	As sample size grows, the proportion of correctly recovered signed edge sets increases steadily toward one for both chain and star graph structures, confirming the model selection consistency. Higher-dimensional settings require larger sample sizes to achieve the similar level of consistency reflecting the expected dependence of convergence rates on dimensionality (Figure \ref{Figure 2}).
	\begin{figure}[t]
		\caption{Proportion of simulation replications ($B = 100$) in which the Mglasso correctly recovers the signed edge set for both population models simultaneously, plotted against sample size for chain (left) and star (right) graph structures across dimensions $p \in \{25, 50, 75, 100\}$. Curves shift rightward with increasing dimension, reflecting the slower convergence to model consistency in higher-dimensional settings.}
		
		\centering{}\includegraphics[scale=0.29]{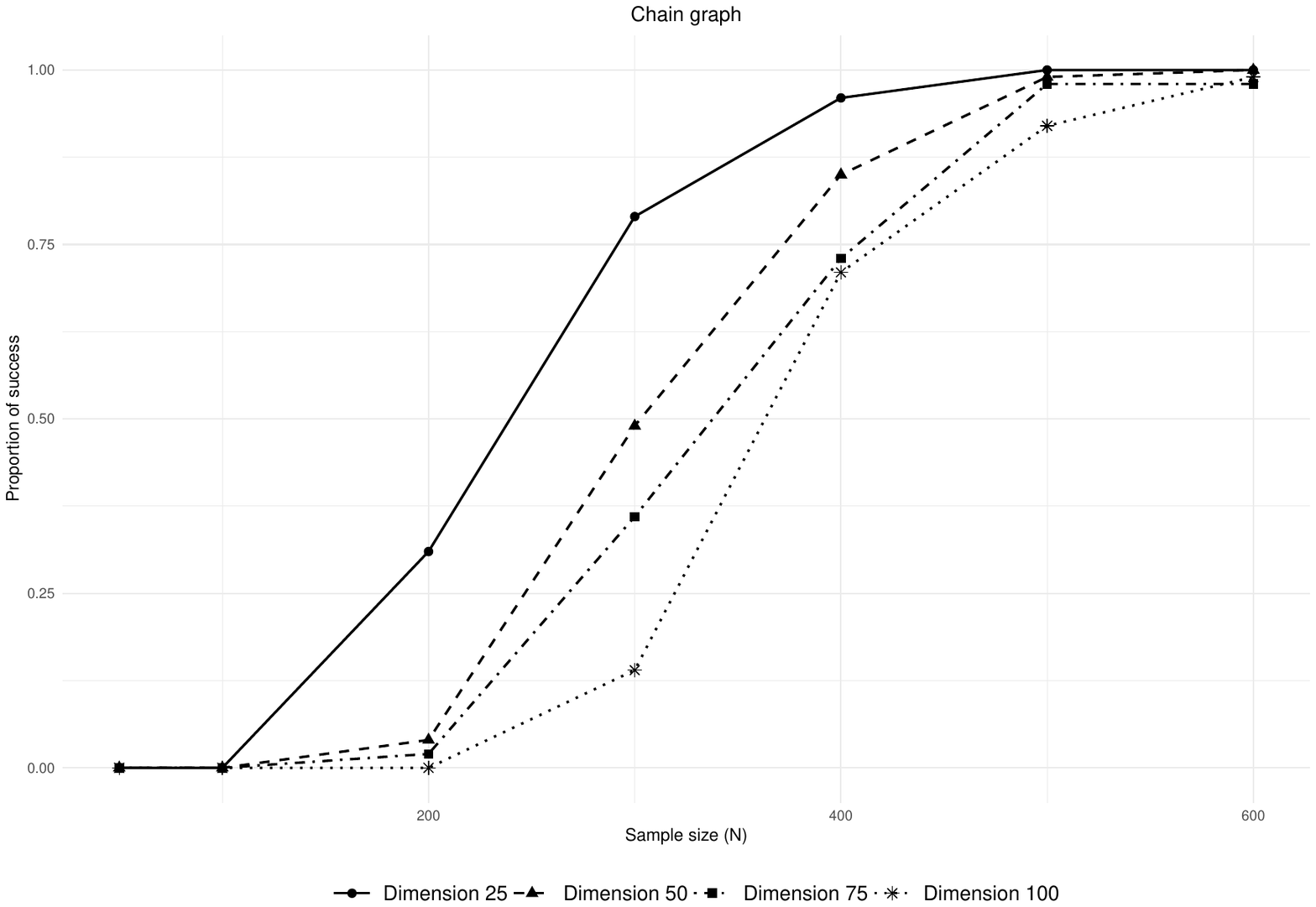}\includegraphics[scale=0.29]{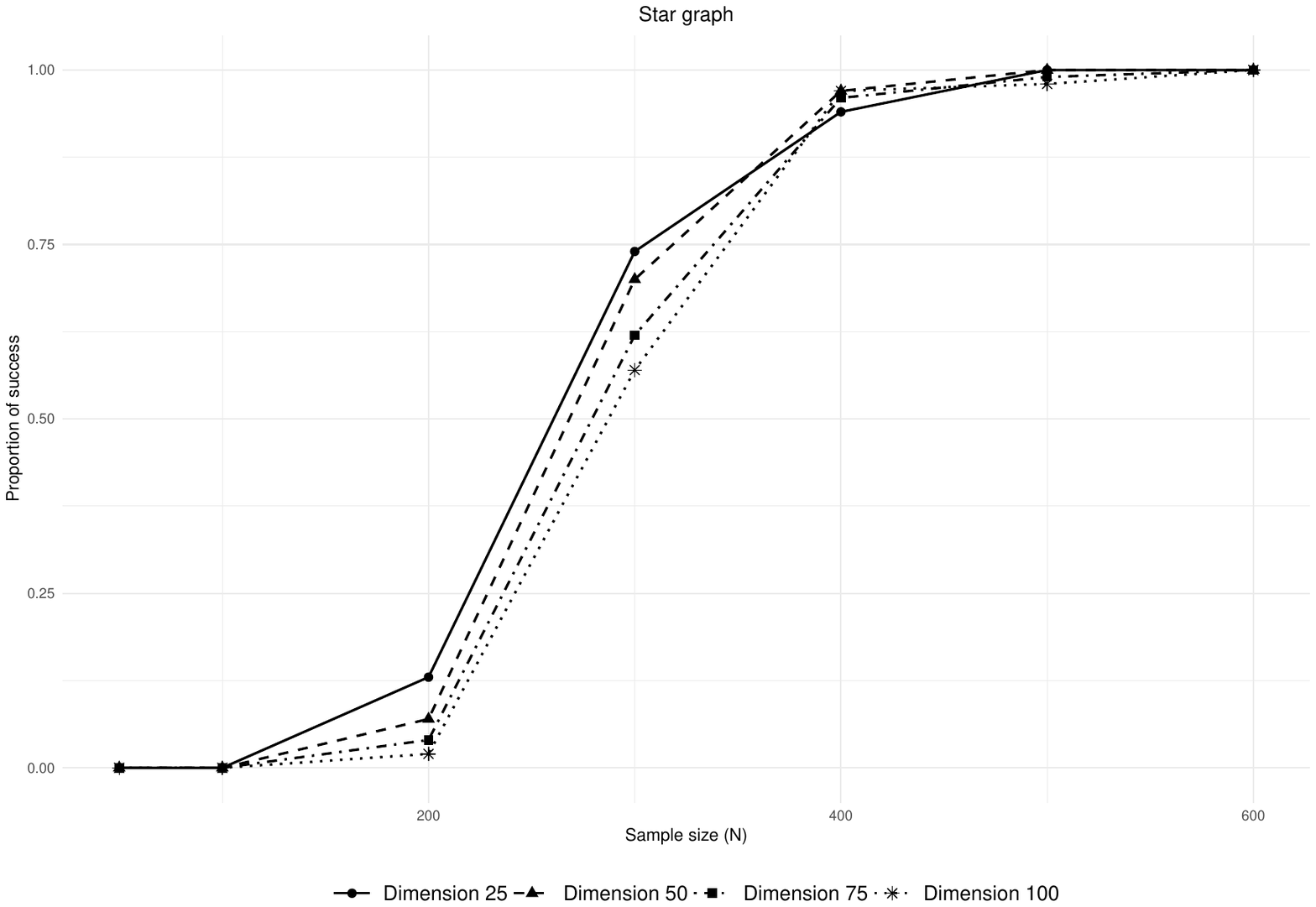}
		\label{Figure 2}
	\end{figure}
	
	\subsubsection{Supnorm convergence against sample size}
	We present the average supremum norm distance between the predicted and true precision matrices, calculated across both populations for a fixed sample size and dimension. The average supremum norm distance between the estimated and true precision matrices decreases monotonically with increasing sample size across both graph structures and all dimensions considered. Higher-dimensional settings exhibit slower convergence, with the gap between dimensions narrowing at larger sample sizes (Figures \ref{Figure 3} and \ref{Figure 4}).
	\begin{figure}[t]
		\caption{Average supremum norm distance between the Mglasso estimates and the true precision matrices for Population 1 (left) and Population 2 (right) in the chain graph setting, plotted against sample size for $p \in \{25, 50, 75, 100\}$. All curves decrease monotonically with sample size, with higher dimensions exhibiting slower convergence.}
		
		\centering{}\includegraphics[scale=0.29]{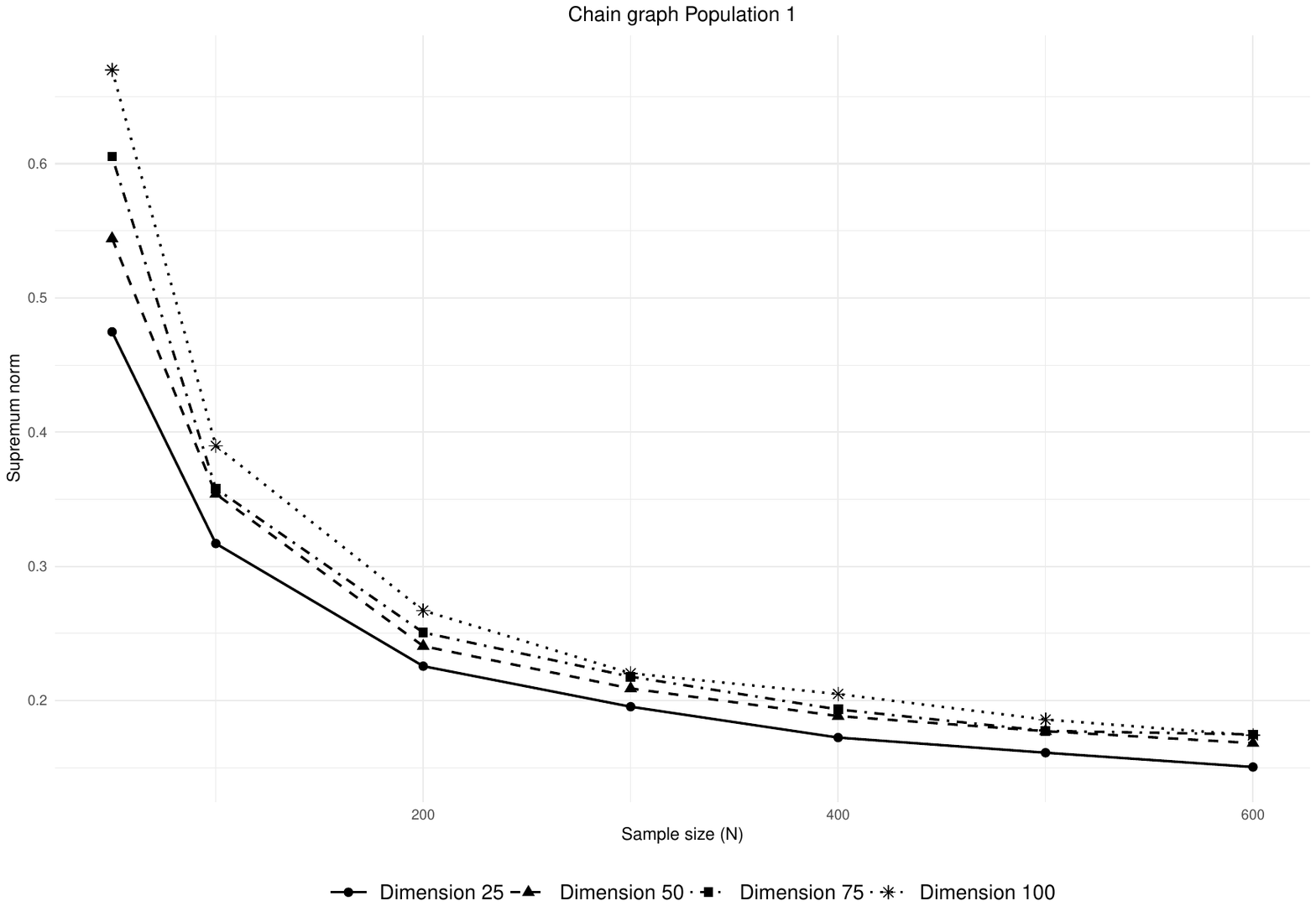}\includegraphics[scale=0.29]{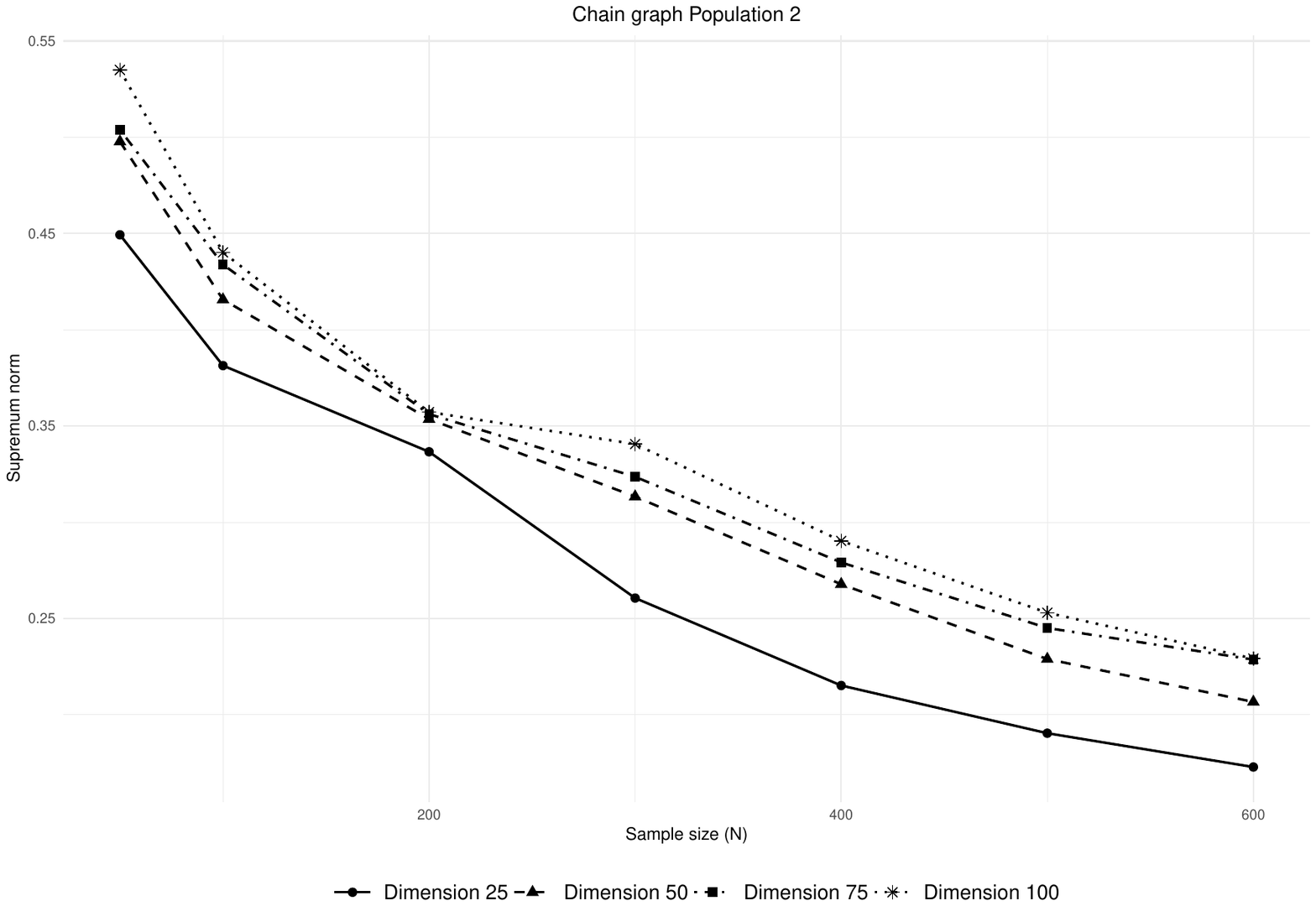}
		\label{Figure 3}
	\end{figure}
	\begin{figure}[t]
		\caption{Average supremum norm distance between the Mglasso estimates and the true precision matrices for Population 1 (left) and Population 2 (right) in the star graph setting, plotted against sample size for $p \in \{25, 50, 75, 100\}$. The star graph yields larger absolute estimation errors than the chain graph, consistent with its higher maximum degree, but the same monotone decreasing trend with sample size is observed.}
		
		\centering{}\includegraphics[scale=0.29]{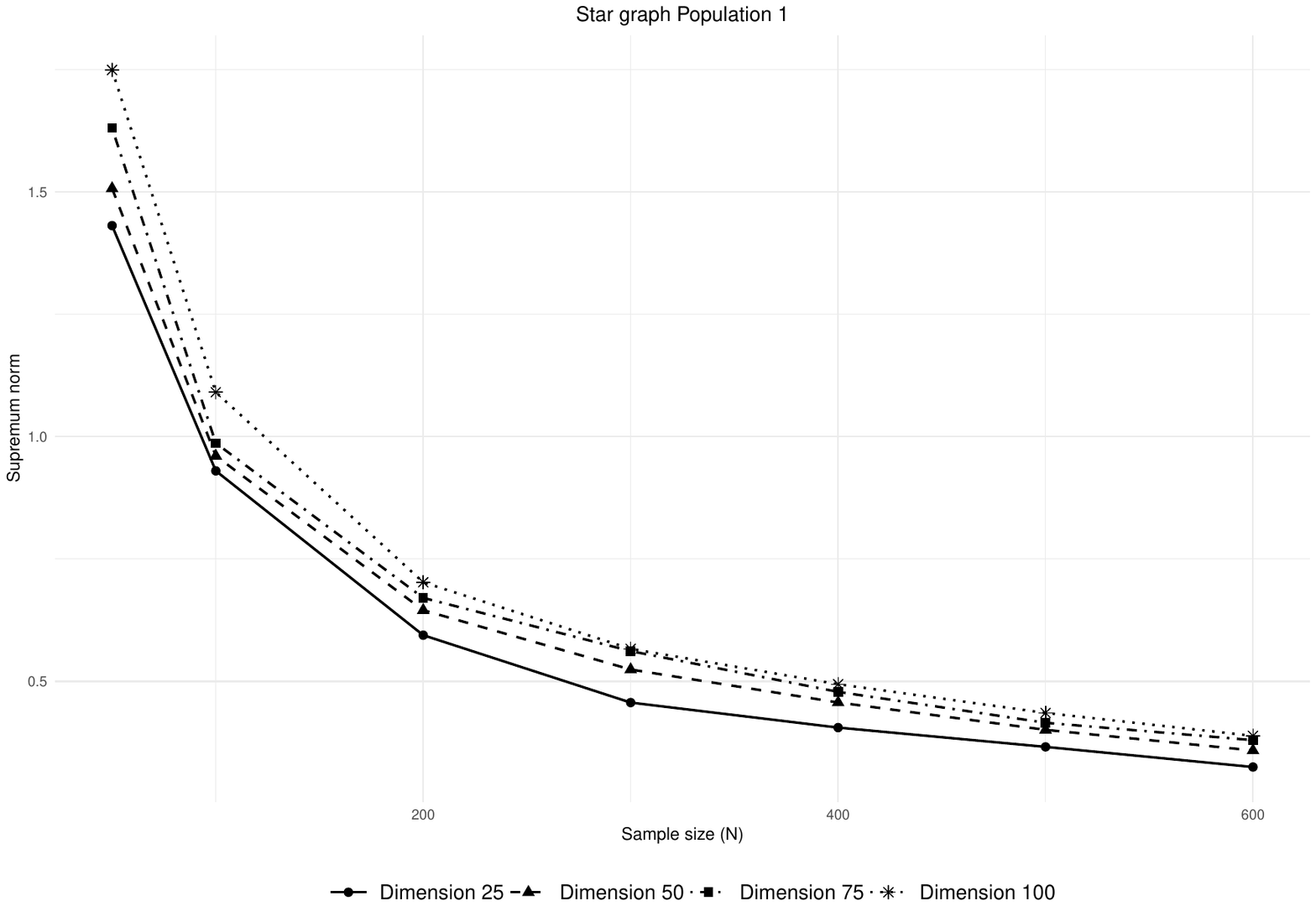}\includegraphics[scale=0.29]{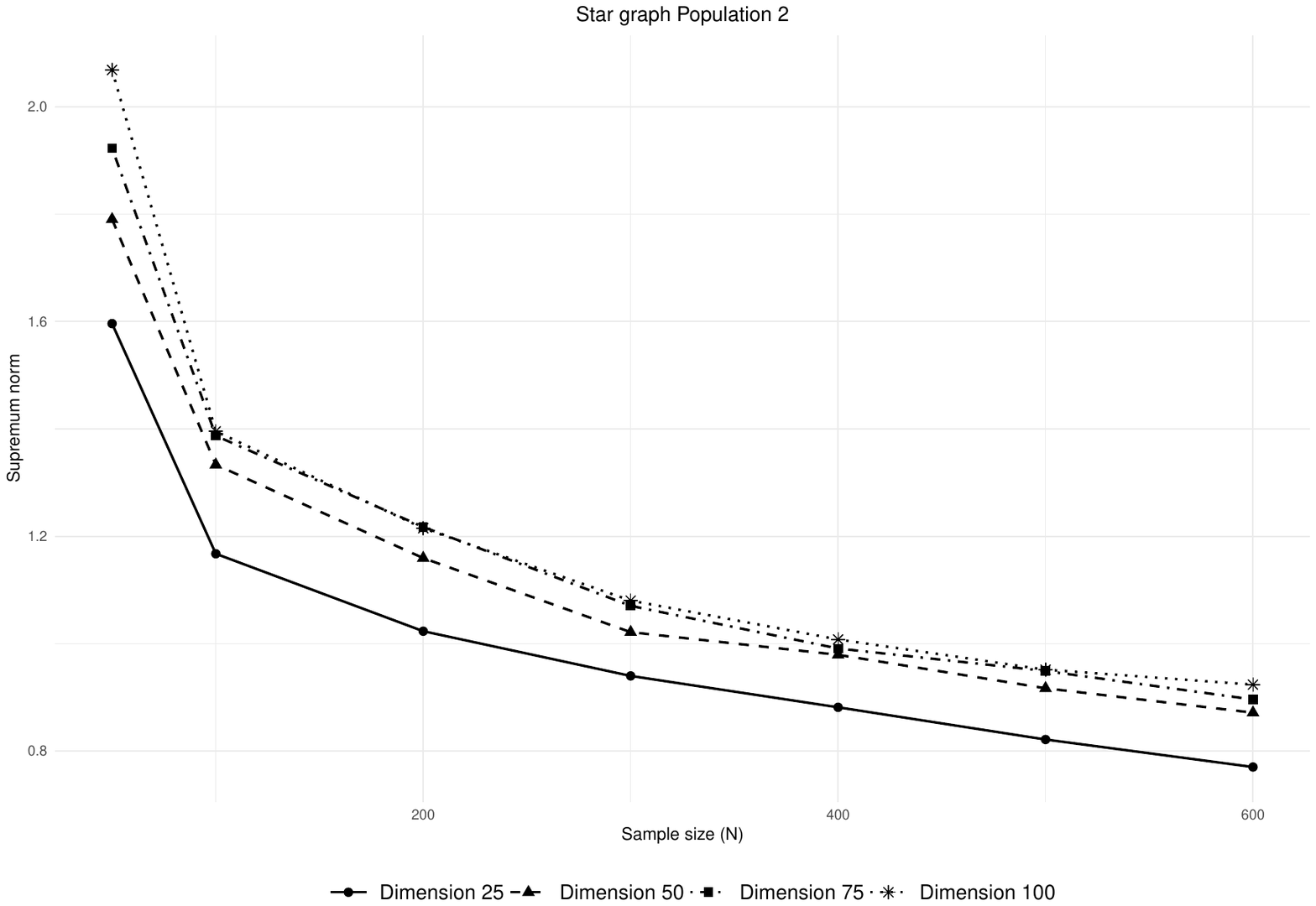}
		\label{Figure 4}
	\end{figure}
	\subsubsection{True positives and false positives against sample size}
	The number of correctly identified edges increases with sample size for both graph types, while false positives remain small and does not exhibit a systematic upward trend, indicating accurate support recovery without growing false discovery burden. Figures \ref{Figure 5} and \ref{Figure 6} illustrate these findings in terms of true and false positive edges.
	\begin{figure}[t]
		\caption{Number of true positive edges (top row) and false positive edges (bottom row) identified by the Mglasso estimator for Population 1 (left column) and Population 2 (right column) in the chain graph setting, plotted against sample size for $p \in \{25, 50, 75, 100\}$. True positive counts increase toward their theoretical maximum with sample size while false positives remain near zero, confirming support recovery consistency.}
		
		\centering{}\includegraphics[scale=0.47]{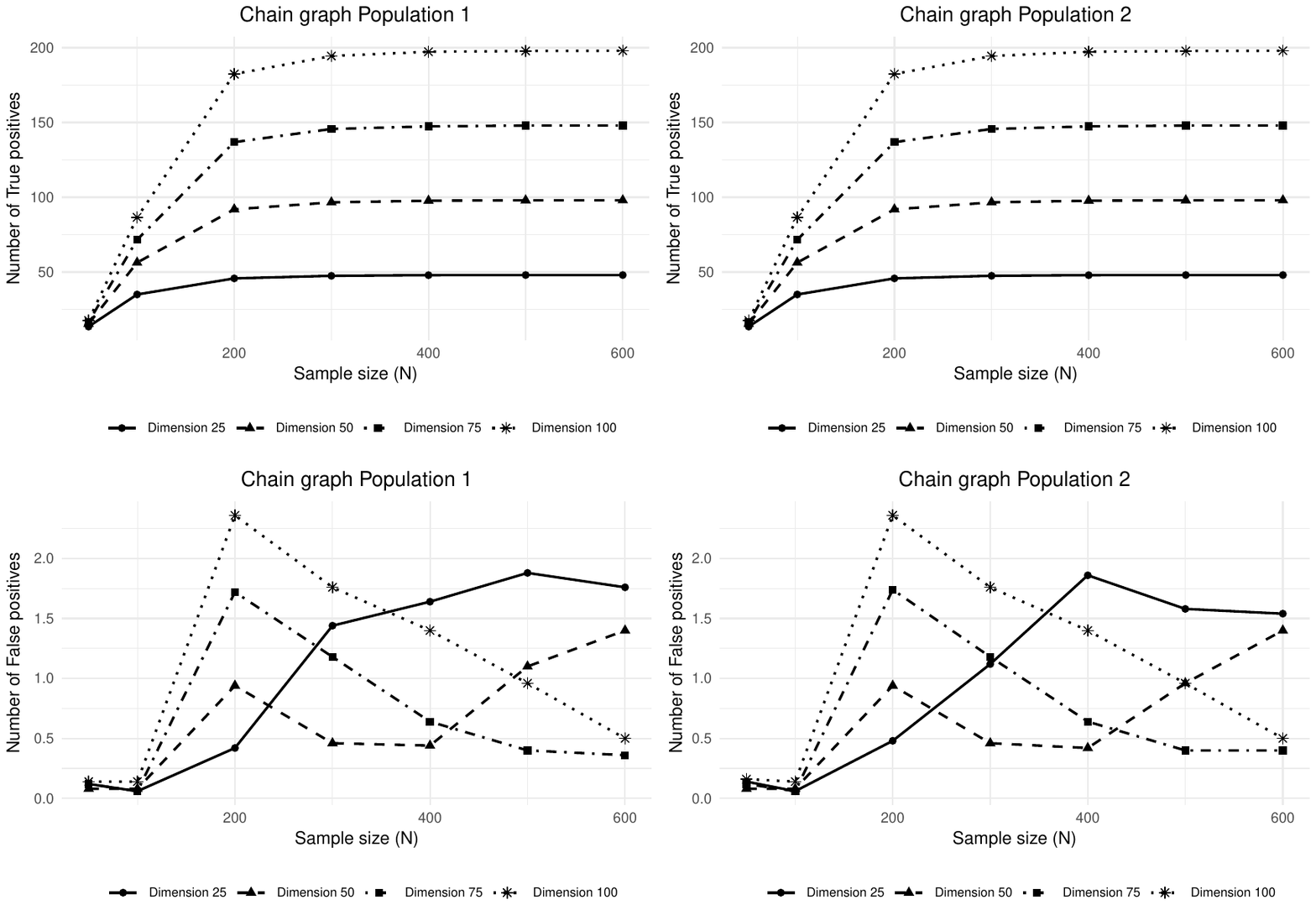}
		\label{Figure 5}
	\end{figure}
	\begin{figure}[t]
		\caption{. Number of true positive edges (top row) and false positive edges (bottom row) identified by the Mglasso estimator for Population 1 (left column) and Population 2 (right column) in the star graph setting, plotted against sample size for $p \in \{25, 50, 75, 100\}$. The pattern mirrors Figure \ref{Figure 5}, with true positives increasing and false positives remaining negligible across all dimensions.}
		
		\centering{}\includegraphics[scale=0.47]{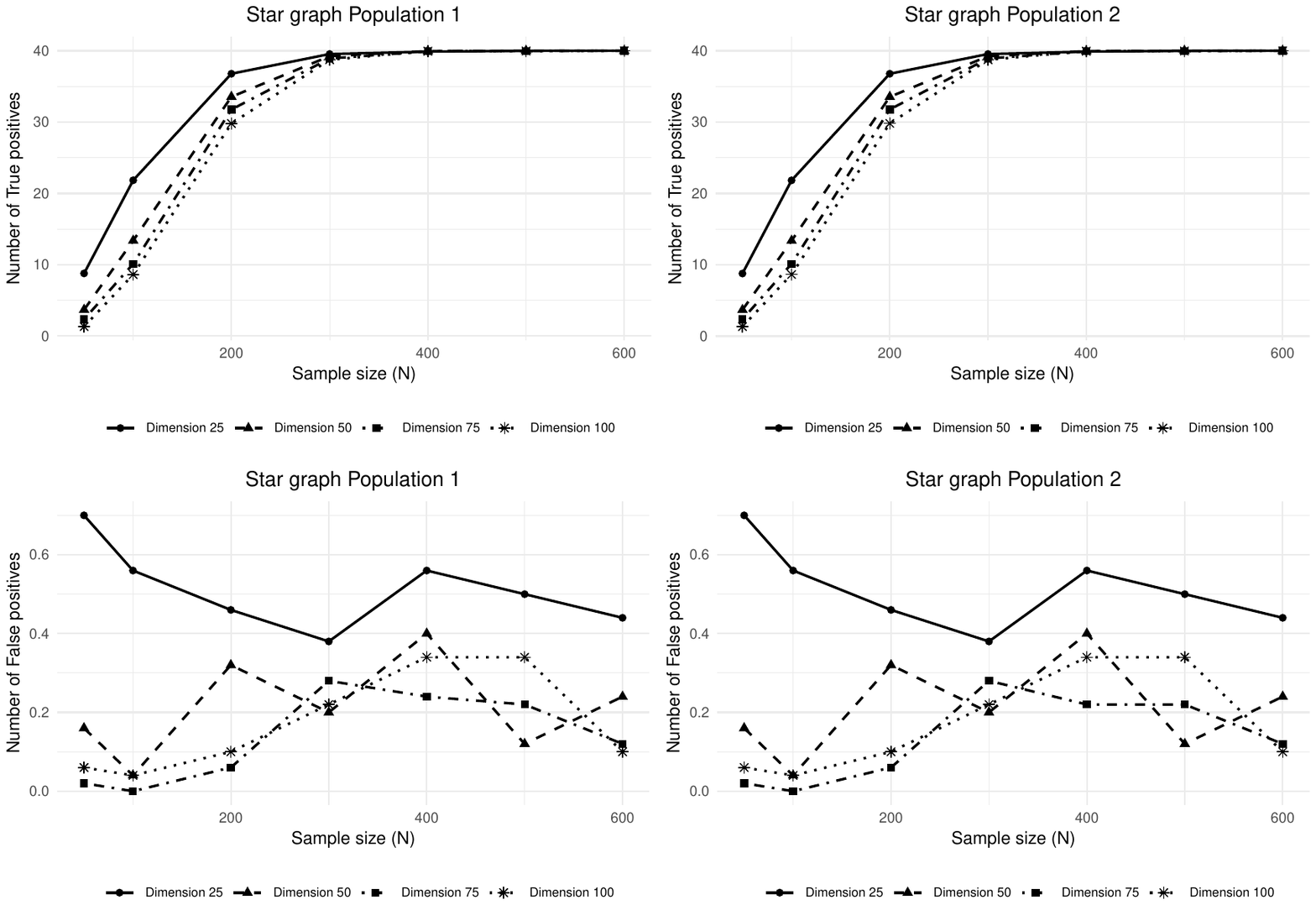}
		\label{Figure 6}
	\end{figure}
	\subsubsection{Comparison against Group Graphical Lasso (GGL)}
	We compare the performance of the Mglasso with GGL, proposed by \cite{danaher2014joint}. The Mglasso achieves higher model selection consistency than the GGL across all dimensions and both graph structures, attaining the correct signed edge set with substantially smaller sample sizes (Figures \ref{Figure 7}–\ref{Figure 8}). In terms of estimation accuracy, Mglasso yields similar performance to GGL, with both methods demonstrating comparable average supremum norm distances from the true precision matrices across most dimension–sample size combinations for chain graphs; however, it is worth noting that this estimation error is noticeably more prominent in star graph topologies (Figures \ref{Figure 9}-\ref{Figure 10}). Ultimately, this demonstrates that while the proposed common-sparsity decomposition achieves faster and more parsimonious structural recovery, it maintains highly competitive estimation accuracy.
	\begin{figure}[t]
		\caption{Model selection consistency comparison between the Mglasso and GGL on chain graph simulations, for $p \in \{25, 50, 75, 100\}$. Mglasso achieves the correct signed edge set at substantially smaller sample sizes than GGL across all dimensions, with the gap widening in higher-dimensional settings.}
		
		\centering{}\includegraphics[scale=0.47]{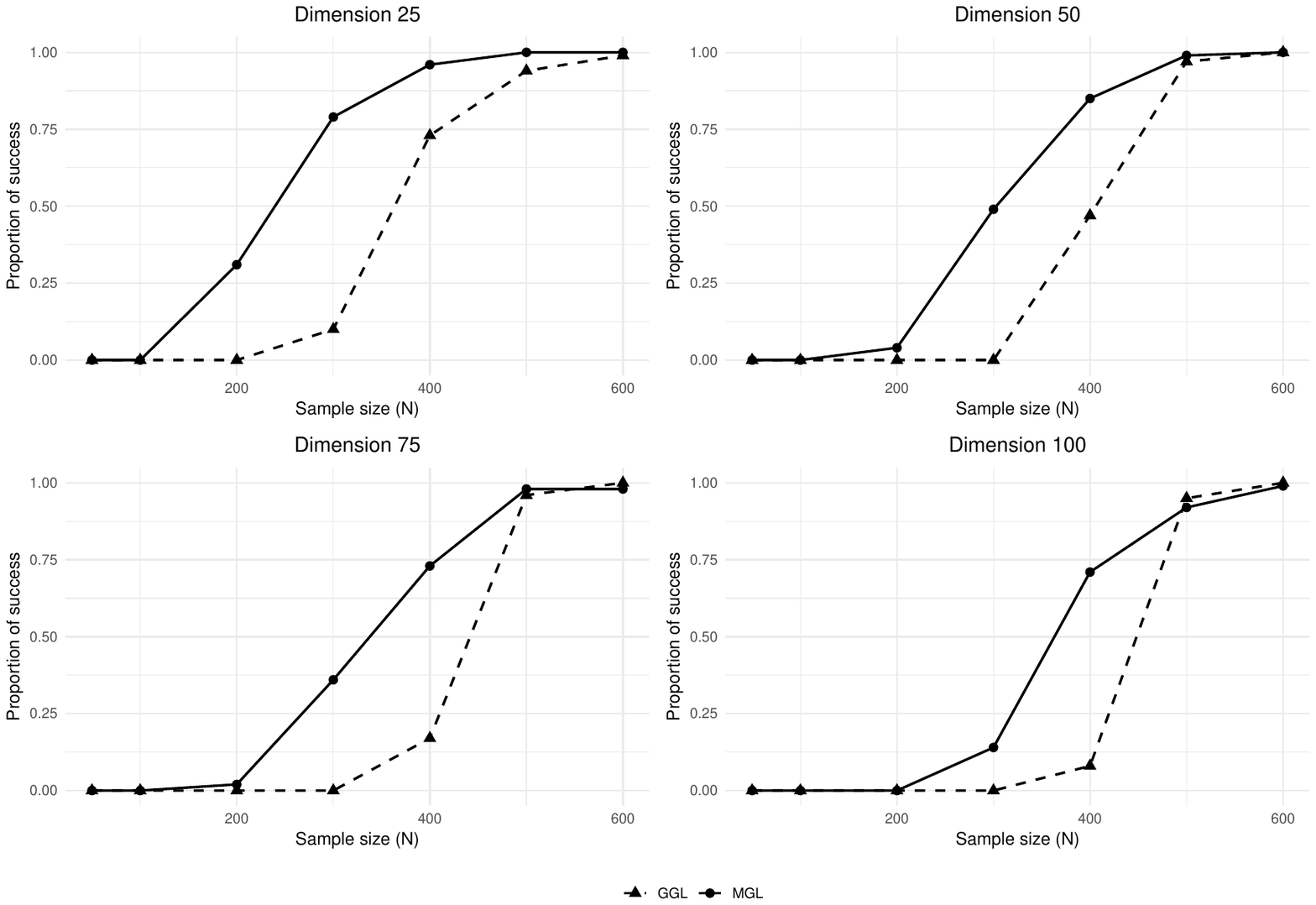}
		\label{Figure 7}
	\end{figure}
	\begin{figure}[t]
		\caption{Model selection consistency comparison between the Mglasso and GGL on star graph simulations, for $p \in \{25, 50, 75, 100\}$. Mglasso consistently reaches full consistency at smaller sample sizes, and the advantage over GGL is more pronounced for the star graph than for the chain graph, reflecting the greater benefit of enforcing a common sparsity structure in denser network settings.}
		
		\centering{}\includegraphics[scale=0.47]{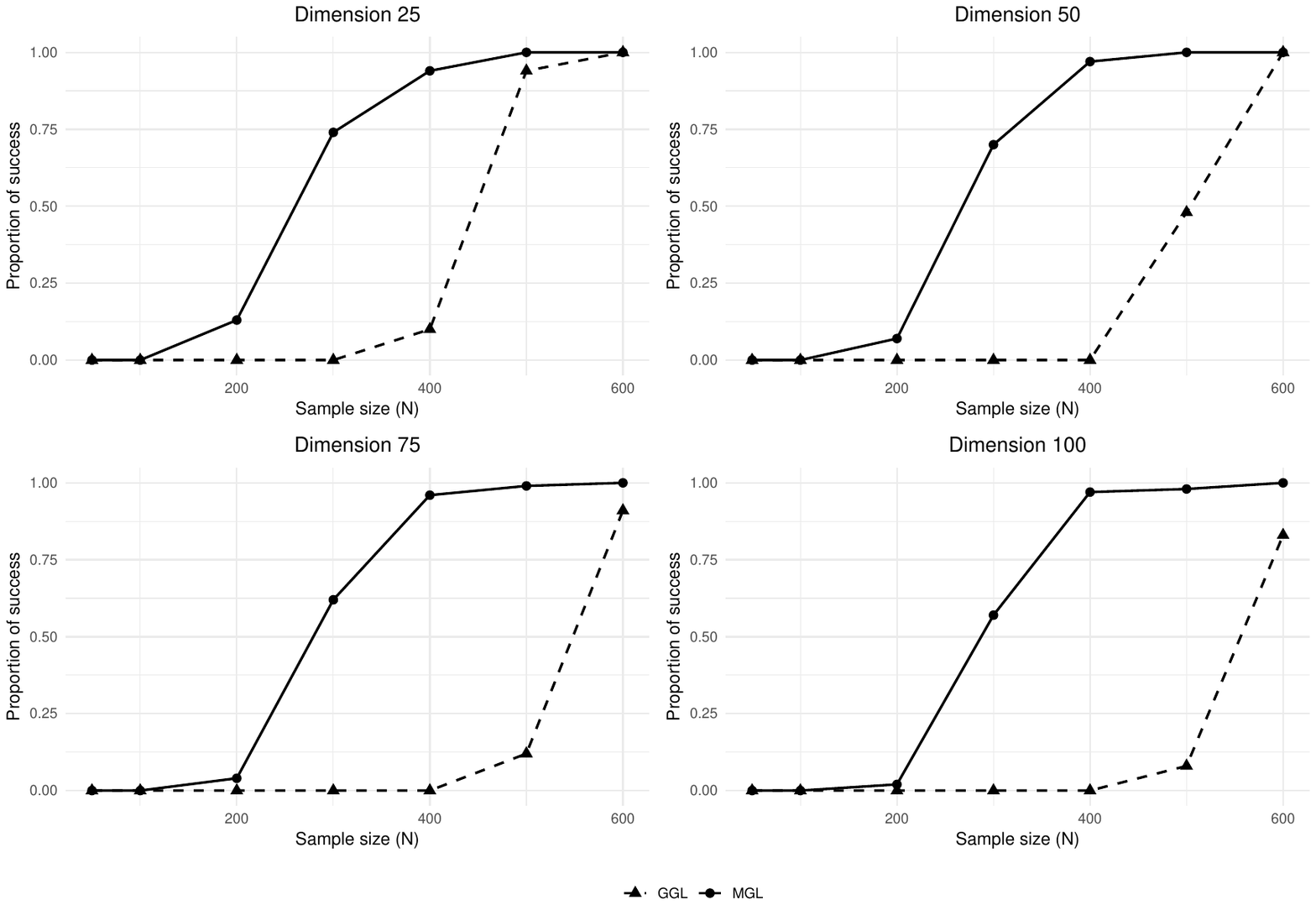}
		\label{Figure 8}
	\end{figure}
	
	\begin{figure}
		\centering
		\caption{ Average supremum norm distance from the true precision matrices, comparing Mglasso and GGL for Population 1 (panel a) and Population 2 (panel b) in the chain graph setting, across $p \in \{25, 50, 75, 100\}$. Mglasso achieves uniformly lower estimation error than GGL at all sample sizes and dimensions, with the gap diminishing as sample size grows.}
		
		\subfloat[Population 1]{
			
			\includegraphics[scale=0.47]{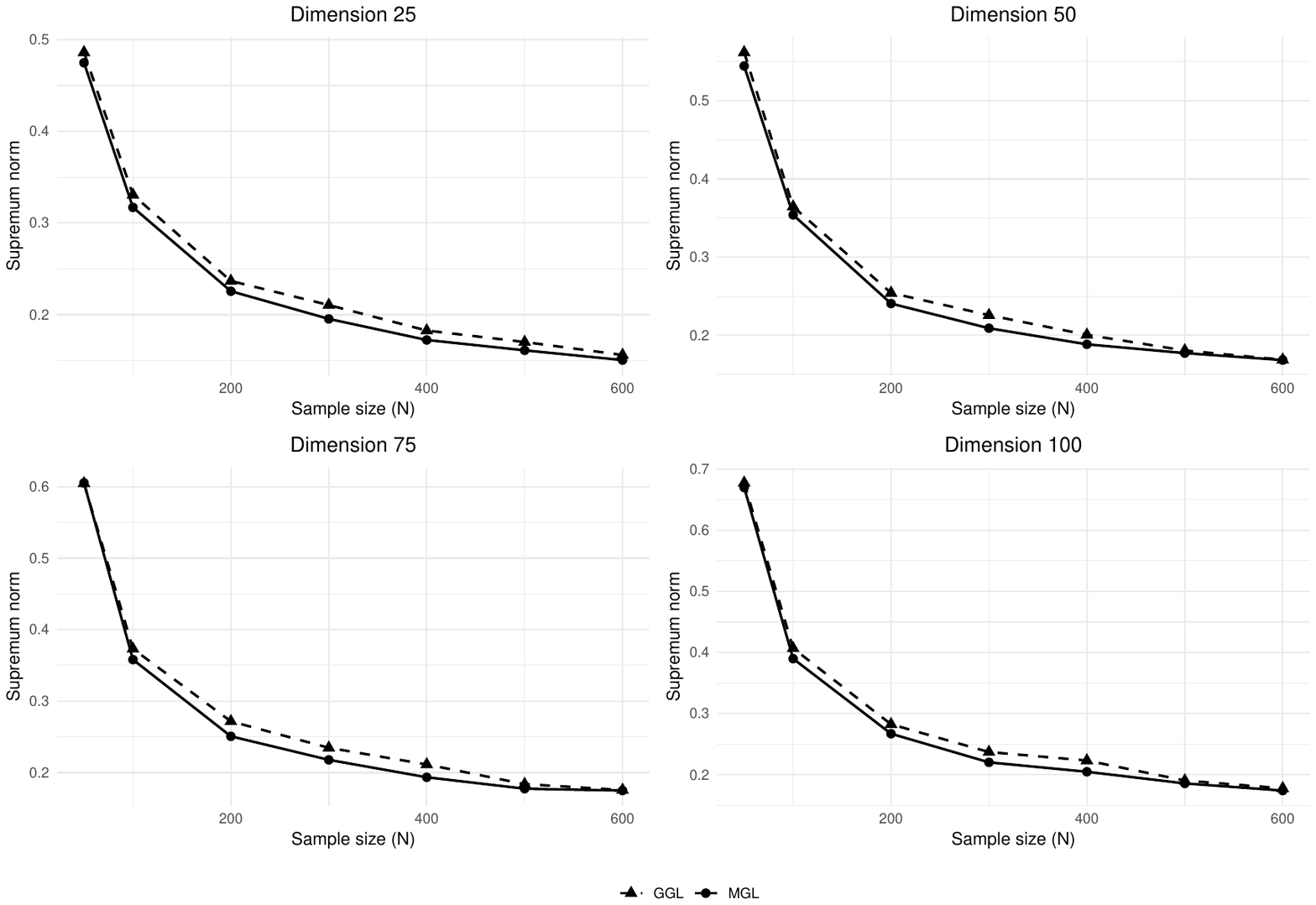}}
		
		\subfloat[Population 2]{
			
			\includegraphics[scale=0.47]{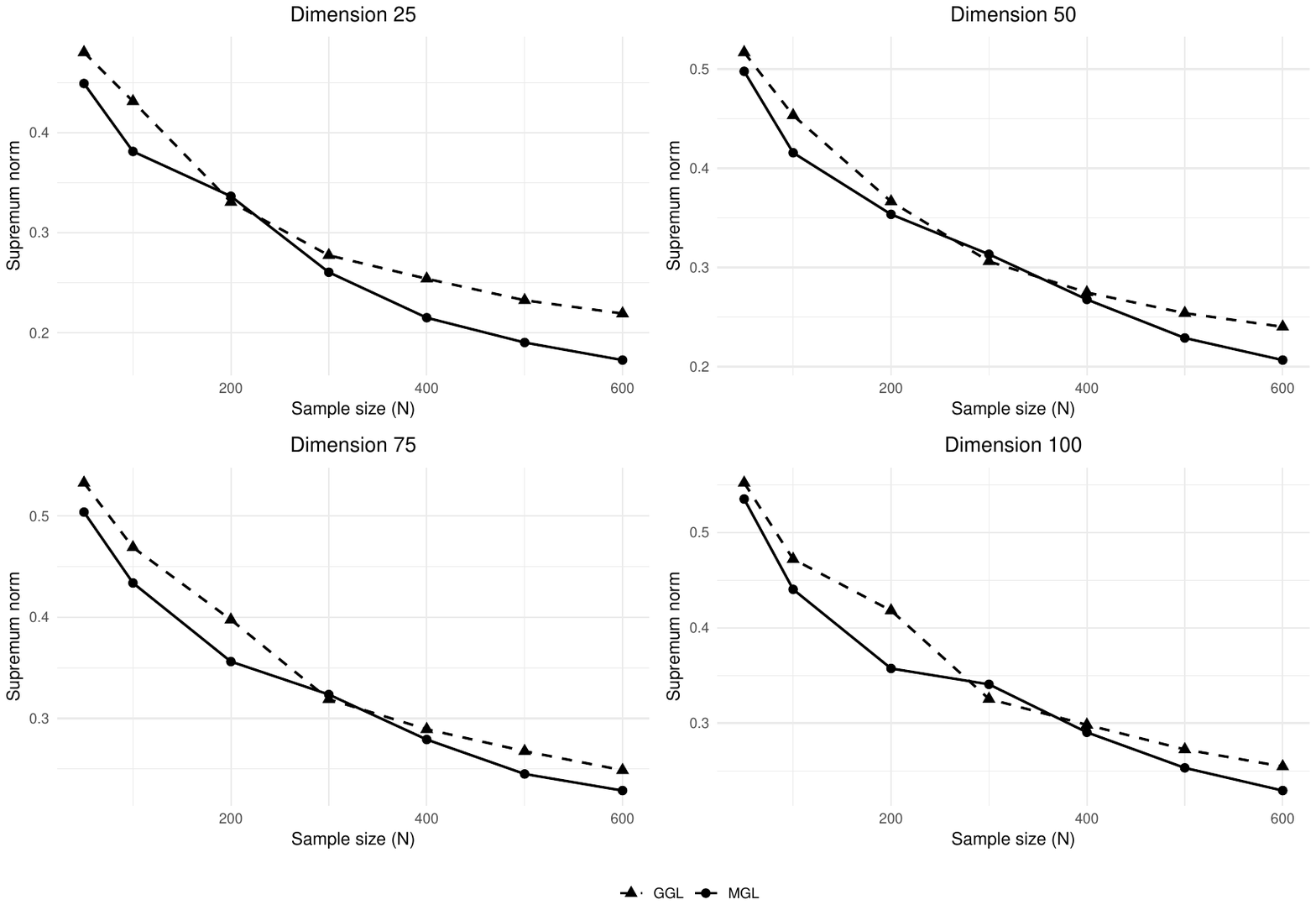}}
		\label{Figure 9}
	\end{figure}
	
	\begin{figure}
		\centering
		\caption{Average supremum norm distance from the true precision matrices, comparing Mglasso and GGL for Population 1 (panel a) and Population 2 (panel b) in the star graph setting, across $p \in \{25, 50, 75, 100\}$. Mglasso yields substantially lower supremum norm errors than GGL throughout, and the advantage is more pronounced in the star graph than in the chain graph setting, consistent with the higher connectivity of the star network amplifying the benefit of shared-sparsity estimation}
		
		\subfloat[Population 1]{
			
			\includegraphics[scale=0.47]{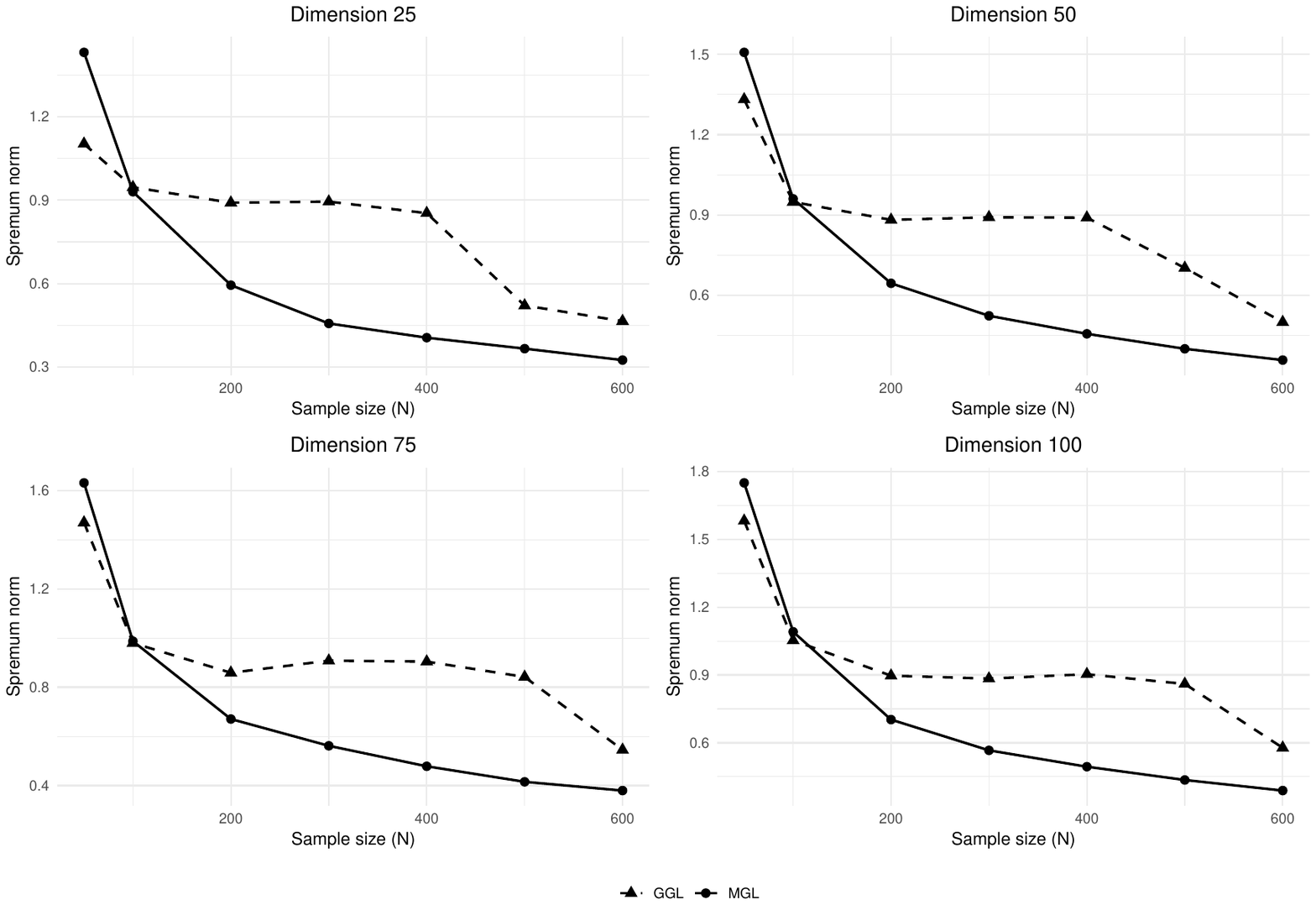}}
		
		\subfloat[Population 2]{
			
			\includegraphics[scale=0.47]{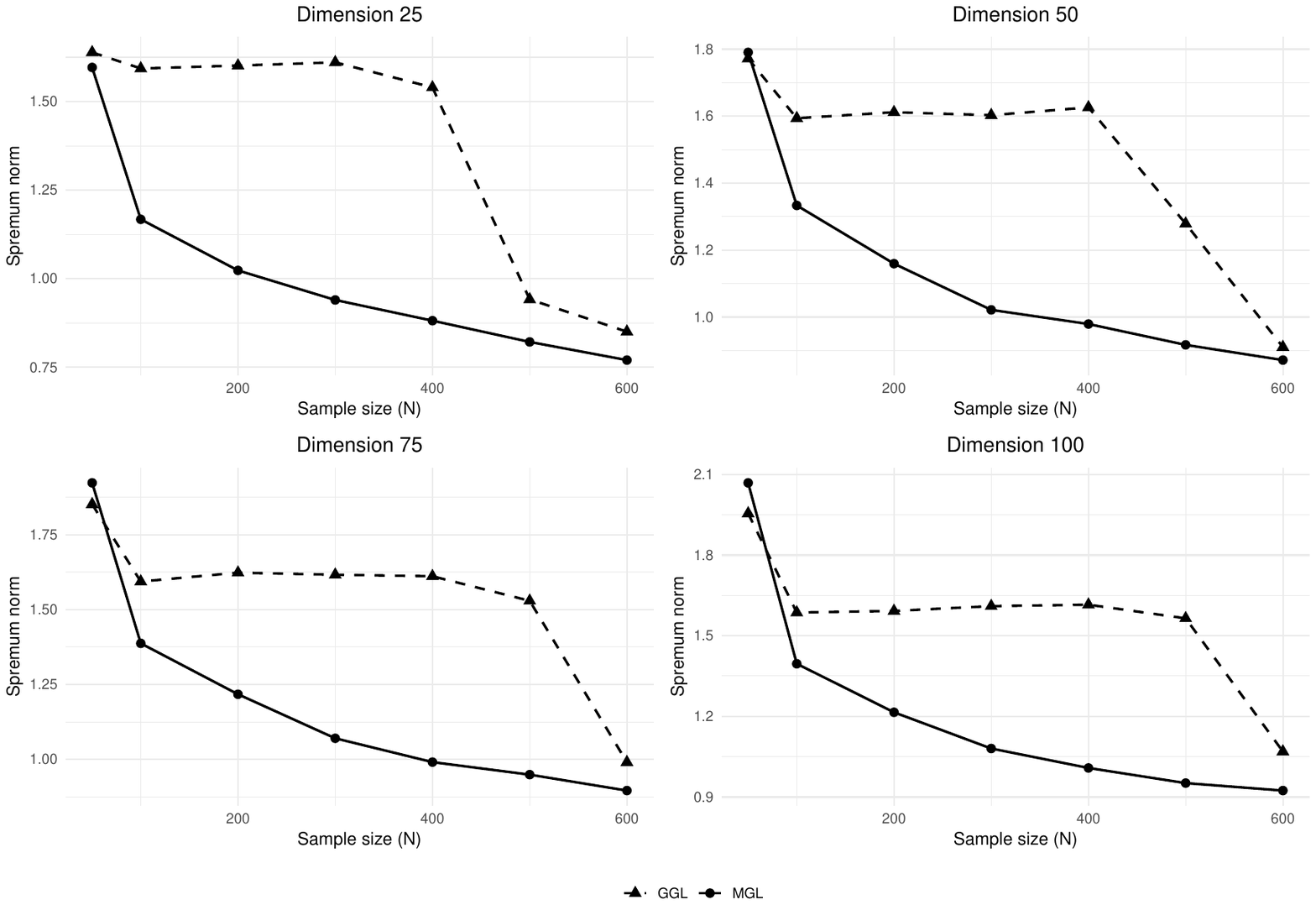}}
		\label{Figure 10}
	\end{figure}
	\subsection{A Heterogeneous Sparsity Example}\label{Section 4.2}
	In this section, we analyze the performance of Mglasso in recovering both common and population-specific graph structures on a synthetic dataset, where the precision matrices have dissimilar sparsity pattern. We consider a simulation setup with $p=25$ variables across $L=3$ populations. The true graph for each population is based on a chain graph backbone. The edge set is partitioned into common edges, present in all three populations,
	\[
	E_{common}=\{(1,2),(3,4),(6,7),(10,11),(14,15),(17,18),(19,20),(20,21),(23,24)\},
	\]
	and uncommon edges, absent in at least one population:
	\[
	E_a=E_1\cap E_2\cap E_3^c=\{(8,9),(12,13),(22,23)\}, \quad
	E_b=E_1\cap E_2^c\cap E_3=\{(5,6),(11,12),(15,16)\},
	\]
	\[
	E_c=E_1^c\cap E_2\cap E_3^c=\{(13,14)\}, \quad
	E_d=E_1^c\cap E_2^c\cap E_3=\{(9,10)\}.
	\]
	Then the uncommon edge set $E_{uncommon}= E_a \cup E_b \cup E_c \cup E_d$. Define the population-specific edge sets as unions of the pieces above: 
	$$E_{1}=E_{common}\cup E_a\cup E_b, \qquad E_2 = E_{common}\cup E_a\cup E_c,\qquad E_3 = E_{common}\cup E_b\cup E_d.$$
	The binary adjacency matrices $\boldsymbol{U}_{1},\boldsymbol{U}_{2},\boldsymbol{U}_{3}\in\{0,1\}^{p\times p}$
	encode the edge structure of each population as
	$$U_{k,ij}=\mathbb{I}\{(i,j)\in E_k\},\qquad k=1,2,3,$$
	where each edge $(i,j)\in E_{k}$ is an ordered pair with $i<j$, so $U_k$ is upper-triangular. The precision matrices are then defined as: $\boldsymbol{\Omega}_{1}=\boldsymbol{I}_{p}+0.2*(\boldsymbol{U}_{1}+\boldsymbol{U}_{1}^{T})$,
	$\boldsymbol{\Omega}_{2}=\boldsymbol{I}_{p}+0.35*(\boldsymbol{U}_{2}+\boldsymbol{U}_{2}^{T})$,
	and $\boldsymbol{\Omega}_{3}=\boldsymbol{I}_{p}+0.25*(\boldsymbol{U}_{3}+\boldsymbol{U}_{3}^{T})$. The resulting network structures are visualized in Figure \ref{Figure 8.1}.
	\begin{figure}
		
		\centering{
			\includegraphics[scale=0.47]{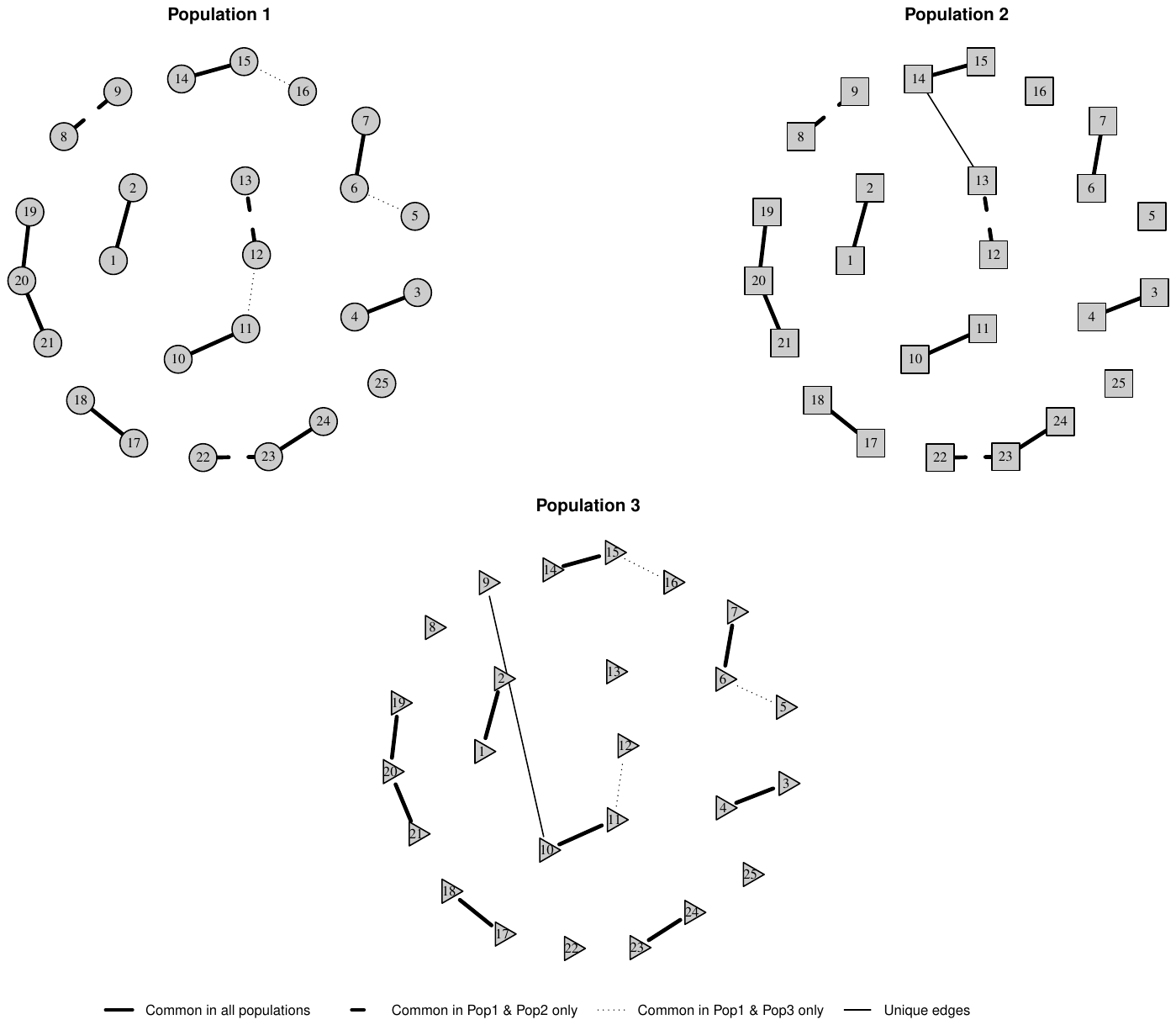}		
		}
		\caption{\label{Figure 8.1}Graphical network structures for different populations. (a) Population
			1 with precision matrix $\boldsymbol{\Omega}_{1}=2\boldsymbol{I}_{p}+0.5\boldsymbol{Z}+\boldsymbol{U_{1}}$;
			(b) Population 2 with precision matrix $\boldsymbol{\Omega}_{2}=\boldsymbol{I}_{p}+0.35*(\boldsymbol{U}_{2}+\boldsymbol{U}_{2}^{T})$;
			(c) Population 3 with precision matrix $\boldsymbol{\Omega}_{3}=\boldsymbol{I}_{p}+0.25*(\boldsymbol{U}_{3}+\boldsymbol{U}_{3}^{T})$}
		
	\end{figure}
	For each population, we draw $n$ samples ranging from $100$ to $600$, and repeat the process for $B=100$ times. Precision matrices are estimated via Mglasso with penalty parameters chosen as before using EBIC. 
	
	We define the union edge set $E = E_{common} \cup E_{uncommon}$ as the collection of coordinates with a non-zero entry in at least one population, and let $\hat{E}_{i}$, $i=1,2,3$, denote the estimated edge set for population $i$. Restricting attention to the estimated edges that fall within the true edge set, we find that these subsets agree closely across populations, with $\hat{E}_{i} = \hat{E}$ holding in most of the $B=100$ replications.
	
	Since the edges in $E_{common}$ have non-zero entries in every population's precision matrix, a reliable method should detect them consistently. Indeed, these common edges are recovered in the majority of replications across the full range of sample sizes considered ($n \in [100,600]$), and detection frequency tends to improve as $n$ grows, consistent with the estimator's expected asymptotic behavior.
	
	For the uncommon edges, the results show a clear two-tier pattern. Edges shared between two populations---such as $(8,9)$ and $(5,6)$---are detected at moderate rates, around 30--50\%, while edges specific to only one population---$(13,14)$ and $(9,10)$---are detected far less consistently (between 4 and 23 times out of 100). This gap reflects the weaker statistical signal underlying population-specific structure. The uncommon edges essentially act as model bias terms, and hence their estimates do not improve with sample sizes, as expected.
	
	One thing that needs to be noted is that the estimated precision matrices have similar sparsity pattern that is shared across all the populations even in the presence of heterogeneous edge strengths, a key property of the Mglasso estimator. Furthermore, among the 283 coordinate pairs in $E^{c}$, on average only 5--6 were falsely flagged as edges, supporting the method's low false-positive rate.
	
	Finally, Figure~\ref{Figure 8.2} shows the supremum-norm distance between the estimated and true matrices, computed over the diagonal entries and the coordinates in $E_{common}$. This distance decreases as sample size increases across all populations. Full tables corresponding to this analysis are provided in the supplementary material.
	
	\begin{figure}
		
		\centering{
			\includegraphics[scale=0.47]{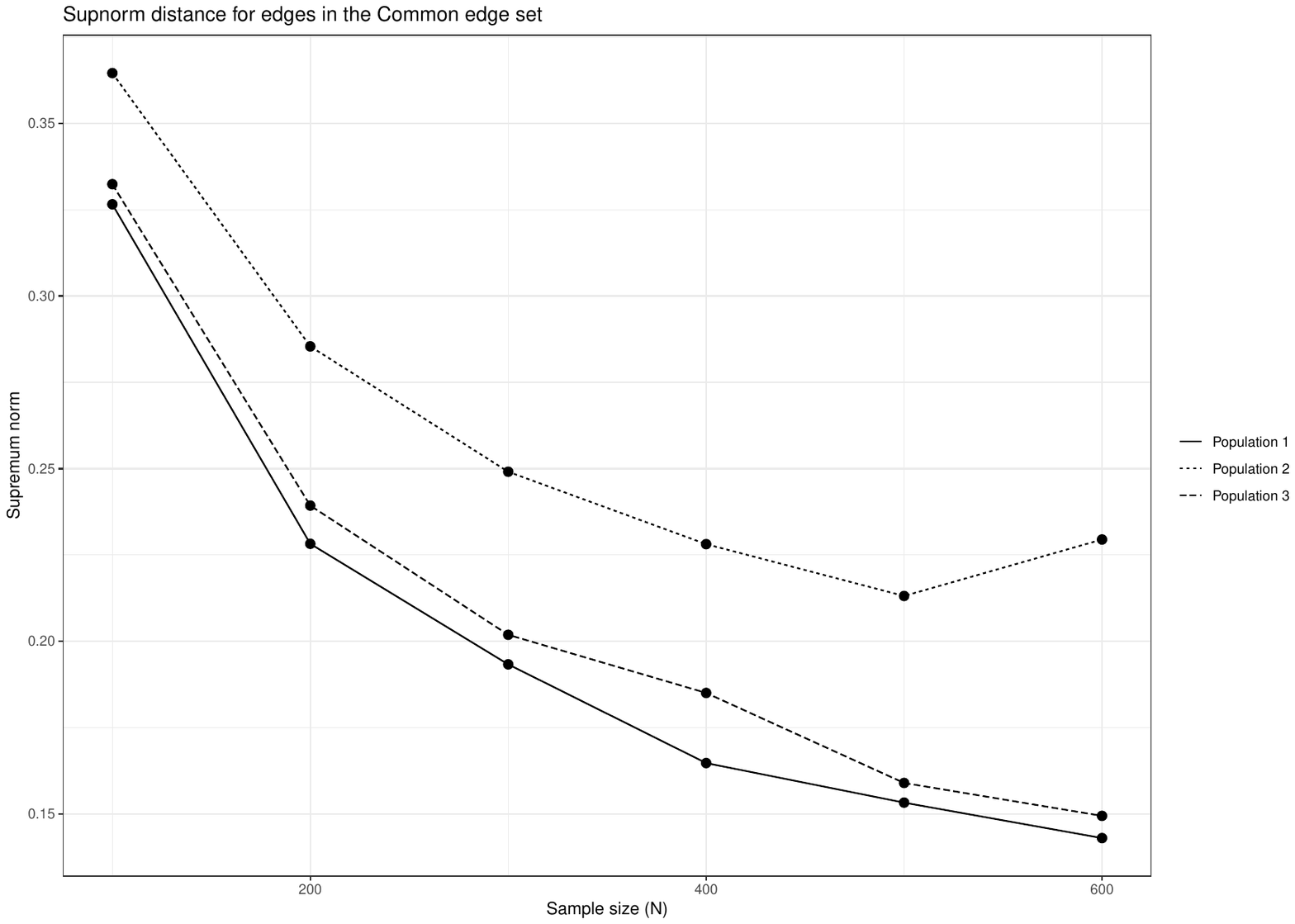}		
		}
		\caption{\label{Figure 8.2}Supnorm distance of the estimated precision matrices for each population
			for coordinates in $E_{common}$}
		
	\end{figure}

	\subsection{Application on Real-world datasets}\label{Section 4.3}
	\subsubsection{Dataset GSE25066}
	We consider the publicly available microarray gene expression dataset GSE25066 (\cite{hatzis2011genomic}), accessed from the NCBI GEO database. The data was generated using the Affymetrix Human Genome U133A microarray platform, which measures the expression	levels of over 22,000 probe sets corresponding to approximately 12,000 unique genes. The dataset comprises 508 breast cancer patients who received taxane-anthracycline based neoadjuvant chemotherapy prior to surgery. For each patient, Estrogen Receptor (ER) status is available and categorized as ER-positive or ER-negative determined by immunohistochemistry. This clinical classification provides two naturally and biologically motivated partition of the patient cohort into $L=2$ populations.
	
	The Mglasso requires that the precision matrices $\Omega_{l}=\Sigma_{l}^{-1},l=1,2$ share a	common sparsity structure encoded by the matrix $\Theta$ in the decomposition $\Omega_{l}=\Theta\odot\Gamma_{l}$, meaning the conditional dependency pattern is preserved across populations while interaction magnitudes may vary. To ensure this assumption is biologically defensible rather than merely imposed, we restrict variables to those in the KEGG Breast Cancer signalling pathway (hsa05224). This curated molecular pathway encodes the experimentally established molecular interactions that collectively govern the three hallmark processes driving breast cancer progression, namely uncontrolled cell proliferation mediated through the PI3K/AKT and RAS/MAPK signalling cascades involving genes such as ERBB2, EGFR, PIK3CA, and KRAS; evasion of apoptosis regulated through the TP53-BCL2-BAX axis; and tumour invasion and metastasis facilitated by CDH1, matrix metalloproteinases, and VEGF signalling. It also encompasses ER and Wnt/Notch signalling modules, making it well-suited for ER-stratified analysis. The Affymetrix probe identifiers are mapped to Entrez Gene IDs via hgu133a.db, intersected with the hsa05224 gene list retrieved through KEGGREST, and multi-probe genes are averaged using limma's avereps function, yielding a single representative measurement per gene. This results in a final expression matrix whose columns correspond to a set of $p=50$ genes, all of which participate in a single, well-characterised molecular network whose topology is determined by human biology rather than by the statistical properties of the observed data.
	
	The biological rationale for the shared sparsity assumption is as follows. The physical and biochemical mechanisms by which genes within the KEGG Breast Cancer pathway interact --- which transcription factors bind to which promoter regions, which kinases phosphorylate which substrates, and which proteins form regulatory complexes --- are determined by the molecular architecture of human cells and are invariant to the clinical subgroup to which a patient belongs. Whether a patient is ER-positive or ER-negative, does not alter the fundamental wiring of the pathway. What differs across subgroups is the intensity of specific interactions, reflecting the presence or absence of estrogen receptor signalling. This distinction maps precisely onto the structure of the model: $\Theta$ encodes the common network topology, while the population-specific $\Gamma_{l}$ capture the differential amplification or suppression of individual pathway edges in each subgroup. Consequently, the pathway-filtered dataset provides a principled and biologically interpretable setting in which to demonstrate the methodology, where the estimated $\hat{\Gamma}_{l}$ directly quantify which pathway interactions are upregulated or downregulated in each clinical subtype relative to the population average.
	
	We have $N_{1}=297$ and $N_{2}=205$ observations from Population 1 (ER-positive) and Population 2 (ER-negative), respectively. After normalizing the data, we estimate the precision matrices using the Mglasso with the penalty parameters $\lambda\propto L\sqrt{\log(p)/n}$ and $\mu\propto \sqrt{\log(p)/n}$. The predicted network is shown in Figure \ref{Figure 13}(left). The details of selected genes and a network summary are provided in the supplementary material.
	
	Because the true precision matrices are unknown, we use the whole-network estimates as proxies. To evaluate performance across varying sample sizes ($n=50,75,100,125,150,175,190$), we randomly select sub samples of $n$ observations from each population without replacement for $B=100$ replications. For each replication, we normalize the samples, estimate the precision matrices and measure the proportion of common edges with correctly detected signs and the average supremum norm distance from the proxy matrices. We then report the average of $B=100$ replications and report the result in  Table \ref{Table 1}. As expected, increasing the sample size leads to a higher proportion of correctly detected edge signs and a lower average supremum norm distance. A sample of the recovered common network with $N=190$ is shown in Figure \ref{Figure 13}(right).
	
	\begin{figure}
		
		\centering{
			\includegraphics[scale=0.3]{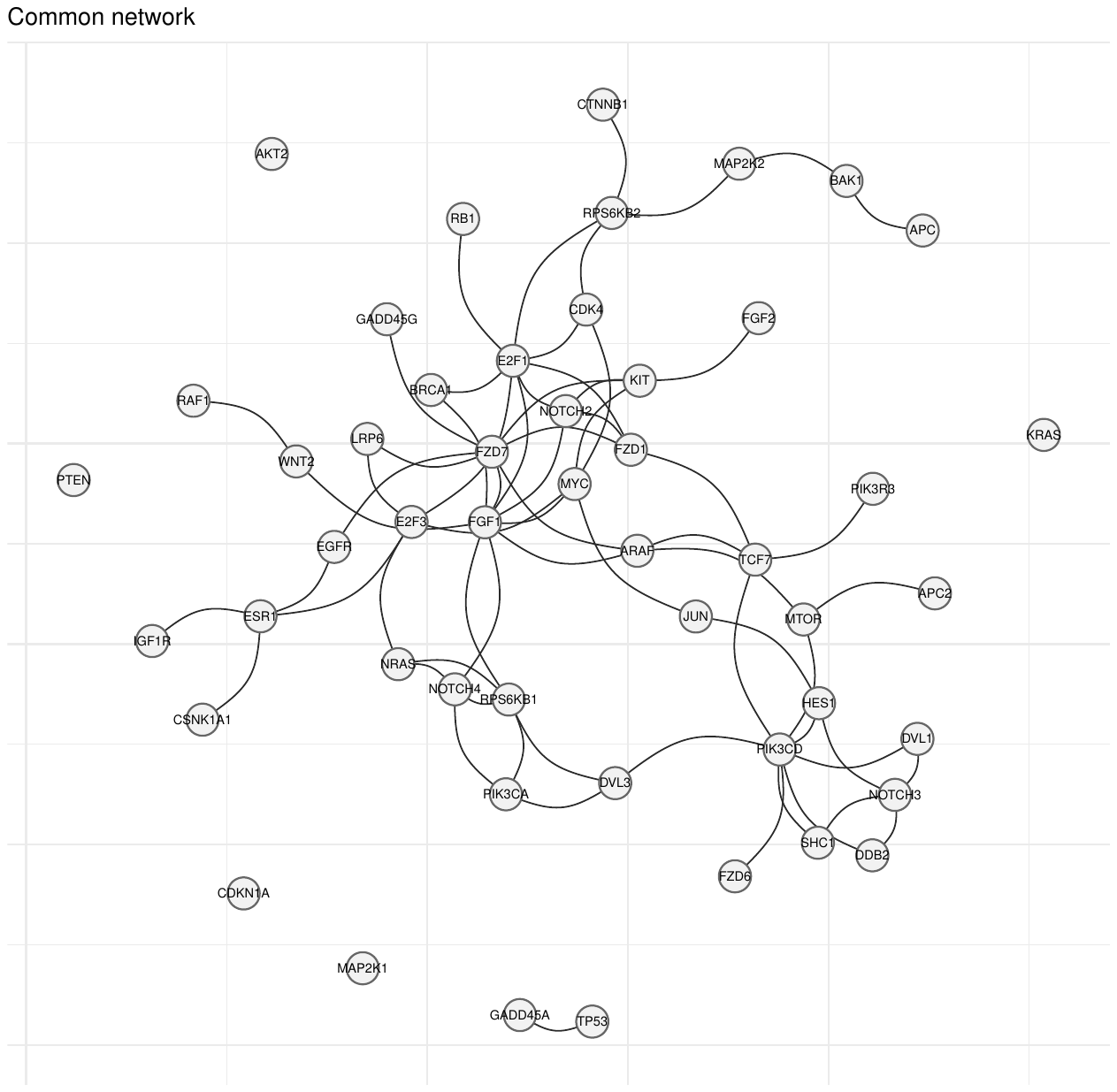}\includegraphics[scale=0.3]{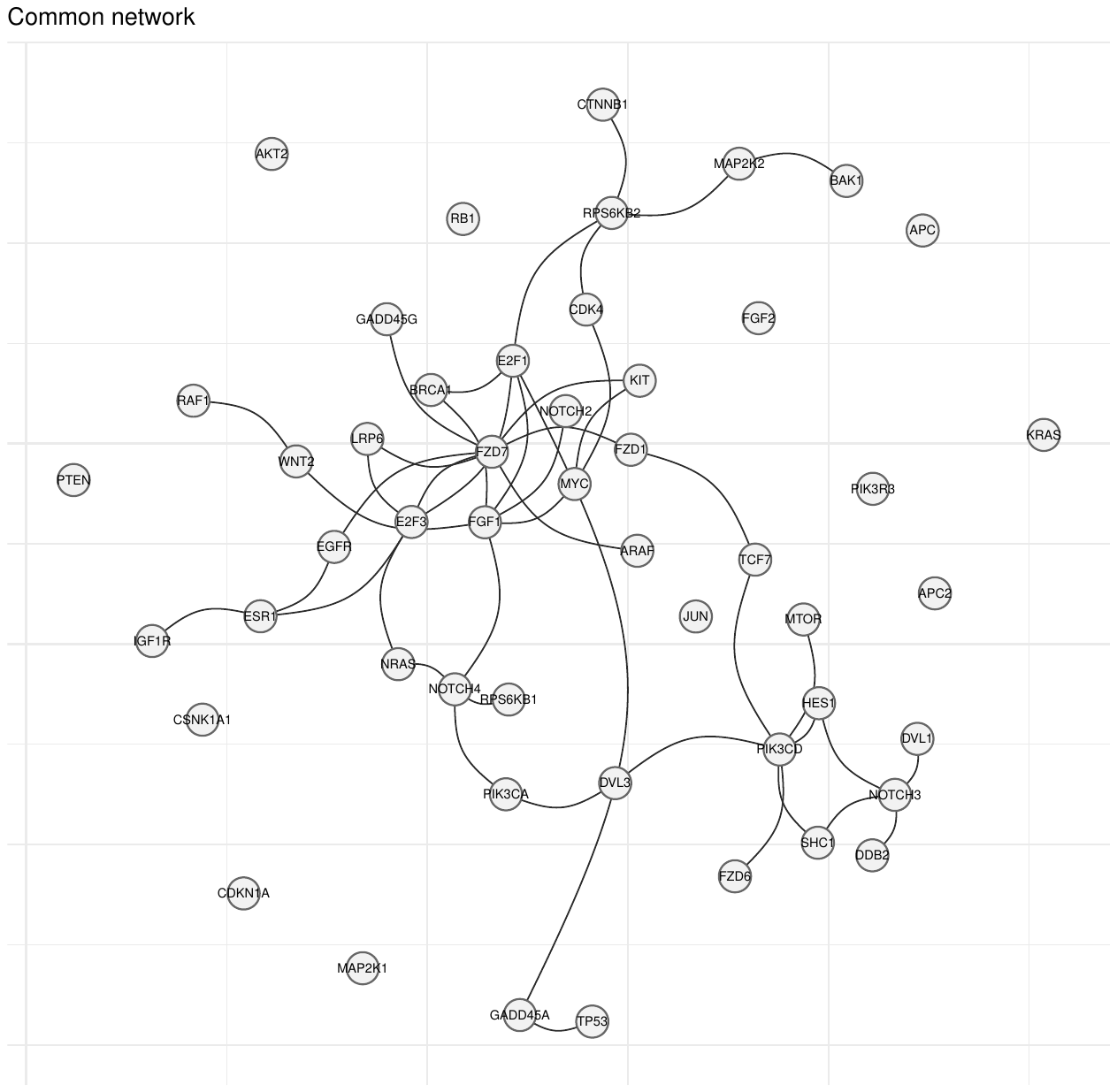}		
		}
		\caption{\label{Figure 13}Estimated common gene interaction network (left) recovered by the Mglasso across ER-positive and ER-negative breast cancer patient
			cohorts. Nodes represent the $p=50$ KEGG Breast Cancer pathway genes;
			edges encode shared conditional dependencies present in both subtypes. Recovered common network structure (Right) estimated from a random sub sample
			of $n=190$ observations per population.}
		
	\end{figure}
	\begin{table}[htbp]
		\centering
		\caption{Sign recovery rate and average supremum-norm error (separately for ER-positive and ER-negative precision matrices) for the Mglasso estimator as a function of subsample size $n$, based on $B=100$ random subsamples. All metrics are evaluated relative to full-data Mglasso estimates as proxy ground truth.}
		\label{Table 1}
		
		\begin{tabular}{lccccccc}
			\toprule
			Subsample size $(n)$
			& 50 & 75 & 100 & 125 & 150 & 175 & 190 \\
			\midrule
			
			Correctly signed estimated edges (\%)
			& 13.1 & 22.3 & 33.1 & 47.05 & 60.1 & 72.4 & 79.1 \\
			
			Average supnorm distance (ER-positive)
			& 0.345 & 0.283 & 0.239 & 0.199 & 0.168 & 0.137 & 0.117 \\
			
			Average supnorm distance (ER-negative)
			& 0.330 & 0.273 & 0.223 & 0.188 & 0.144 & 0.110 & 0.090 \\
			
			\bottomrule
		\end{tabular}
	\end{table}
	\subsubsection{WebKb dataset}
	For our empirical analysis, we utilize a well-known corpus of academic webpages collected from the computer science departments of four institutions: Cornell University, the University of Washington, the University of	Wisconsin, and the University of Texas. Initial text normalization	involved filtering out terms shorter than three characters, removing standard stopwords, and applying \href{https://tartarus.org/martin/PorterStemmer/}{the Porter algorithms} to reduce the remaining words to their base stems (further preprocessing details are given in \cite{2007:phd-Ana-Cardoso-Cachopo}).
	
	While the original corpus categorizes webpages into seven distinct groups-'other' (3764 pages), 'student' (1641), 'faculty' (1124), 'course' (930), 'project' (504), 'department' (182), and 'staff' (137)- our study isolates the two most heavily populated academic roles: students and faculty. We extracted a working subset totaling 1396 observations, specifically including $N_{1}=544$ student pages and $N_{2}=374$ faculty pages.
	
	To construct our quantitative feature space, we first identify the raw frequency of the $j^{th}$ term within the $i^{th}$ webpage, $f_{ij}$,$i=1,2,\ldots,n;j=1,2,\ldots,p$. The final data matrix is defined as $X=(x_{ij})$, applying a log-entropy transformation	such that $x_{ij}=e_{j}\log(1+f_{ij})$. In this context, $e_{j}$ represents the log-entropy weight of the $j^{th}$ term, computed as $e_{j}=1+\sum_{i=1}^{n}p_{ij}\log(p_{ij})/\log(n)$, where the term probability is $p_{ij}=f_{ij}/\sum_{i=1}^{n}f_{ij}$ (\cite{dumais1991improving}). Building upon the framework established in \cite{guo2011joint}, which modeled network structures using the 100 features with the highest log-entropy weights, we further select a subset of 50 terms for our	analysis. 
	
	Figure \ref{Figure 14} illustrates the estimated shared network structure after estimation of the precision matrices from the entire $N_{1}=544$ and $N_{2}=374$ observations. Under the absence of the true precision matrices, we used the predicted matrices from the entire dataset as a proxy to the true parameters. We focussed our investigation on evaluating	the fraction of shared edges with correctly identified signs across both populations and the average supremum norm distance of the estimated matrices from the proxy ones. We drew samples of sizes $n_{i}=\{N_{i}/2,2N_{i}/3,3N_{i}/4$ $,4N_{i}/5,5N_{i}/6,9N_{i}/10\},i=1,2$ without replacement, repeating this procedure for $B=100$ times. The result is reported in Table \ref{Table 2}.
	\begin{figure}
		
		\centering{
			\includegraphics[scale=0.47]{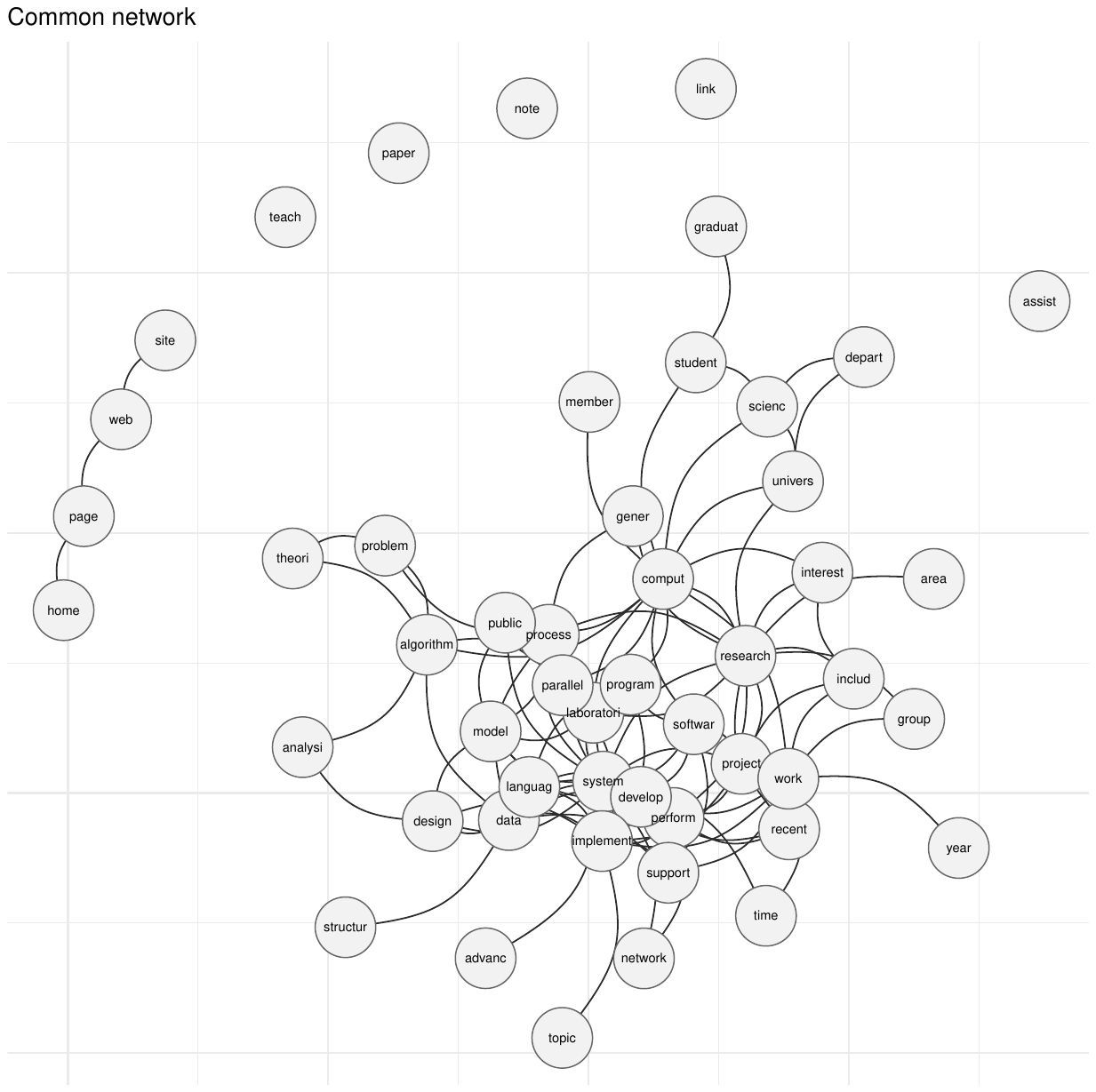}		
		}
		\caption{\label{Figure 14}Estimated common interaction network recovered by the Mglasso across the two populations, student and faculty. Nodes represent
			the $p=50$ terms}
		
	\end{figure}
	\begin{table}[htbp]
		\centering
		\caption{Sign recovery rate and average supremum-norm estimation error for
			the Mglasso estimator as a function of subsample
			size $n$, evaluated over $B=100$ random subsamples. Results are
			reported separately for Student and Faculty precision matrix estimates,
			with the full-data estimates used as the proxy ground truth.}
		\label{Table 2}
		
		\begin{tabular}{lccccccc}
			\toprule
			Subsample size $(n)$
			& $N_{i}/2$ & $2N_{i}/3$ & $3N_{i}/4$ & $4N_{i}/5$ & $5N_{i}/6$ & $9N_{i}/10$ \\
			\midrule
			
			Correctly signed estimated edges (\%)
			& 52 & 67.4 & 75.5 & 79.4 & 82.1 & 88.8 \\
			
			Average supnorm distance (Student) & 0.68 & 0.58 & 0.55 & 0.53 & 0.52 & 0.48 \\
			
			Average supnorm distance (Faculty) & 0.4 & 0.38 & 0.36 & 0.35 & 0.33 & 0.31 \\
			
			\bottomrule
		\end{tabular}
	\end{table}
	\section{Conclusion}\label{Section 5}
	In this paper, we introduced the Multiplicative graphical lasso (Mglasso), a novel regularization-based approach designed to jointly estimate multiple high-dimensional precision matrices that share a common sparsity structure but exhibit population- specific edge strengths. By decomposing each precision matrix into the Schur-Haramard product of a shared structural matrix $\boldsymbol{\Theta}$, and a population-specific matrix $\boldsymbol{\Gamma}_{l}$, our method effectively isolates common conditional independence graphs while capturing distinct interaction variation across different populations.
	
	The optimization problem is solved via an ADMM algorithm integrated with gradient descent, for which we provided a detailed derivation and demonstrate empirical convergence. Tuning parameter selection is carried out via the Extended Bayesian Information Criterion, and the algorithm is initialized from individual graphical lasso estimates.
	
	Theoretically, we established rigorous high-dimensional consistency guarantees for the proposed estimator. By demonstrating the local strict convexity of the objective function, we derived supremum norm error bounds and exact support recovery under sub-Gaussian tail conditions, confirming the method's statistical reliability in identifying true network topologies.
	
	Simulation studies across chain and star graph structures confirm the theoretical predictions and demonstrate that the Mglasso achieves model consistency at substantially smaller sample sizes than the GGL across all dimensions considered. In terms of estimation accuracy, Mglasso maintains competitive supremum norm error to GGL on chain graphs and outperforms it markedly on star graphs. The practical utility of the method was further corroborated through real-world applications on the GSE25066 breast cancer and WebKb datasets. In the genomic analysis, the method successfully recovered biologically interpretable shared conditional dependencies across ER-positive and ER-negative subtypes, quantifying population-specific regulatory intensifications without forcibly altering the underlying, biologically grounded network topology.
	
	A noted limitation of the current algorithm is its reliance on the graphical lasso estimates; if these initialization estimates are sparser than the true network, the coordinate-wise updates may fail to recover the missing edges. Future research could explore alternative initialization strategies to mitigate this issue, as well as extend the framework to accommodate more flexible, non-Gaussian distributions and directed acyclic graphs.
	
	%
	%
	%
	%
	\phantomsection\label{supplementary-material}
	\bigskip
	
	\begin{center}
		
		{\large\bf SUPPLEMENTARY MATERIAL}
		
	\end{center}
	
	\subsection{Detailed derivation of Mglasso algorithm}\label{Appendix A}
	We discuss the detailed steps of the derivation of the Mglasso algorithm. The gradient of $\mathcal{L}_{\lambda,\mu,\delta}(\boldsymbol{\Theta},\boldsymbol{\mathcal{G}},\boldsymbol{R},\boldsymbol{B})$ with respect to $\boldsymbol{\Theta}$ is
	\begin{eqnarray}\label{(8)}
		\notag
		\nabla_{\boldsymbol{\Theta}}\mathcal{L}_{\lambda,\mu,\delta}(\boldsymbol{\Theta},\boldsymbol{\mathcal{G}},\boldsymbol{R},\boldsymbol{B}) &=& -\sum_{l=1}^{L}\boldsymbol{\Gamma}_{l}\odot(\boldsymbol{\Theta}\odot\boldsymbol{\Gamma}_{l})^{-1}+\sum_{l=1}^{L}\boldsymbol{S}_{l}\odot\boldsymbol{\Gamma}_{l}+\boldsymbol{B}+\frac{1}{\delta}(\boldsymbol{\Theta}-\boldsymbol{R})\\
		& & -2\mu\sum_{l=1}^{L}(\boldsymbol{\Gamma}_{l}-\overline{\boldsymbol{\Gamma}}(\boldsymbol{\Theta}))\odot \nabla_{\boldsymbol{\Theta}}\overline{\boldsymbol{\Gamma}}(\boldsymbol{\Theta}).
	\end{eqnarray}
	Considering $\overline{\boldsymbol{\Gamma}}(\boldsymbol{\Theta})$ as a function of $\boldsymbol{\Theta}$, we can approximate $(\overline{\boldsymbol{\Gamma}}(\boldsymbol{\Theta}))_{ij}=\mathbb{I}(|\Theta_{ij}|>0)\approx 1-exp(-c\Theta_{ij}^{2})$, for some $c>0$. Then
	$$
	\frac{d}{d\Theta_{ij}}(\overline{\boldsymbol{\Gamma}}(\boldsymbol{\Theta}))_{ij}\approx 2\ c\ \Theta_{ij}\ exp(-c\Theta_{ij}^{2}).
	$$
	Thus, we approximate $\nabla_{\boldsymbol{\Theta}}\overline{\boldsymbol{\Gamma}}(\boldsymbol{\Theta})\approx 2c\ \boldsymbol{\Theta}\odot f(\boldsymbol{\Theta})$, where $(f(\boldsymbol{\Theta}))_{ij}=exp(-c\Theta_{ij}^{2})$.
	\begin{Remark}
		Since we are approximating a discontinuous function $\mathbb{I}(|\Theta_{ij}|>0)$ with a continuous function $1-exp(-c\Theta_{ij}^{2})$, we do not expect $\overline{\boldsymbol{\Gamma}}(\boldsymbol{\Theta})$ to be too much volatile within each gradient descent step and fix $\overline{\boldsymbol{\Gamma}}(\boldsymbol{\Theta})$ to be at $\boldsymbol{\Theta}=\boldsymbol{\Theta}^{k-1}$, i.e., the value of $\boldsymbol{\Theta}$ at the previous iterated ADMM step.
	\end{Remark}
	This gradient equation shall be used in the Gradient descent method for obtaining $\boldsymbol{\Theta}$. For that, we update the value of $\boldsymbol{\Theta}$ as follows:
	\begin{eqnarray}\label{(9)}
		\notag
		\boldsymbol{\Theta}_{m}&=&\boldsymbol{\Theta}_{m-1}-t_m\nabla_{\boldsymbol{\Theta}_{m-1}}\mathcal{L}_{\lambda,\mu,\delta}(\boldsymbol{\Theta},\mathcal{G}^{k-1},\boldsymbol{R}^{k-1},\boldsymbol{B}^{k-1})\\
		\notag
		&=& \boldsymbol{\Theta}_{m-1}-t_m\bigg\{-\sum_{l=1}^{L}\boldsymbol{\Gamma}_{l}^{k-1}\odot(\boldsymbol{\Theta}_{m-1}\odot\boldsymbol{\Gamma}_{l}^{k-1})^{-1}+\sum_{l=1}^{L}\boldsymbol{S}_{l}\odot\boldsymbol{\Gamma}_{l}^{k-1}+\boldsymbol{B}^{k-1}\\
		& &+\frac{1}{\delta}(\boldsymbol{\Theta}_{m-1}-\boldsymbol{R}^{k-1})-2\mu \sum_{l=1}^{L}(\boldsymbol{\Gamma}_{l}^{k-1}-\overline{\boldsymbol{\Gamma}}^{k-1})\odot (2c\ \boldsymbol{\Theta}_{m-1}\odot f(\boldsymbol{\Theta}_{m-1}))\bigg\}. 
	\end{eqnarray}
	We stop at some $m$ and hence, $\boldsymbol{\Theta}^k=\boldsymbol{\Theta}_m$. We obtain $\boldsymbol{\overline{\Gamma}}^k$ from $\boldsymbol{\Theta}^k$, since $$((\boldsymbol{\overline{\Gamma}}^k))_{ij}=\begin{cases}
		1, & \text{if }\boldsymbol{\Theta}^{k}_{ij} \neq 0 \\
		0, & \text{if }\boldsymbol{\Theta}^{k}_{ij}= 0.
	\end{cases}$$
	The constraint $\frac{1}{L}\sum_{l=1}^{L}\boldsymbol{\Gamma}_l=\overline{\boldsymbol{\Gamma}}(\boldsymbol{\Theta})$ restricts us to solve the problem in parallel for $l=1,2,\ldots,L$. We define
	\begin{eqnarray}\label{(10)}
		\notag
		G_{l} &=& \nabla_{\boldsymbol{\Gamma}_{l}}\mathcal{L}_{\lambda,\mu,\delta}(\boldsymbol{\Theta},\boldsymbol{\mathcal{G}},\boldsymbol{R},\boldsymbol{B})\\
		&=& \boldsymbol{S}_{l}\odot\boldsymbol{\Theta}-\boldsymbol{\Theta}\odot (\boldsymbol{\Theta}\odot\boldsymbol{\Gamma}_{l})^{-1}+2\mu(\boldsymbol{\Gamma}_{l}-\overline{\boldsymbol{\Gamma}}),
	\end{eqnarray}
	and update
	\begin{equation}\label{(11)}
		\Tilde{\boldsymbol{\Gamma}}_{lm}=\Tilde{\boldsymbol{\Gamma}}_{l(m-1)}-s_mG_{l(m-1)},
	\end{equation}
	where $G_{l(m-1)}=\boldsymbol{S}_{l}\odot\boldsymbol{\Theta}^{k}-\boldsymbol{\Theta}^{k}\odot (\boldsymbol{\Theta}^{k}\odot\Tilde{\boldsymbol{\Gamma}}_{l(m-1)})^{-1}+2\mu(\Tilde{\boldsymbol{\Gamma}}_{l(m-1)}-\overline{\boldsymbol{\Gamma}}^{k})$. After some gradient descent steps, we set $\Tilde{\boldsymbol{\Gamma}}_{l}^{k}=\Tilde{\boldsymbol{\Gamma}}_{lm}$. However, $\Tilde{\boldsymbol{\Gamma}}_{l}^{k}$ does not satisfy the constraint $\frac{1}{L}\sum_{l=1}^{L}\Tilde{\boldsymbol{\Gamma}}_{l}^{k}=\overline{\boldsymbol{\Gamma}}^{k}$. For that, we define $\boldsymbol{\zeta}=\sum_{l=1}^{L}\Tilde{\boldsymbol{\Gamma}}_{l}^{k}-L\overline{\boldsymbol{\Gamma}}^{k}$, and then set $\boldsymbol{\Gamma}_{l}^{k}=\Tilde{\boldsymbol{\Gamma}}_{l}^{k}-\frac{1}{L}\boldsymbol{\zeta}$. Due to this update, the last term in (\ref{(9)}) does not contribute. Hence, rewriting the gradient descent equation for $\boldsymbol{\Theta}$, the gradient descent equation form (\ref{(9)}) simplifies to
	\begin{eqnarray}\label{(12)}
		\notag
		\boldsymbol{\Theta}_{m}&=&\boldsymbol{\Theta}_{m-1}-t_m\bigg\{-\sum_{l=1}^{L}\boldsymbol{\Gamma}_{l}^{k-1}\odot(\boldsymbol{\Theta}_{m-1}\odot\boldsymbol{\Gamma}_{l}^{k-1})^{-1}+\sum_{l=1}^{L}\boldsymbol{S}_{l}\odot\boldsymbol{\Gamma}_{l}^{k-1}\\
		& &+\boldsymbol{B}^{k-1}+\frac{1}{\delta}(\boldsymbol{\Theta}_{m-1}-\boldsymbol{R}^{k-1})\bigg\}.
	\end{eqnarray}
	For (\ref{(6)}), the gradient equation of $\mathcal{L}_{\lambda,\mu,\delta}(\boldsymbol{\Theta},\boldsymbol{\mathcal{G}},\boldsymbol{R},\boldsymbol{B})$ is
	\begin{eqnarray}\label{(13)}
		\notag
		\nabla_{\boldsymbol{R}}\mathcal{L}_{\lambda,\mu,\delta}(\boldsymbol{\Theta},\boldsymbol{\mathcal{G}},\boldsymbol{R},\boldsymbol{B})=\frac{1}{\delta}(\boldsymbol{R}-\boldsymbol{\Theta})-\boldsymbol{B}+\lambda\nabla_{\boldsymbol{R}}l_{1}(\boldsymbol{R})&=&0\\
		\implies \boldsymbol{R}-\boldsymbol{\Theta}-\delta\boldsymbol{B}+\lambda\nabla_{\boldsymbol{R}}l_{1}(\boldsymbol{R})&=&0
	\end{eqnarray}
	A closed form solution of (\ref{(13)}) is given by 
	\begin{equation}\label{(14)}
		\boldsymbol{R}_{ij}^k=\begin{cases}
			\boldsymbol{\Theta}_{ii}+\delta \boldsymbol{B}_{ii} &, i=j\\
			\boldsymbol{\Theta}_{ij}+\delta\boldsymbol{B}_{ij}-\lambda \delta &, \text{if } \boldsymbol{\Theta}_{ij}+\delta\boldsymbol{B}_{ij}>\lambda \delta, i\neq j \\
			\boldsymbol{\Theta}_{ij}+\delta\boldsymbol{B}_{ij}+\lambda \delta &, \text{if } \boldsymbol{\Theta}_{ij}+\delta\boldsymbol{B}_{ij}<-\lambda \delta, i\neq j \\
			0 &, \text{if } |\boldsymbol{\Theta}_{ij}+\delta \boldsymbol{B}_{ij}|\leq \lambda \delta, i\neq j
		\end{cases}.
	\end{equation}
	Hence, we have a closed-form solution of (\ref{(6)}) as
	\begin{equation}\label{(15)}
		\boldsymbol{R}_{ij}^k=\begin{cases}
			\boldsymbol{\Theta}^{k}_{ii}+\delta \boldsymbol{B}^{k-1}_{ii} &, i=j\\
			\boldsymbol{\Theta}^{k}_{ij}+\delta\boldsymbol{B}^{k-1}_{ij}-\lambda \delta &,\text{if }\boldsymbol{\Theta}^{k}_{ij}+\delta\boldsymbol{B}^{k-1}_{ij}>\lambda \delta, i\neq j \\
			\boldsymbol{\Theta}^{k}_{ij}+\delta\boldsymbol{B}^{k-1}_{ij}+\lambda \delta &, \text{if }\boldsymbol{\Theta}^{k}_{ij}+\delta\boldsymbol{B}^{k-1}_{ij}<-\lambda \delta, i\neq j \\
			0 &, \text{if } |\boldsymbol{\Theta}^{k}_{ij}+\delta \boldsymbol{B}^{k-1}_{ij}|\leq \lambda \delta, i\neq j
		\end{cases}.
	\end{equation}
	The proposed Mglasso estimator is computed using the following ADMM
	procedure.
	
	\bigskip
	
	\noindent
	\textbf{Algorithm 1. ADMM algorithm for Mglasso}
	
	\begin{enumerate}
		\item Initialize symmetric matrices $\boldsymbol{\Theta};\boldsymbol{\Gamma_l},l=1,2,\ldots,L;\boldsymbol{R};\boldsymbol{B}.$
		
		\item For $k=1,2,\ldots$, repeat until convergence:
		
		\begin{enumerate}
			
			\item Set
			\[
			\boldsymbol{\Theta}_{0}=\boldsymbol{\Theta}^{k-1}.
			\]
			
			Use backtracking search for $t_m$. Update
			\[
			\boldsymbol{\Theta}_{m}
			=
			\boldsymbol{\Theta}_{m-1}-t_m
			\Bigg[
			\sum_{l=1}^{L}
			\Big\{
			\boldsymbol{\Gamma}_{l}^{k-1}\odot \boldsymbol{S}_{l}-\boldsymbol{\Gamma}_{l}^{k-1}\odot ((\boldsymbol{\Theta}_{m-1}\odot \boldsymbol{\Gamma}_{l}^{k-1})^{-1})
			\Big\}
			+\boldsymbol{B}^{k-1}+\frac{1}{\delta}(\boldsymbol{\Theta}_{m-1}-\boldsymbol{R}^{k-1})
			\Bigg],
			\]
			Repeat for $m=1,2,3,\ldots$. Stop at some point. For symmetricity, set
			\[
			\boldsymbol{\Theta}^k=0.5*(\boldsymbol{\Theta}^m+(\boldsymbol{\Theta}^m)^{T}).
			\]
			\item From $\boldsymbol{\Theta}^k$, we get $\boldsymbol{\overline{\Gamma}}^{k}$, such that $\boldsymbol{\overline{\Gamma}}^{k}_{ij}=\mathbb{I}(|\boldsymbol{\Theta}^{k}_{ij}|>0)$.
			\item
			For each $l=1,\ldots,L$, initialize
			\[
			\Tilde{\boldsymbol{\Gamma}}_{l0}=\boldsymbol{\Gamma}_{l}^{k-1}
			\]
			Use backtracking search for $s_m$.Update
			\[
			\Tilde{\boldsymbol{\Gamma}}_{lm}
			=
			\Tilde{\boldsymbol{\Gamma}}_{l(m-1)}-s_m
			\Big[
			\boldsymbol{S}_{l}\odot\boldsymbol{\Theta}^{k}-\boldsymbol{\Theta}^{k}\odot ((\boldsymbol{\Theta}^{k}\odot \Tilde{\boldsymbol{\Gamma}}_{l(m-1)})^{-1})+2\mu (\Tilde{\boldsymbol{\Gamma}}_{l(m-1)}-\overline{\boldsymbol{\Gamma}}^{k})
			\Big],
			\]
			Repeat for $m=1,2,3,\ldots$. Stop at some point. For symmetricity, set
			\[
			\Tilde{\boldsymbol{\Gamma}}_{l}^{k}=0.5*(\Tilde{\boldsymbol{\Gamma}}_{lm}+\Tilde{\boldsymbol{\Gamma}}_{lm}^T)).
			\]
			
			\item Define $\boldsymbol{\zeta}=\sum_{l=1}^{L}\Tilde{\boldsymbol{\Gamma}}^{k}_{l}-L\overline{\boldsymbol{\Gamma}}^{k}$. Update
			\[
			\boldsymbol{\Gamma}_{l}^{k}=\Tilde{\boldsymbol{\Gamma}}^{k}_{l}-\frac{1}{L}\boldsymbol{\zeta},l=1,2,\ldots,L.
			\]
			
			Set $\boldsymbol{\Gamma}_{l,ij}^{k}=0$, whenever $\boldsymbol{\Theta}^{k}_{ij}=0,l=1,2,\ldots,L$.
			
			\item 	Update $\boldsymbol{R}^k$ via soft-thresholding:
			\begin{equation*}
				\boldsymbol{R}_{ij}^k=\begin{cases}
					\boldsymbol{\Theta}^{k}_{ii}+\delta \boldsymbol{B}^{k-1}_{ii} &, i=j\\
					\boldsymbol{\Theta}^{k}_{ij}+\delta\boldsymbol{B}^{k-1}_{ij}-\lambda \delta &, \text{if } \boldsymbol{\Theta}^{k}_{ij}+\delta\boldsymbol{B}^{k-1}_{ij}>\lambda \delta, i\neq j \\
					\boldsymbol{\Theta}^{k}_{ij}+\delta\boldsymbol{B}^{k-1}_{ij}+\lambda \delta &, \text{if }  \boldsymbol{\Theta}^{k}_{ij}+\delta\boldsymbol{B}^{k-1}_{ij}<-\lambda \delta, i\neq j \\
					0 &, \text{if } |\boldsymbol{\Theta}^{k}_{ij}+\delta \boldsymbol{B}^{k-1}_{ij}|\leq \lambda \delta, i\neq j
				\end{cases}
			\end{equation*}
			
			Symmetrize:
			\[
			\boldsymbol{R}^k=0.5*(\boldsymbol{R}^k+(\boldsymbol{R}^k)^{T}).
			\]
			
			\item Update the dual variable
			\[
			\boldsymbol{B}^k = \boldsymbol{B}^{k-1}+\frac{1}{\delta}(\boldsymbol{\Theta}^k-\boldsymbol{R}^k),
			\]
			and symmetrize
			\[
			\boldsymbol{B}^k=0.5*(\boldsymbol{B}^k+(\boldsymbol{B}^k)^{T}).
			\]
			
			\item Terminate the ADMM loop if
			$|\mathcal{L}_{\lambda,\mu,\delta}(\boldsymbol{\Theta}^{k},\boldsymbol{\mathcal{G}}^{k},\boldsymbol{R}^{k},\boldsymbol{B}^{k})-\mathcal{L}_{\lambda,\mu,\delta}(\boldsymbol{\Theta}^{k-1},\boldsymbol{\mathcal{G}}^{k-1},\boldsymbol{R}^{k-1},\boldsymbol{B}^{k-1})|\leq \varepsilon_{1}$, where $\varepsilon_{1}$ is some tolerance limit.
			
		\end{enumerate}
		
	\end{enumerate}
	\subsection{Proof of the theorems and relevant lemmas}
	\subsubsection{Proof of theorems}
	\begin{proof}[Proof of Theorem \ref{Theorem 1}]
		We want a uniform positive lower bound of the quadratic form of the Hessian of $Q$ at any point in $\mathcal{B}_{\infty}(\epsilon)$. Suppose we have an arbitrary perturbation set $\boldsymbol{U}=(\boldsymbol{U}_1,\boldsymbol{U}_2,\ldots,\boldsymbol{U}_{L})$, where $\boldsymbol{U}_l$ is of the same shape as $\boldsymbol{\Omega}_{l}$. The second directional derivative at any point $\{\boldsymbol{\Omega}_{l}\}$ in $\mathcal{B}_{\infty}(\epsilon)$ in the direction $\boldsymbol{U}_{l}$ decomposes as
		\begin{eqnarray}
			\label{(33)}
			\notag
			Q_{1}(\boldsymbol{U})&=&\sum_{l=1}^{L}tr(\boldsymbol{\Omega}_{l}^{-1}\boldsymbol{U}_{l}\boldsymbol{\Omega}_{l}^{-1}\boldsymbol{U}_{l})+\text{contribution from the penalty involving }\mu\\
			&=&T_1+T_2
		\end{eqnarray}
		\begin{enumerate}
			\item Curvature of $T_1$: For each $l=1,2,\ldots,L$,
			\begin{eqnarray*}
				tr(\boldsymbol{\Omega}_{l}^{-1}\boldsymbol{U}_{l}\boldsymbol{\Omega}_{l}^{-1}\boldsymbol{U}_{l})&=& ||\boldsymbol{\Omega}_{l}^{-1}\boldsymbol{U}_{l}||_{F}^{2}\\
				&=& ||vec(\boldsymbol{\Omega}_{l}^{-1}\boldsymbol{U}_{l})||_{2}^{2}\\
				&=&||(I\otimes\boldsymbol{\Omega}_{l}^{-1})vec(\boldsymbol{U}_{l})||_{2}^{2}\\
				&=&||\boldsymbol{Av}||_{2}^{2},
			\end{eqnarray*}
			where $\boldsymbol{A}=I\otimes\boldsymbol{\Omega}_{l}^{-1}$ and $\boldsymbol{v}=vec(\boldsymbol{U}_{l})$. Let the SVD of $\boldsymbol{A}$ be $\boldsymbol{V_1}\boldsymbol{D}\boldsymbol{V_2}^{T}$. Then,
			\begin{eqnarray*}
				||\boldsymbol{Av}||_{2}&=&\sqrt{(\boldsymbol{D}\boldsymbol{V_2}^{T}\boldsymbol{v})^{T}\boldsymbol{V_1}^{T}\boldsymbol{V_1}(\boldsymbol{D}\boldsymbol{V_2}^{T}\boldsymbol{v})}\\
				&=&||\boldsymbol{D}\boldsymbol{V_2}^{T}\boldsymbol{v}||_{2}\\
				&=&\sqrt{\sum_{i=1}^{p^2}D_{i}^{2}w_{i}^{2}},
			\end{eqnarray*}
			where $\boldsymbol{w}=\boldsymbol{V_2}^{T}\boldsymbol{v}$. Then, $||\boldsymbol{Av}||_{2}^{2}\geq \Lambda^{2}_{min}(\boldsymbol{A})||\boldsymbol{w}||_{2}^{2}=\Lambda^{2}_{min}(\boldsymbol{A})||\boldsymbol{v}||_{2}^{2}$. Hence,
			\begin{eqnarray*}
				tr(\boldsymbol{\Omega}_{l}^{-1}\boldsymbol{U}_{l}\boldsymbol{\Omega}_{l}^{-1}\boldsymbol{U}_{l})&\geq& \Lambda^{2}_{min}(I\otimes\boldsymbol{\Omega}_{l}^{-1})||\boldsymbol{U}_{l}||_{F}^{2}\\
				&=&  \Lambda^{2}_{min}(\boldsymbol{\Omega}_{l}^{-1})||\boldsymbol{U}_{l}||_{F}^{2}\\
				&=& \Lambda^{-2}_{max}(\boldsymbol{\Omega}_{l})||\boldsymbol{U}_{l}||_{F}^{2}.
			\end{eqnarray*}
			Now, from Ger\v{s}gorin Circle Theorem (Theorem 6.1.1 from \cite{horn2012matrix}) and (\ref{((32))}), we have
			\begin{equation}
				\label{(34)}
				\underset{i}{max}|\Lambda_{i}(\boldsymbol{\Omega}_{l}-\boldsymbol{\Omega}_{l}^{0})|\leq |||\boldsymbol{\Omega}_{l}-\boldsymbol{\Omega}_{l}^{0}|||_{\infty}\leq d\epsilon.
			\end{equation}
			Also, from Weyl's inequality (Theorem 4.3.1 from \cite{horn2012matrix}), we have
			\begin{equation}
				\label{(35)}
				\Lambda_{max}(\boldsymbol{\Omega}_{l})\leq  |||\boldsymbol{\Omega}_{l}-\boldsymbol{\Omega}_{l}^{0}|||_{\infty}+max(|\Lambda_{min}(\boldsymbol{\Omega}_{l}^{0})|,|\Lambda_{max}(\boldsymbol{\Omega}_{l}^{0})|).
			\end{equation}
			Then from Assumption \ref{Assumption 3}, we have 
			\begin{eqnarray}
				\notag
				\Lambda_{max}(\boldsymbol{\Omega}_{l}) &\leq& d\epsilon+\frac{1}{L_1}\\
				\label{(36)}
				\implies \Lambda^{-2}_{max}(\boldsymbol{\Omega}_{l}) &\geq &\bigg(\frac{1}{L_1}+d\epsilon\bigg)^{-2}\\
				\label{(37)}
				\implies tr(\boldsymbol{\Omega}_{l}^{-1}\boldsymbol{U}_{l}\boldsymbol{\Omega}_{l}^{-1}\boldsymbol{U}_{l})&\geq& \bigg(\frac{1}{L_1}+d\epsilon\bigg)^{-2}||\boldsymbol{U}_{l}||_{F}^{2}.
			\end{eqnarray}
			Hence, we have
			\begin{equation}
				\sum_{l=1}^{L} tr(\boldsymbol{\Omega}_{l}^{-1}\boldsymbol{U}_{l}\boldsymbol{\Omega}_{l}^{-1}\boldsymbol{U}_{l})\geq \bigg(\frac{1}{L_1}+d\epsilon\bigg)^{-2} \sum_{l=1}^{L}||\boldsymbol{U}_{l}||_{F}^{2}.
				\label{(38)}
			\end{equation}
			\item Contribution from $\mu$ penalty: Fix any $(i,j)\in E$. Let $\boldsymbol{m}_{ij}=(m_{1,ij},m_{2,ij},\ldots,m_{L,ij})$, where $m_{l,ij}=\Omega_{l,ij}$ and $\overline{m}_{ij}=\frac{1}{L}\sum_{l=1}^{L}m_{l,ij}=\overline{\Omega}_{ij}$. The per-entry penalty is
			\begin{equation}
				p_{ij}(\boldsymbol{m}_{ij})=\mu L^{2} \frac{\boldsymbol{m}^{T}_{ij}\boldsymbol{m}_{ij}}{(\boldsymbol{1}^{T}\boldsymbol{m}_{ij})^2}-\mu L.
				\label{(39)}
			\end{equation}
			This suffices to differentiate the scalar function $g(\boldsymbol{m}_{ij})=\frac{\boldsymbol{m}^{T}_{ij}\boldsymbol{m}_{ij}}{(\boldsymbol{1}^{T}\boldsymbol{m}_{ij})^2}$. With a small perturbation direction $\boldsymbol{u}_{ij}=(U_{1,ij},U_{2,ij},\ldots,U_{L,ij})'\in \mathbb{R}^{L}$, corresponding to the entries of $\boldsymbol{U}$. Consider $\boldsymbol{m}_{ij}(t)=\boldsymbol{m}_{ij}+t\boldsymbol{u}_{ij}$ and $h_{ij}(t)=g(\boldsymbol{m}_{ij}(t))$. Define $a_{ij}(t)=\boldsymbol{m}^{T}_{ij}(t)\boldsymbol{m}_{ij}(t)=a_{ij}+2t\boldsymbol{m}^{T}_{ij}\boldsymbol{u}_{ij}+t^2\boldsymbol{u}^{T}_{ij}\boldsymbol{u}_{ij}$, where $a_{ij}=\boldsymbol{m}^{T}_{ij}\boldsymbol{m}_{ij}$. Note that $a_{ij}(0)=a_{ij},\ a'_{ij}(0)=2\boldsymbol{m}^{T}_{ij}\boldsymbol{u}_{ij},\ a''_{ij}(0)=\boldsymbol{u}^{T}_{ij}\boldsymbol{u}_{ij}$. Also, let $s_{ij}=\boldsymbol{1}^{T}\boldsymbol{m}_{ij}$. Then, $s_{ij}(t)=\boldsymbol{1}^{T}\boldsymbol{m}_{ij}(t)=s_{ij}+tr_{ij}$, with $r_{ij}=\boldsymbol{1}^{T}\boldsymbol{u}_{ij}$. Then $s_{ij}(0)=s,\ s'_{ij}(0)=r_{ij},\ s''_{ij}(0)=0$. Thus,
			\begin{eqnarray}
				\label{(40)}
				\notag
				h''(0) &=& 2\frac{\boldsymbol{u}^{T}_{ij}\boldsymbol{u}_{ij}}{s_{ij}^2}-\frac{8r_{ij}(\boldsymbol{m}^{T}_{ij}\boldsymbol{u}_{ij})}{s_{ij}^3}+\frac{6a_{ij}r_{ij}^2}{s_{ij}^4}\\
				\notag
				\implies \frac{d^2}{dt^2}p_{ij}(\boldsymbol{m}_{ij}+t\boldsymbol{u}_{ij})\bigg|_{t=0}&=& \mu L^2 h''(0)\\
				&=& \mu L^2 \bigg(2\frac{\boldsymbol{u}^{T}_{ij}\boldsymbol{u}_{ij}}{s_{ij}^2}-\frac{8r_{ij}(\boldsymbol{m}^{T}_{ij}\boldsymbol{u}_{ij})}{s_{ij}^3}+\frac{6a_{ij}r_{ij}^2}{s_{ij}^4}\bigg).
			\end{eqnarray}
			We form an upper bound proportional to $||\boldsymbol{u}_{ij}||_{2}^{2}=\sum_{l=1}^{L}u_{l,ij}^{2}$. From the Cauchy-Schwarz inequality, we have
			\begin{itemize}
				\item $|r_{ij}|=|\boldsymbol{1}^{T}\boldsymbol{u}_{ij}|\leq \sqrt{L}||\boldsymbol{u}_{ij}||_{2}$,
				\item $|\boldsymbol{m}^{T}_{ij}\boldsymbol{u}_{ij}|\leq ||\boldsymbol{m}_{ij}||_{2}||\boldsymbol{u}_{ij}||_{2}$,
				\item $a_{ij}=\boldsymbol{m}^{T}_{ij}\boldsymbol{m}_{ij}=||\boldsymbol{m}_{ij}||_{2}^{2}$.
			\end{itemize}
			From these inequalities, we have,
			\begin{equation}
				\label{(41)}
				\bigg| \frac{d^2}{dt^2}p_{ij}(\boldsymbol{m}_{ij}+t\boldsymbol{u}_{ij})\bigg|_{t=0}\bigg|\leq \mu \frac{2\overline{m}_{ij}^{2}+6\frac{1}{L}||\boldsymbol{m}_{ij}||_{2}^{2}+8\frac{1}{\sqrt{L}}||\boldsymbol{m}_{ij}||_{2}|\overline{m}_{ij}|}{(\overline{m}_{ij})^{4}}||\boldsymbol{u}_{ij}||_{2}^{2}.
			\end{equation}
			In $\mathcal{B}_{\infty}(\epsilon)$, we have $|\Omega_{l,ij}-\Omega_{l,ij}^{0}|\leq \epsilon$, for all $l=1,2,\ldots, L$, and for all $(i,j)$.
			\begin{itemize}
				\item Then,
				\begin{eqnarray*}
					|\overline{m}_{ij}| &=& \bigg|\frac{1}{L}\sum_{l=1}^{L}m_{l,ij}\bigg|\\
					&\leq& \frac{1}{L}\sum_{l=1}^{L}|m_{l,ij}|\\
					&\leq& \frac{1}{L}\sum_{l=1}^{L}(|\Omega^{0}_{l,ij}|+\epsilon)\\
					&\leq & \underset{l}{max}\ |\Omega_{l,ij}^{0}|+\epsilon\\
					\implies (\overline{m}_{ij})^2 &\leq& (\underset{l}{max}\ |\Omega_{l,ij}^{0}|+\epsilon)^2.
				\end{eqnarray*}
				\item Also,
				\begin{eqnarray*}
					||\boldsymbol{m}_{ij}||_{2} &=&       \sqrt{\sum_{l=1}^{L}m_{l,ij}^{2}}\\
					&\leq& \sqrt{\sum_{l=1}^{L}(|\Omega_{l,ij}^{0}|+\epsilon)^{2}}\\
					\implies\frac{1}{\sqrt{L}}||\boldsymbol{m}_{ij}||_{2} &\leq & \sqrt{\frac{1}{L}\sum_{l=1}^{L}(|\Omega_{l,ij}^{0}|+\epsilon)^{2}}
				\end{eqnarray*}
				\item Hence,
				\begin{eqnarray*}
					\frac{1}{L}||\boldsymbol{m}_{ij}||_{2}|\overline{m}_{ij}|&\leq& \sqrt{\frac{1}{L}\sum_{l=1}^{L}(|\Omega_{l,ij}^{0}|+\epsilon)^{2}}(\underset{l}{max}\ |\Omega_{l,ij}^{0}|+\epsilon)\\
					&\leq& \sqrt{\underset{l}{max}(|\Omega_{l,ij}^{0}|+\epsilon)^{2}}(\underset{l}{max}\ |\Omega_{l,ij}^{0}|+\epsilon)\\
					&=& (\underset{l}{max}\ |\Omega_{l,ij}^{0}|+\epsilon)^{2}.
				\end{eqnarray*}
				\item Again,
				\begin{eqnarray*}
					\frac{1}{L}||\boldsymbol{m}_{ij}||_{2}^{2} &=& \frac{1}{L}\sum_{l=1}^{L}m_{l,ij}^{2}\\
					&\leq & \underset{l}{max}(|\Omega_{l,ij}^{0}|+\epsilon)^{2}\\
					&=& (\underset{l}{max}\ |\Omega_{l,ij}^{0}|+\epsilon)^{2}.
				\end{eqnarray*}
			\end{itemize}
			Using the above inequalities, we have
			\begin{equation}
				\bigg| \frac{d^2}{dt^2}p_{ij}(\boldsymbol{m}_{ij}+t\boldsymbol{u}_{ij})\bigg|_{t=0}\bigg| \leq \frac{16\mu (\underset{l}{max}\ |\Omega_{l,ij}^{0}|+\epsilon)^{2}}{(\overline{m}_{ij})^{4}}||\boldsymbol{u}_{ij}||_{2}^{2}.
				\label{(42)}
			\end{equation}
			From $\mathcal{B}_{\infty}(\epsilon)$ ball, if $|\overline{\Omega}_{ij}^{0}|>\epsilon$, we have $|\overline{m}_{ij}|>|\overline{\Omega}_{ij}^{0}|-\epsilon$, which implies $(\overline{m}_{ij})^{4}>(|\overline{\Omega}_{ij}^{0}|-\epsilon)^{4}\geq \underset{(i,j)\in E}{min}(|\overline{\Omega}_{ij}^{0}|-\epsilon)^{4}$. Then,
			\begin{equation}
				\label{(43)}
				\sum_{i,j} \bigg| \frac{d^2}{dt^2}p_{ij}(\boldsymbol{m}_{ij}+t\boldsymbol{u}_{ij})\bigg|_{t=0}\bigg| \leq 16\mu \frac{(\underset{l}{max}\ |\Omega_{l,ij}^{0}|+\epsilon)^{2}}{\underset{(i,j)\in E}{min}(|\overline{\Omega}_{ij}^{0}|-\epsilon)^{4}} \sum_{l=1}^{L}||\boldsymbol{U}_{l}||_{F}^{2}.
			\end{equation}
		\end{enumerate}
		Thus, for all non-zero $\boldsymbol{U}$, we have $Q_1(\boldsymbol{U})>0$ if 
		\begin{equation}
			\label{(44)}
			0< \mu <\frac{\underset{(i,j)\in E}{min}(|\overline{\Omega}_{ij}^{0}|-\epsilon)^{4}}{16\ \underset{(i,j)}{max}(\underset{l}{max}\ |\Omega_{l,ij}^{0}|+\epsilon)^{2}}\bigg(\frac{1}{L_1}+d\epsilon\bigg)^{-2}.
		\end{equation}
		Hence, under Assumption \ref{Assumption 3}, if $\underset{(i,j)\in E}{min}|\overline{\Omega}_{ij}^{0}|>\epsilon$, and
		
		\begin{equation}
			\label{(45)}
			0<\mu< \frac{(\underset{(i,j)\in E}{min}|\overline{\Omega}_{ij}^{0}|-\epsilon)^{4}}{16\ \underset{(i,j)}{max}(\underset{l}{max}\ |\Omega_{l,ij}^{0}|+\epsilon)^{2}}\bigg(\frac{1}{L_1}+d\epsilon\bigg)^{-2}=K(\boldsymbol{\Omega}^{0},L_1,\epsilon,d),
		\end{equation}
		$Q(\boldsymbol{\Omega})$ is strictly convex in $\mathcal{B}_{\infty}(\epsilon)$.
	\end{proof}
	\begin{proof}[Proof of Theorem \ref{Theorem 2}]
		We prove the theorem using the primal-dual witness method, which involves a particular order of steps (see \cite{wainwright2006sharp}). We follow the steps to construct $(\tilde{\boldsymbol{\Omega}}_{l},\tilde{\boldsymbol{Z}}_{l},\tilde{\boldsymbol{M}}_{l};l=1,2,\ldots, L)$. The solutions are expected to satisfy the optimal conditions associated with the optimization problem (\ref{(27)}) in $\mathcal{B}_{\infty}(C_1/d)$ with high probability. If the method succeeds, then $\tilde{\boldsymbol{\Omega}}_{l}$ is equal to the unique solution $\hat{\boldsymbol{\Omega}}_{l}$. 
		
		The KKT conditions corresponding to the optimization problem for (\ref{(27)}) for $(i,j)\in E$ are
		\begin{equation}\label{(50)}
			(\boldsymbol{S}_l-\boldsymbol{\Omega}_{l}^{-1})_{ij}+\frac{\lambda}{L}Z_{l,ij}+\mu M_{l,ij}=0,
		\end{equation}
		while for $(i,j) \in E^c$, the KKT conditions are
		\begin{equation}\label{(51)}
			(\boldsymbol{S}_l-\boldsymbol{\Omega}_{l}^{-1})_{ij}+\frac{\lambda}{L}Z_{l,ij}=0.
		\end{equation}
		Here 
		\begin{equation}
			\label{(52)}
			Z_{l,ij}=\begin{cases}
				sign(\overline{\Omega}_{ij})&, \text{if }\overline{\Omega}_{ij}\neq 0,i\neq j\\
				[-1,1]&, \text{if }\overline{\Omega}_{ij}= 0,i\neq j\\
				0 &, \text{if }i= j\\
			\end{cases}
		\end{equation}
		and, 
		\begin{equation}
			\label{(53)}
			M_{l,ij}=\frac{2(\Omega_{l,ij}-\overline{\Omega}_{ij})}{(\overline{\Omega}_{ij})^{2}}-\frac{2}{L}\frac{\sum_{k}(\Omega_{k,ij}-\overline{\Omega}_{ij})^2}{(\overline{\Omega}_{ij})^{3}}.
		\end{equation}
		We then construct the primal-dual witness solution as follows:
		\begin{itemize}
			\item Obtain the primal solution as
			\begin{equation}\label{(54)}
				\{\Tilde{\boldsymbol{\Omega}}_l,l=1,2,\ldots,L\}=\underset{\{\boldsymbol{\Omega}_{l}\succ 0,\boldsymbol{\Omega}_{l(E^c)}=0,||\boldsymbol{\Omega}_{l}-\boldsymbol{\Omega}_{l}^{0}||_{\infty}\leq C_1/d\}}{arg\ min}\ Q(\boldsymbol{\Omega}).
			\end{equation}
			\item For $(i,j)\in E$, we obtain $\tilde{Z}_{l,ij}$ and $\tilde{M}_{l,ij}$ from (\ref{(52)}) and (\ref{(53)}).
			\item For $(i,j)\in E^c$, define $\tilde{Z}_{l,ij}=\frac{L}{\lambda}(\tilde{\boldsymbol{\Omega}}_{l}^{-1}-\boldsymbol{S}_l)_{ij}$.
		\end{itemize}
		The primal-dual witness approach only succeeds if the strict dual feasibility conditions are met by the primal solution,i.e., we have to show $|\Tilde{Z}_{l,ij}|<1,\forall(i,j)\in E^c, l=1,2,\ldots,L$. Define the following notations:
		\begin{enumerate}
			\item $\boldsymbol{\Delta}_{l}=\Tilde{\boldsymbol{\Omega}}_{l}-\boldsymbol{\Omega}_{l}^{0}$: Difference between primal solutions and true parameter. Note that $\boldsymbol{\Delta}_{l(E^c)}=0,l=1,2,\ldots,L$.
			\item $\boldsymbol{W}_{l}=\boldsymbol{S}_{l}-\boldsymbol{\Sigma}_{l}$: Difference between true covariance matrix and sample covariance matrix.
			\item $R(\boldsymbol{\Delta}_{l})=\boldsymbol{R}_{l}= (\Tilde{\boldsymbol{\Omega}}_{l})^{-1}-(\boldsymbol{\Omega}_{l}^{0})^{-1}+(\boldsymbol{\Omega}_{l}^{0})^{-1}\ \boldsymbol{\Delta}_{l}\ (\boldsymbol{\Omega}_{l}^{0})^{-1}$: Difference between the gradient of $\log\ \det(A)$ at $A=\Tilde{\boldsymbol{\Omega}}_{l}$ and its first-order Taylor expansion around $\boldsymbol{\Omega}_{l}^{0}$.
		\end{enumerate}
		Using $\boldsymbol{\Delta}_{l},\boldsymbol{W}_{l}$ and $\boldsymbol{R}_{l}$, stacking the components into a column vector and disjointly decomposing the coordinates into $E$ and $E^c$, we have
		\begin{eqnarray}
			\label{(55)}
			\eta_{lEE}^{0}vec(\boldsymbol{\Delta}_{l(E)})+vec(\boldsymbol{W}_{l(E)})-vec(\boldsymbol{R}_{l(E)})+\frac{\lambda}{L}vec(\tilde{\boldsymbol{Z}}_{l(E)})+\mu vec(\tilde{\boldsymbol{M}}_{l(E)})&=&0\\
			\label{(56)}
			\eta_{lE^{c}E}^{0}vec(\boldsymbol{\Delta}_{l(E)})+vec(\boldsymbol{W}_{l(E^{c})})-vec(\boldsymbol{R}_{l(E^{c})})+\frac{\lambda}{L}vec(\tilde{\boldsymbol{Z}}_{l(E^{c})})&=&0.
		\end{eqnarray}
		From (\ref{(55)}), we have
		\begin{equation}\label{(57)}
			vec(\boldsymbol{\Delta}_{l(E)})=(\eta_{lEE}^{0})^{-1}\bigg(vec(\boldsymbol{R}_{l(E)})-vec(\boldsymbol{W}_{l(E)})-\frac{\lambda}{L}vec(\tilde{\boldsymbol{Z}}_{l(E)})-\mu vec(\tilde{\boldsymbol{M}}_{l(E)})\bigg).
		\end{equation}
		Substituting (\ref{(57)}) into (\ref{(56)}), we get,
		\begin{eqnarray}
			\label{(58)}
			\notag
			0&=&\eta_{lE^{c}E}^{0}\Bigg[(\eta_{lEE}^{0})^{-1}\bigg(vec(\boldsymbol{R}_{l(E)})-vec(\boldsymbol{W}_{l(E)})-\frac{\lambda}{L}vec(\tilde{\boldsymbol{Z}}_{l(E)})-\mu vec(\tilde{\boldsymbol{M}}_{l(E)})\bigg)\Bigg]\\
			& &+vec(\boldsymbol{W}_{l(E^{c})})-vec(\boldsymbol{R}_{l(E^{c})})+\frac{\lambda}{L}vec(\tilde{\boldsymbol{Z}}_{l(E^{c})})
		\end{eqnarray}
		\begin{eqnarray}
			\label{(59)}
			\notag
			\implies \frac{\lambda}{L}vec(\tilde{\boldsymbol{Z}}_{l(E^{c})}) &=& \eta_{lE^{c}E}^{0} (\eta_{lEE}^{0})^{-1} (vec(\boldsymbol{W}_{l(E)})-vec(\boldsymbol{R}_{l(E)}))-vec(\boldsymbol{W}_{l(E^{c})})+vec(\boldsymbol{R}_{l(E^{c})})\\
			& &+ \eta_{lE^{c}E}^{0} (\eta_{lEE}^{0})^{-1}\bigg(\frac{\lambda}{L}vec(\tilde{\boldsymbol{Z}}_{l(E)})+\mu vec(\tilde{\boldsymbol{M}}_{l(E)})\bigg) 
		\end{eqnarray}
		Hence, we have 
		\begin{eqnarray}\label{(60)}
			\notag
			\frac{\lambda}{L}||\tilde{\boldsymbol{Z}}_{l(E^{c})}||_{\infty}&\leq &|||\eta_{lE^{c}E}^{0} (\eta_{lEE}^{0})^{-1}|||_{\infty}(||\boldsymbol{W}_{l}||_{\infty}+||\boldsymbol{R}_{l}||_{\infty})+(||\boldsymbol{W}_{l}||_{\infty}+||\boldsymbol{R}_{l}||_{\infty})\\
			& &+|||\eta_{lE^{c}E}^{0} (\eta_{lEE}^{0})^{-1}|||_{\infty}\bigg(\frac{\lambda}{L}+\mu ||\tilde{\boldsymbol{M}}_{l(E)}||_{\infty}\bigg).
		\end{eqnarray}
		Using irrepresentability conditions (Assumption \ref{Assumption 5}), we get
		\begin{equation}
			\label{(61)}
			\frac{\lambda}{L}||\tilde{\boldsymbol{Z}}_{l(E^{c})}||_{\infty}\leq (2-\alpha)(||\boldsymbol{W}_{l}||_{\infty}+||\boldsymbol{R}_{l}||_{\infty})+(1-\alpha)\bigg(\frac{\lambda}{L}+\mu ||\tilde{\boldsymbol{M}}_{l(E)}||_{\infty}\bigg)
		\end{equation}
		If $||\tilde{\boldsymbol{M}}_{l(E)}||_{\infty}\leq 1$, for sufficiently large $n$, we have 
		\begin{equation}\label{(62)}
			\frac{\lambda}{L}||\tilde{\boldsymbol{Z}}_{l(E^{c})}||_{\infty}\leq (2-\alpha)(||\boldsymbol{W}_{l}||_{\infty}+||\boldsymbol{R}_{l}||_{\infty})+(1-\alpha)\bigg(\frac{\lambda}{L}+\mu \bigg)
		\end{equation}
		Thus, if $\mu<\frac{\alpha\lambda}{(2-\alpha)L}$ and $max\{||\boldsymbol{W}_{l}||_{\infty},||\boldsymbol{R}_{l}||_{\infty}\}\leq \frac{\alpha}{8}\bigg(\frac{\lambda}{L}+\mu \bigg)$, we have $||\tilde{\boldsymbol{Z}}_{l(E^c)}||_{\infty}<1,l=1,2,\ldots,L$ and hence the strict dual feasibility conditions are satisfied.
		Thus for strict dual feasibility conditions to be satisfied, we need to have:  
		\begin{enumerate}
			\item
			\begin{equation}
				||\tilde{\boldsymbol{M}}_{l(E)}||_{\infty}\leq 1, \text{ for sufficiently large } n
				\label{(63)}
			\end{equation}
			\item
			\begin{equation}
				||\boldsymbol{W}_{l}||_{\infty}\leq \frac{\alpha}{8}\bigg(\frac{\lambda}{L}+\mu \bigg)
				\label{(64)}
			\end{equation}
			\item 
			\begin{equation}
				||\boldsymbol{R}_{l}||_{\infty}\leq \frac{\alpha}{8}\bigg(\frac{\lambda}{L}+\mu \bigg)
				\label{(65)}
			\end{equation}
			\item 
			\begin{equation}
				\mu<\frac{\alpha\lambda}{(2-\alpha)L}
				\label{(66)}
			\end{equation}
		\end{enumerate}
		\begin{lemma}\label{Lemma 0.2}
			(Controlling $\boldsymbol{W}_{l}$) Suppose $\boldsymbol{W}_{l}=\boldsymbol{S}_{l}-\boldsymbol{\Sigma}_{l}^{0}$ be the difference between the sample covariance matrix and the true covariance matrix. Suppose the sample size $n_l$ be such that $\delta_{f}(n_l,p^{\gamma})\leq \underset{l=1,2,\ldots,L}{min}\frac{1}{v_{l0}}$, for all $l=1,2,\ldots,L$, where $\delta_{f}(n_l,p^{\gamma})$ is the inverse function from (\ref{(19)}). Then, for any $\gamma>2$, we have
			\begin{equation}
				\mathbb{P}\bigg(||W_{l}||_{\infty}\geq \delta_{f}(n_l,p^{\gamma})\bigg)\leq \frac{1}{p^{\gamma-2}}\rightarrow 0,
				\label{(67)}
			\end{equation}
			for all $l=1,2,\ldots,L$.
		\end{lemma}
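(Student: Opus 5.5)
This is a union bound over the $p^2$ coordinate pairs, applied to the elementwise tail condition $\mathcal{T}(f,v_{l0})$ in (\ref{(17)}), followed by the inversion property that defines $\delta_f$ in (\ref{(19)}). Fix a population $l$ and write $\delta_l=\delta_f(n_l,p^{\gamma})$.

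\textbf{Step 1: admissibility.} By hypothesis $\delta_l\le \min_{l}1/v_{l0}$. This is exactly the range of deviations for which (\ref{(17)}) is valid. So for every pair $(i,j)\in V\times V$ we may write
\[
\mathbb{P}(|W_{l,ij}|>\delta_l)\le \frac{1}{f(n_l,\delta_l)}.
\]

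\textbf{Step 2: inversion.} Next we show $f(n_l,\delta_l)\ge p^{\gamma}$, using the definition of $\delta_f(n,r)$ as the extremal $\delta$ with $f(n,\delta)\le r$ and the monotonicity of $f$ in $\delta$. For the sub-Gaussian case this is an explicit computation: plugging (\ref{(22)}) into (\ref{(21)}) gives exactly $4\exp(-\log(4p^{\gamma}))=p^{-\gamma}$.

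\textbf{Step 3: union bound.} Summing over the at most $p^2$ entries gives
\[
\mathbb{P}(\|\boldsymbol{W}_l\|_\infty>\delta_l)\le p^2\cdot p^{-\gamma}=p^{-(\gamma-2)}.
\]
This tends to $0$ because $\gamma>2$ and $p\to\infty$. Since $l$ was arbitrary, the bound holds for every $l=1,\dots,L$.

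\textbf{Main obstacle: strict versus weak inequality.} The statement uses the event $\{\|\boldsymbol{W}_l\|_\infty\ge\delta_l\}$, whereas (\ref{(17)}) controls the strict event $\{>\delta\}$. At the boundary value $\delta_l$ itself, the definition of $\delta_f$ only gives $f\le p^\gamma$.

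To fix this, take $\delta'\uparrow\delta_l$. Each such $\delta'$ is admissible, since it is smaller than $\delta_l$. Then
\[
\{\|\boldsymbol{W}_l\|_\infty\ge\delta_l\}\subseteq\{\|\boldsymbol{W}_l\|_\infty>\delta'\}.
\]
Apply Steps 2 and 3 at $\delta'$ and pass to the limit. This requires left-continuity of $f(n,\cdot)$, which holds trivially in the sub-Gaussian case where $f$ is continuous. Alternatively, read $\delta_f$ with the convention used in \cite{10.1214/11-EJS631}, under which $f(n,\delta_f(n,r))=r$.

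Either route yields the bound (\ref{(67)}). It is uniform in $l$ because $\gamma$ is common to all populations.
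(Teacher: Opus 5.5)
Your proof is the same union-bound argument the paper relies on: the paper gives no proof of its own, and the reasoning behind (\ref{(24)}) and the corresponding lemma of \cite{10.1214/11-EJS631} simply applies the entrywise tail bound at $\delta=\delta_f(n_l,p^{\gamma})$ and sums over the $p^2$ entries, tacitly using $f(n,\delta_f(n,r))=r$; your argument is correct whenever $f(n,\cdot)$ is continuous, as in the sub-Gaussian case you check explicitly. One caveat is that Step 2 does not follow from the definition and monotonicity, since (\ref{(19)}) only gives $f(n_l,\delta_l)\le p^{\gamma}$ (and $f(n_l,\delta')\le p^{\gamma}$ for every $\delta'<\delta_l$), so your limit $\delta'\uparrow\delta_l$ fixes the $\ge$ versus $>$ issue but not the inversion---whereas for the strict event, which is all that $\mathcal{A}_l$ in the proof of Theorem \ref{Theorem 2} uses, taking the limit from above, $\mathbb{P}(|W_{l,ij}|>\delta_l)=\lim_{\delta\downarrow\delta_l}\mathbb{P}(|W_{l,ij}|>\delta)\le p^{-\gamma}$, needs only the maximality of $\delta_l$ (so $f(n_l,\delta)>p^{\gamma}$ for $\delta>\delta_l$) and no continuity of $f$, provided $\delta_l<\min_{l}1/v_{l0}$.
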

		If we take $\delta=\underset{l}{max}\ \delta_{f}(n_l,p^{\gamma})$ and $\frac{\alpha}{8}\bigg(\frac{\lambda}{L}+\mu \bigg)=\delta$, then from (\ref{(67)}), we have $\mathbb{P}\bigg(||W_{l}||_{\infty}\geq \delta\bigg)\leq \frac{1}{p^{\gamma-2}}\rightarrow 0$, for all $l=1,2,\ldots,L$. Thus, we have (\ref{(67)}) with high probability for each $l=1,2,\ldots,L$, if $\frac{\alpha}{8}\bigg(\frac{\lambda}{L}+\mu \bigg)=\delta$.
		\begin{lemma}\label{Lemma 0.3}
			(Controlling $\boldsymbol{R}_{l}$) Suppose
			\begin{equation}\label{(68)}
				||\boldsymbol{\Delta}_{l}||_{\infty}\leq \frac{1}{3d\kappa_{\boldsymbol{\Sigma}_l^{0}}}, l=1,2,\ldots,L.
			\end{equation}
			Then $\boldsymbol{J}_{l}=\sum_{i=0}^{\infty}(-1)^{i}((\boldsymbol{\Omega}_{l}^{0})^{-1}\boldsymbol{\Delta}_{l})^{i}$ satisfies
			\begin{equation}\label{(69)}
				|||(\boldsymbol{J}_l)^{T}|||_{\infty}\leq \frac{3}{2},
			\end{equation}
			and $\boldsymbol{R}_l=(\boldsymbol{\Omega}_{l}^{0})^{-1}\boldsymbol{\Delta}_{l}(\boldsymbol{\Omega}_{l}^{0})^{-1}\boldsymbol{\Delta}_{l}\boldsymbol{J}_{l}(\boldsymbol{\Omega}_{l}^{0})^{-1}$ satisfies
			\begin{equation}\label{(70)}
				||\boldsymbol{R}_l||_{\infty}\leq \frac{3}{2}d ||\boldsymbol{\Delta}_{l}||_{\infty}^{2}\kappa^{3}_{\boldsymbol{\Sigma}^{0}_{l}}.
			\end{equation}
		\end{lemma}
		\begin{lemma}\label{Lemma 0.4}
			(Controlling $\tilde{\boldsymbol{M}}_{l(E)}$ and $\boldsymbol{\Delta}_{l}$) Suppose
			\begin{equation}\label{(75)}
				r_l= 2\kappa_{\boldsymbol{\eta}_{l}^{0}}\bigg(||\boldsymbol{W}_{l}||_{\infty}+(\frac{\lambda}{L}+\mu)\bigg) \leq \frac{1}{d}\ \underset{l_1}{min}\left\{ min\left\{\frac{1}{3\kappa_{\boldsymbol{\Sigma}_{l_1}^{0}}},\frac{1}{3\kappa^{3}_{\boldsymbol{\Sigma}_{l_1}^{0}}\kappa_{\boldsymbol{\eta}_{l_1}^{0}}}\right\}\right\}=\frac{C_1}{d},
			\end{equation}
			for all $l=1,2,\ldots,L$. Define $m_{0}=\underset{(i,j)\in E}{min}\ |\overline{\Omega}_{ij}^{0}|$ and $m_1=\underset{l}{max}\ \underset{(i,j)\in E}{max}\frac{|\Omega_{l,ij}^{0}-\overline{\Omega}_{ij}^{0}|}{|\overline{\Omega}_{ij}^{0}|}$. Let $r=\underset{l}{max}\ r_l$. Also suppose
			\begin{enumerate}
				\item \label{Lemma 0.4.1}
				\begin{eqnarray}
					\label{(76)}
					m_{0} & > & r+r^{1/4},
				\end{eqnarray}
				\item \label{Lemma 0.4.2}
				\begin{eqnarray}
					\label{(77)}
					m_{1} & \leq & \frac{-1+\sqrt{1+2r^{1/4}}}{2},
				\end{eqnarray}
				\item \label{Lemma 0.4.3}
				\begin{eqnarray}
					\label{(78)}
					r & \rightarrow & 0\text{ for sufficiently large }n.
				\end{eqnarray} 
			\end{enumerate}
			Then, 
			\begin{equation}\label{(79)}
				||\boldsymbol{\Delta}_{l}||_{\infty}=||\Tilde{\boldsymbol{\Omega}}_{l}-\boldsymbol{\Omega}_{l}^{0}||_{\infty}\leq r_l,
			\end{equation}
			for all $l=1,2,\ldots,L$.
		\end{lemma}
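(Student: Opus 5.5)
We will prove Lemma \ref{Lemma 0.4} by a fixed-point argument in the style of Ravikumar et al.\ (the approach behind Theorem 1 of \cite{10.1214/11-EJS631}), adapted to the extra $\mu$-term. By (\ref{(57)}), the restricted primal solution $\tilde{\boldsymbol{\Omega}}_l$ of (\ref{(54)}) is characterized on $E$ by the zero-gradient equation
\[
(\boldsymbol{S}_l-\tilde{\boldsymbol{\Omega}}_l^{-1})_{E}+\frac{\lambda}{L}\tilde{\boldsymbol{Z}}_{l(E)}+\mu\tilde{\boldsymbol{M}}_{l(E)}=0,
\]
with $\boldsymbol{\Delta}_{l(E^c)}=0$. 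This is unique by Theorem \ref{Theorem 1}, since $r_l\le C_1/d$ and the $\mu$ bound in (\ref{(49)}) is the constant $K$ with $\epsilon=C_1/d$. We therefore define, on the vectorized $E$-block, the map
\[
F_l(\boldsymbol{\Delta}_{(E)})=-(\boldsymbol{\eta}^{0}_{lEE})^{-1}\Big(G_l(\boldsymbol{\Omega}^0_{l}+\boldsymbol{\Delta})_{(E)}\Big)+\boldsymbol{\Delta}_{(E)},
\]
where $G_l$ is the left side above. Fixed points of $F_l$ are exactly zeros of $G_l$. Expanding with the remainder $\boldsymbol{R}_l$ gives
\[
F_l(\boldsymbol{\Delta}_{(E)})=(\boldsymbol{\eta}^{0}_{lEE})^{-1}\big(\boldsymbol{R}_{l(E)}(\boldsymbol{\Delta})-\boldsymbol{W}_{l(E)}-\tfrac{\lambda}{L}\tilde{\boldsymbol{Z}}_{l(E)}-\mu\tilde{\boldsymbol{M}}_{l(E)}(\boldsymbol{\Delta})\big).
\]
We will show that $F_l$ maps the ball $\mathbb{B}(r_l)=\{\|\boldsymbol{\Delta}_{(E)}\|_\infty\le r_l\}$ into itself. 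Brouwer's theorem then yields a zero in the ball, and uniqueness identifies it with $\tilde{\boldsymbol{\Omega}}_l$, which gives (\ref{(79)}).

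The self-map bound splits into three pieces via $\|F_l\|_\infty\le\kappa_{\boldsymbol{\eta}_l^0}\big(\|\boldsymbol{R}_l\|_\infty+\|\boldsymbol{W}_l\|_\infty+\frac{\lambda}{L}+\mu\|\tilde{\boldsymbol{M}}_{l(E)}\|_\infty\big)$, using $|\tilde Z|\le 1$.

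\textbf{Remainder term.} For $\boldsymbol{\Delta}\in\mathbb{B}(r_l)$, (\ref{(75)}) gives $\|\boldsymbol{\Delta}\|_\infty\le 1/(3d\kappa_{\boldsymbol{\Sigma}_l^0})$, so Lemma \ref{Lemma 0.3} applies. It yields
\[
\kappa_{\boldsymbol{\eta}_l^0}\|\boldsymbol{R}_l\|_\infty\le\tfrac32 d\,r_l^2\kappa^3_{\boldsymbol{\Sigma}_l^0}\kappa_{\boldsymbol{\eta}_l^0}\le \tfrac{r_l}{2},
\]
where the last step uses the second entry of the minimum defining $C_1$ ($r_l\le 1/(3d\kappa^3\kappa_\eta)$).

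\textbf{Penalty term.} The remaining contribution is $\kappa_{\boldsymbol{\eta}_l^0}(\|\boldsymbol{W}_l\|_\infty+\frac{\lambda}{L}+\mu)=r_l/2$, provided $\|\tilde{\boldsymbol{M}}_{l(E)}\|_\infty\le1$. Hence $\|F_l\|_\infty\le r_l$, and the ball is mapped into itself.

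The main obstacle is establishing $\|\tilde{\boldsymbol{M}}_{l(E)}\|_\infty\le 1$ uniformly over $\mathbb{B}(r)$ (this is also condition (\ref{(63)})), and this is where hypotheses 1--3 enter. From (\ref{(53)}), for $(i,j)\in E$ we have
\[
|M_{l,ij}|\le \frac{2|\Omega_{l,ij}-\overline\Omega_{ij}|}{\overline\Omega_{ij}^2}+\frac{2}{L}\sum_k\frac{(\Omega_{k,ij}-\overline\Omega_{ij})^2}{|\overline\Omega_{ij}|^3}.
\]
In the ball, $|\Omega_{l,ij}-\overline\Omega_{ij}|\le m_1|\overline\Omega^0_{ij}|+2r$ and $|\overline\Omega_{ij}|\ge|\overline\Omega^0_{ij}|-r\ge m_0-r>r^{1/4}$ by (\ref{(76)}). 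Writing $x=(m_1|\overline\Omega^0_{ij}|+2r)/|\overline\Omega_{ij}|$, the bound becomes $|M_{l,ij}|\le (2x+2x^2)/|\overline\Omega_{ij}|$. The plan is to show that $x$ is at most about $m_1(1+o(1))+2r^{3/4}$. Then (\ref{(77)}), which is equivalent to $2m_1+2m_1^2\le r^{1/4}$ (solve $2y^2+2y-r^{1/4}=0$), together with $|\overline\Omega_{ij}|>r^{1/4}$ from (\ref{(76)}), gives $(2x+2x^2)/|\overline\Omega_{ij}|\le 1+o(1)$. Hypothesis (\ref{(78)}) absorbs the $r^{3/4}$ and $m_1 r/|\overline\Omega^0|$ corrections for sufficiently large $n$.

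If the constants do not close exactly at $1$, we will first try slightly tightening the use of $|\overline\Omega_{ij}|\ge m_0-r$ rather than $r^{1/4}$. Failing that, we will accept $\|\tilde{\boldsymbol M}\|_\infty\le1$ only asymptotically, which is all that (\ref{(63)}) requires. Finally, the Brouwer step needs continuity of $F_l$ on $\mathbb{B}(r_l)$. This holds because $\boldsymbol{\Omega}^0_l+\boldsymbol{\Delta}\succ0$ there (by Gershgorin, since $dr_l\le C_1<L_1$ after adjusting constants) and because $\overline\Omega_{ij}$ is bounded away from $0$ on $E$.
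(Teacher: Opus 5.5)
Your route is the paper's. You use the same map $F$ and the same split: a remainder part bounded by $r_l/2$ via Lemma \ref{Lemma 0.3} and the second entry of $C_1$, and a $(\boldsymbol{W},\tilde{\boldsymbol{Z}},\tilde{\boldsymbol{M}})$ part bounded by $r_l/2$ provided $\|\tilde{\boldsymbol{M}}_{l(E)}\|_\infty\le 1$. Brouwer and uniqueness then finish. The gap is exactly the step you flag, and your fallback does not close it.

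\textbf{The $\tilde{\boldsymbol{M}}$ bound.} Brouwer needs $\|F_l\|_\infty\le r_l$ on the ball. With only $\|\tilde{\boldsymbol{M}}_{l(E)}\|_\infty\le 1+\eta_n$, $\eta_n\to 0$, you get
\[
\|F_l\|_\infty\le \tfrac{3}{2}\,d\,r_l^{2}\,\kappa^{3}_{\boldsymbol{\Sigma}_l^{0}}\kappa_{\boldsymbol{\eta}_l^{0}}+\tfrac{r_l}{2}+\kappa_{\boldsymbol{\eta}_l^{0}}\,\mu\,\eta_n .
\]
Under (\ref{(75)}) the first bound can be as large as $r_l/2$, which leaves no room for $\kappa_{\boldsymbol{\eta}_l^{0}}\mu\eta_n>0$. ``Asymptotically $\le 1$'' is neither what (\ref{(63)}) states ($\le 1$ exactly, for large $n$) nor what the self-map needs.

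A sharper computation cannot rescue this. Under (\ref{(76)})--(\ref{(77)}) as stated, the uniform bound $\sup\|\tilde{\boldsymbol{M}}_{l(E)}\|_\infty\le 1$ over the ball is false. Take $L=2$ and an edge with $\overline{\Omega}^{0}_{ij}=a$, $\Omega^{0}_{1,ij}=a(1+m_1)$, $\Omega^{0}_{2,ij}=a(1-m_1)$, where $2m_1(1+m_1)=r^{1/4}$ and $a$ is slightly above $r+r^{1/4}$. Take the ball point $\Delta_{1,ij}=r$, $\Delta_{2,ij}=-r$. Then $\overline{\Omega}_{ij}=a$, and with $z=m_1+r/a$, (\ref{(53)}) gives $|\tilde{M}_{2,ij}|=2z(1+z)/a>(r^{1/4}+2r/a)/a>1$.

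So your estimate $1+o(1)$ (in fact $1+O(r^{1/2})$) is the true order, and a margin has to be built into a hypothesis. Two options work:
\begin{itemize}
\item Require $2m_1(1+m_1)\le\rho\, r^{1/4}$ for a fixed $\rho<1$. In Theorem \ref{Theorem 2} this is a smallness condition on the constant $c$ of Assumption \ref{Assumption 6}.\ref{Assumption 6.2}, since $r\ge 16\kappa_{\boldsymbol{\eta}_l^{0}}\delta/\alpha$. It yields $\|\tilde{\boldsymbol{M}}_{l(E)}\|_\infty\le\rho+O(r^{1/2})\le 1$ once (\ref{(78)}) makes $r$ small.
\item Replace $C_1/d$ by $C_1/(2d)$ in (\ref{(75)}). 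Then the remainder is at most $r_l/4$ and absorbs the overflow $\kappa_{\boldsymbol{\eta}_l^{0}}\mu\eta_n\le r_l\eta_n/2$.
\end{itemize}
The paper's proof does not supply this step either. With $\theta_{ij}=\max_l|\Omega^{0}_{l,ij}-\overline{\Omega}^{0}_{ij}|/|\overline{\Omega}^{0}_{ij}|$, it reaches $|\tilde{M}_{l,ij}|\le 1$ through $\frac{1}{1-r/|\overline{\Omega}^{0}_{ij}|}\le 1$ and $\frac{\max_l|\Omega^{0}_{l,ij}-\overline{\Omega}^{0}_{ij}|+2r}{|\overline{\Omega}^{0}_{ij}|-r}\le\theta_{ij}$. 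Both inequalities actually hold in the reverse direction.

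\textbf{The joint fixed point.} A second point, easily repaired and also present in the paper: $\tilde{\boldsymbol{M}}_{l(E)}$ depends on all of $\boldsymbol{\Delta}_1,\dots,\boldsymbol{\Delta}_L$ through $\overline{\Omega}_{ij}$ and $\sum_k(\Omega_{k,ij}-\overline{\Omega}_{ij})^2$. So $F_l$ is not a self-map of $\mathbb{B}(r_l)$ by itself. Apply Brouwer to $(F_1,\dots,F_L)$ on $\prod_l\mathbb{B}(r_l)$; your $\tilde{\boldsymbol{M}}$ estimate already lives on this joint ball. Then use that the uniqueness from Theorem \ref{Theorem 1} is joint in $\{\boldsymbol{\Omega}_l\}$.

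Finally, positive definiteness needs no adjustment of constants. The spectral norm of $\boldsymbol{\Delta}_l$ is at most $|||\boldsymbol{\Delta}_l|||_\infty\le d\,r_l\le C_1\le\Lambda_{\min}(\boldsymbol{\Omega}^{0}_l)/3$, because $\kappa_{\boldsymbol{\Sigma}^{0}_l}\ge\Lambda_{\max}(\boldsymbol{\Sigma}^{0}_l)$.
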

		Define $\mathcal{A}_{l}=\{||\boldsymbol{W}_{l}||_{\infty}\leq \delta_{f}(n_{l},p^{\gamma})\},l=1,2,\ldots,L$ and some $\gamma>2$. If $\delta_{f}(n_{l},p^{\gamma})\leq \underset{l}{min}\ \frac{1}{v_{l0}},\forall l=1,2,\ldots,L$, we have $\mathbb{P}(\mathcal{A}_{l})\geq 1-\frac{1}{p^{\gamma-2}}\rightarrow 1$, and hence $\mathbb{P}(\mathcal{A}_{l},l=1,2,\ldots,L)\geq 1-\frac{L}{p^{\gamma-2}}$. Also, $\delta_{f}(n_{l},p^{\gamma})\leq \underset{l}{min}\ \frac{1}{v_{l0}},\forall l=1,2,\ldots,L$ implies $n_{l}\geq n_{f}(\underset{l}{min}\ \frac{1}{v_{l0}},p^{\gamma}),\forall l=1,2,\ldots,L$. Let $\mu+(\lambda/L)=(8/\alpha)\delta$, where $\delta=\underset{l}{max}\ \delta_{f}(n_{l},p^{\gamma})$. Under $\mathcal{A}_{l}$, we have $||\boldsymbol{W}_{l}||_{\infty}\leq \delta=\frac{\alpha}{8}\bigg(\frac{\lambda}{L}+\mu\bigg)$, for all $l=1,2,\ldots,L$. From (\ref{(31)}) and the interpretation of the tail conditions in (\ref{(20)}), we have
		\begin{equation}
			\label{(82)}
			\delta<\frac{1}{6d(1+(8/\alpha))\kappa_{\boldsymbol{\eta}_{l}^{0}}}\underset{l_1}{min}\bigg\{min\bigg\{\frac{1}{\kappa_{\boldsymbol{\Sigma}_{l_1}^{0}}},\frac{1}{\kappa_{\boldsymbol{\Sigma}_{l_1}^{0}}^{3}\kappa_{\boldsymbol{\eta}_{l_1}^{0}}}\bigg\}\bigg\},
		\end{equation}
		for all $l=1,2,\ldots,L$. Thus,
		$$
		2\kappa_{\boldsymbol{\eta}_{l}^{0}}\bigg(||\boldsymbol{W}_{l}||_{\infty}+\frac{\lambda}{L}+\mu\bigg)\leq 2\kappa_{\boldsymbol{\eta}_{l}^{0}}\bigg(1+\frac{8}{\alpha}\bigg)\delta \leq \frac{1}{d} \underset{l_1}{min}\bigg\{min\bigg\{\frac{1}{3\kappa_{\boldsymbol{\Sigma}_{l_1}^{0}}},\frac{1}{3\kappa_{\boldsymbol{\Sigma}_{l_1}^{0}}^{3}\kappa_{\boldsymbol{\eta}_{l_1}^{0}}}\bigg\}\bigg\}=\frac{C_1}{d}.
		$$
		From Lemma \ref{Lemma 0.4}, we have $||\boldsymbol{\Delta}_{l}||_{\infty}\leq r_l$, for all $l=1,2,\ldots,L$. Thus from Lemma \ref{Lemma 0.3}, we have $||\boldsymbol{R}_l||_{\infty}\leq \frac{3}{2}d ||\boldsymbol{\Delta}_{l}||_{\infty}^{2}\kappa^{3}_{\boldsymbol{\Sigma}^{0}_{l}}$. Substituting
		$$
		||\boldsymbol{\Delta}_{l}||_{\infty}\leq r_l \leq \frac{1}{d} \underset{l_1}{min}\bigg\{ \frac{1}{3\kappa_{\boldsymbol{\Sigma}_{l_1}^{0}}^{3}\kappa_{\boldsymbol{\eta}_{l_1}^{0}}}\bigg\},
		$$
		which implies $||\boldsymbol{\Delta}_{l}||_{\infty}\leq 1/(3d\kappa_{\boldsymbol{\Sigma}_{l}^{0}}^{3}\kappa_{\boldsymbol{\eta}_{l}^{0}})$, for all $l=1,2,\ldots,L$, we have 
		$$
		||\boldsymbol{R}_{l}||_{\infty}\leq \frac{\alpha}{8}\bigg(\frac{\lambda}{L}+\mu\bigg).
		$$
		Hence, $max\{||\boldsymbol{W}_{l}||_{\infty},||\boldsymbol{R}_{l}||_{\infty}\}\leq \frac{\alpha}{8}\bigg(\frac{\lambda}{L}+\mu\bigg)$, and thus strict dual feasibility conditions are satisfied under $\{\mathcal{A}_{l},l=1,2,\ldots,L\}$. Since conditioned on the event that the edge set has been identified, the primal solution is the unique solution of the optimization problem in (\ref{(27)}), we have $\tilde{\boldsymbol{\Omega}}_{l}=\hat{\boldsymbol{\Omega}}_{l},l=1,2,\ldots,L$. Hence, in $\mathcal{A}_{l}$, we have $\hat{\boldsymbol{\Omega}}_{l}$ satisfying
		\begin{equation}
			\label{(83)}
			||\hat{\boldsymbol{\Omega}}_{l}-\boldsymbol{\Omega}_{l}^{0}||_{\infty}\leq 2\kappa_{\boldsymbol{\eta}_{l}^{0}}\bigg(1+\frac{8}{\alpha}\bigg)\delta,
		\end{equation}
		for all $l=1,2,\ldots,L$. Thus the above statements hold for all $l=1,2,\ldots,L$ in $\mathcal{B}_{\infty}(C_1/d)$ with probability at least $1-\frac{L}{p^{\gamma-2}}\rightarrow 1$, for finite $L$ under the assumptions
		\begin{description}
			\item[(A1)]\label{ass:A1} $|\overline{\Omega}_{ij}^{0}|
			\ge r+r^{1/4},
			\qquad (i,j)\in E.$
			
			\item[(A2)]\label{ass:A2} $\underset{l}{max}\ \underset{(i,j)\in E}{max}\frac{|\Omega_{l,ij}^{0}-\overline{\Omega}_{ij}^{0}|}{|\overline{\Omega}_{ij}^{0}|}\leq  \frac{-1+\sqrt{1+2r^{1/4}}}{2}$
			
			\item[(A3)]\label{ass:A3} $r\rightarrow 0$ as $n\rightarrow \infty$.
		\end{description}
		However, since $r$ is a random quantity, we can use Lemma \ref{Lemma 0.2} and Lemma \ref{Lemma 0.4} to provide non-random bounds on assumptions (A1), (A2), and (A3) with high probability.
		\begin{enumerate}
			\item From Lemma \ref{Lemma 0.4}, since $r_l\leq C_1/d$,  where $$
			C_1= \underset{l_1}{min}\left\{ min\left\{\frac{1}{3\kappa_{\boldsymbol{\Sigma}_{l_1}^{0}}},\frac{1}{3\kappa^{3}_{\boldsymbol{\Sigma}_{l_1}^{0}}\kappa_{\boldsymbol{\eta}_{l_1}^{0}}}\right\}\right\},
			$$
			we have $$|\overline{\Omega}_{ij}^{0}|>\frac{C_1}{d}+\bigg(\frac{C_1}{d}\bigg)^{1/4}\implies |\overline{\Omega}_{ij}^{0}|\geq r+r^{1/4}.$$
			\item Note that $r\geq 2\underset{l}{max}\ \kappa_{\boldsymbol{\eta}_{l}^{0}}(8/\alpha)\delta$ with high probability, which implies $r=O_{p}(\delta)$. Thus, with any bounded constant $c$, with $\delta=o(1)$,
			$$
			\underset{l}{max}\ \underset{(i,j)\in E}{max}\frac{|\Omega_{l,ij}^{0}-\overline{\Omega}_{ij}^{0}|}{|\overline{\Omega}_{ij}^{0}|}\leq \frac{-1+\sqrt{1+2c\delta^{1/4}}}{2}
			$$
			serves a bound for $\underset{l}{max}\ \underset{(i,j)\in E}{max}\frac{|\Omega_{l,ij}^{0}-\overline{\Omega}_{ij}^{0}|}{|\overline{\Omega}_{ij}^{0}|}\leq  \frac{-1+\sqrt{1+2r^{1/4}}}{2}$ to hold.
		\end{enumerate}
		
	\end{proof}
	\subsubsection{Proof of relevant lemmas}
	\begin{proof}[Proof of Lemma \ref{Lemma 0.3}]
		\begin{eqnarray}\label{(71)}
			\notag
			\boldsymbol{R}_{l}=R(\boldsymbol{\Delta}_{l})&=& (\Tilde{\boldsymbol{\Omega}}_{l})^{-1}-(\boldsymbol{\Omega}_{l}^{0})^{-1}+(\boldsymbol{\Omega}_{l}^{0})^{-1}\ \boldsymbol{\Delta}_{l}\ (\boldsymbol{\Omega}_{l}^{0})^{-1}\\
			&=&     (\boldsymbol{\Omega}_{l}^{0}+\boldsymbol{\Delta}_{l})^{-1}-(\boldsymbol{\Omega}_{l}^{0})^{-1}+(\boldsymbol{\Omega}_{l}^{0})^{-1}\ \boldsymbol{\Delta}_{l}\ (\boldsymbol{\Omega}_{l}^{0})^{-1}
		\end{eqnarray}
		Now,
		\begin{eqnarray}\label{(72)}
			\notag
			(\boldsymbol{\Omega}_{l}^{0}+\boldsymbol{\Delta}_{l})^{-1}&=& (\boldsymbol{I}+(\boldsymbol{\Omega}_{l}^{0})^{-1}\boldsymbol{\Delta}_{l})^{-1}(\boldsymbol{\Omega}_{l}^{0})^{-1}\\
			\notag
			&=&\sum_{i=0}^{\infty}(-1)^{i}((\boldsymbol{\Omega}_{l}^{0})^{-1}\boldsymbol{\Delta}_{l})^{i}(\boldsymbol{\Omega}_{l}^{0})^{-1}\\
			\notag
			&=& (\boldsymbol{\Omega}_{l}^{0})^{-1}-(\boldsymbol{\Omega}_{l}^{0})^{-1}\boldsymbol{\Delta}_{l}(\boldsymbol{\Omega}_{l}^{0})^{-1}+\sum_{i=2}^{\infty}(-1)^{i}((\boldsymbol{\Omega}_{l}^{0})^{-1}\boldsymbol{\Delta}_{l})^{i}(\boldsymbol{\Omega}_{l}^{0})^{-1}\\
			&=& (\boldsymbol{\Omega}_{l}^{0})^{-1}-(\boldsymbol{\Omega}_{l}^{0})^{-1}\boldsymbol{\Delta}_{l}(\boldsymbol{\Omega}_{l}^{0})^{-1}+(\boldsymbol{\Omega}_{l}^{0})^{-1}\boldsymbol{\Delta}_{l}(\boldsymbol{\Omega}_{l}^{0})^{-1}\boldsymbol{\Delta}_{l}\boldsymbol{J}_{l}(\boldsymbol{\Omega}_{l}^{0})^{-1}.
		\end{eqnarray}
		Hence, we have $\boldsymbol{R}_{l}=((\boldsymbol{\Omega}_{l}^{0})^{-1}\boldsymbol{\Delta}_{l})^{2}\boldsymbol{J}_{l}(\boldsymbol{\Omega}_{l}^{0})^{-1}$. Now,
		\begin{eqnarray}\label{(73)}
			\notag
			||\boldsymbol{R}_{l}||_{\infty}&=&\underset{i,j}{max}|\boldsymbol{e}^{T}_i (\boldsymbol{\Omega}_{l}^{0})^{-1}\boldsymbol{\Delta}_{l}(\boldsymbol{\Omega}_{l}^{0})^{-1}\boldsymbol{\Delta}_{l}\boldsymbol{J}_{l}(\boldsymbol{\Omega}_{l}^{0})^{-1}\boldsymbol{e}_{j}|\\
			\notag
			&\leq & \underset{i}{max}||\boldsymbol{e}^{T}_i (\boldsymbol{\Omega}_{l}^{0})^{-1}\boldsymbol{\Delta}_{l}||_{\infty} \ \underset{j}{max}||(\boldsymbol{\Omega}_{l}^{0})^{-1}\boldsymbol{\Delta}_{l}\boldsymbol{J}_{l}(\boldsymbol{\Omega}_{l}^{0})^{-1}\boldsymbol{e}_{j}||_{1}\\
			\notag
			&\leq & \underset{i}{max}||\boldsymbol{e}^{T}_i (\boldsymbol{\Omega}_{l}^{0})^{-1}||_{1}||\boldsymbol{\Delta}_{l}||_{\infty}\ \underset{j}{max}||(\boldsymbol{\Omega}_{l}^{0})^{-1}\boldsymbol{\Delta}_{l}\boldsymbol{J}_{l}(\boldsymbol{\Omega}_{l}^{0})^{-1}\boldsymbol{e}_{j}||_{1}\\
			\notag
			&\leq & |||(\boldsymbol{\Omega}_{l}^{0})^{-1}|||_{\infty}||\boldsymbol{\Delta}_{l}||_{\infty}|||(\boldsymbol{\Omega}_{l}^{0})^{-1}\boldsymbol{J}_{l}\boldsymbol{\Delta}_{l}(\boldsymbol{\Omega}_{l}^{0})^{-1}|||_{\infty}\\
			&\leq & d ||\boldsymbol{\Delta}_{l}||^{2}_{\infty} \kappa_{\boldsymbol{\Sigma}_{l}^{0}}^{3}|||\boldsymbol{J}_{l}^{T}|||_{\infty}
		\end{eqnarray}
		Note that $|||\boldsymbol{J}_{l}^{T}|||_{\infty}\leq \sum_{i=0}^{\infty}|||\boldsymbol{\Delta}_{l}(\boldsymbol{\Omega}_{l}^{0})^{-1}|||_{\infty}^{i}\leq \frac{1}{1-|||(\boldsymbol{\Omega}_{l}^{0})^{-1}|||_{\infty}|||\boldsymbol{\Delta}_{l}|||_{\infty}}\leq 3/2$. Hence, we have
		\begin{equation}\label{(74)}
			||\boldsymbol{R}_l||_{\infty}\leq \frac{3}{2}d ||\boldsymbol{\Delta}_{l}||_{\infty}^{2}\kappa^{3}_{\boldsymbol{\Sigma}^{0}_{l}}.
		\end{equation}
	\end{proof}
	\begin{proof}[Proof of Lemma \ref{Lemma 0.4}]
		Define $G(\boldsymbol{\Omega}_{l(E)})=-[(\boldsymbol{\Omega}_{l})^{-1}]_{(E)}+\boldsymbol{S}_{l(E)}+\frac{\lambda}{L} \tilde{\boldsymbol{Z}}_{l(E)}+\mu \tilde{\boldsymbol{M}}_{l(E)} $ as the gradient function of the optimization problem (\ref{(27)}) in the set $E$. Since $Q(\boldsymbol{\Omega})$ is strictly convex in $\mathcal{B}_{\infty}(C_1/d)$ if $\mu$ satisfies (\ref{(49)}), we have a unique minimum of $Q(\boldsymbol{\Omega})$, which is $\Tilde{\boldsymbol{\Omega}}=\{\Tilde{\boldsymbol{\Omega}}_{l(E)},l=1,2,\ldots,L\}$. Consequently this gradient function shall vanish at $\Tilde{\boldsymbol{\Omega}}_{l(E)}$ and is a unique solution of the equation $G(\boldsymbol{\Omega}_{l(E)})=0$. Define the supnorm ball as $\mathcal{B}(r_{l})=\{\boldsymbol{B}_{(E)}:||\boldsymbol{B}_{(E)}||_{\infty}\leq r_{l}\}$. Note that $\cup_{l}\mathcal{B}(r_{l})\subseteq \mathcal{B}_{\infty}(C_1/d)$. The idea is to define a continuous map $F$, such that any value from the convex set $\mathcal{B}(r_{l})$ shall be mapped onto $\mathcal{B}(r_{l})$ itself, and hence we have $F(\mathcal{B}(r_{l}))\subset\mathcal{B}(r_{l})$. Using Brouwer's fixed point theorem, we have some fixed point $\boldsymbol{\Delta}_{l(E)}$ in $\mathcal{B}(r_{l})$. From the uniqueness of $G(\boldsymbol{\Omega}_{l(E)})=0$ in $\mathcal{B}_{\infty}(C_1/d)$, we conclude that $||\boldsymbol{\Delta}_{l}||_{\infty}\leq r_{l}$, for all $l=1,2,\ldots,L$. Define the continuous $F$ as 
		\begin{equation}\label{(80)}
			F(vec(\boldsymbol{\Delta}_{l(E)}))=-(\boldsymbol{\eta}^{0}_{lEE})^{-1}\ vec(G(\boldsymbol{\Omega}_{l(E)}^{0}+\boldsymbol{\Delta}_{l(E)}))+vec(\boldsymbol{\Delta}_{l(E)}).
		\end{equation}
		Note that $F(vec(\boldsymbol{\Delta}_{l(E)}))=vec(\boldsymbol{\Delta}_{l(E)})$ if and only if $G(\boldsymbol{\Omega}_{l(E)}^{0}+\boldsymbol{\Delta}_{l(E)})=G(\Tilde{\boldsymbol{\Omega}}_{l})=0$. Let $\boldsymbol{\Delta}_{l}\in \mathbb{R}^{p\times p}$ such that $\boldsymbol{\Delta}_{l(E)}\in \mathcal{B}(r_l)$ and $\boldsymbol{\Delta}_{l(E^c)}=0$. Now,
		\begin{eqnarray*}
			G(\boldsymbol{\Omega}_{l(E)}^{0}+\boldsymbol{\Delta}_{l(E)})&=&-[(\boldsymbol{\Omega}_{l}^{0}+\boldsymbol{\Delta}_{l})^{-1}]_{(E)}+\boldsymbol{S}_{l(E)}+\frac{\lambda}{L} \tilde{\boldsymbol{Z}}_{l(E)}+\mu \tilde{\boldsymbol{M}}_{l(E)}\\
			&=& -[(\boldsymbol{\Omega}_{l}^{0}+\boldsymbol{\Delta}_{l})^{-1}]_{(E)}+ \boldsymbol{\Sigma}_{l(E)}^{0}+\boldsymbol{W}_{l(E)}+\frac{\lambda}{L}\tilde{\boldsymbol{Z}}_{l(E)}+\mu \tilde{\boldsymbol{M}}_{l(E)}.
		\end{eqnarray*}
		Thus,
		\begin{eqnarray*}
			F(vec(\boldsymbol{\Delta}_{l(E)})) &=& (\boldsymbol{\eta}^{0}_{lEE})^{-1}\ (vec(((\boldsymbol{\Omega}_{l}^{0}+\boldsymbol{\Delta}_{l})^{-1})_{(E)})-vec(\boldsymbol{\Sigma}^{0}_{l(E)})-vec(\boldsymbol{W}_{l(E)})\\
			& &-\frac{\lambda}{L}vec(\tilde{\boldsymbol{Z}}_{l(E)})-\mu vec(\tilde{\boldsymbol{M}}_{l(E)}))+vec(\boldsymbol{\Delta}_{l(E)})\\
			&=& (\boldsymbol{\eta}^{0}_{lEE})^{-1}\ (vec(((\boldsymbol{\Omega}_{l}^{0}+\boldsymbol{\Delta}_{l})^{-1})_{(E)})-vec(((\boldsymbol{\Omega}_{l}^{0})^{-1})_{(E)})-vec(\boldsymbol{W}_{l(E)})\\
			& &-\frac{\lambda}{L}vec(\tilde{\boldsymbol{Z}}_{l(E)})-\mu vec(\tilde{\boldsymbol{M}}_{l(E)}))+vec(\boldsymbol{\Delta}_{l(E)}).
		\end{eqnarray*}
		From (\ref{(72)}), we have
		$$
		vec(((\boldsymbol{\Omega}_{l}^{0}+\boldsymbol{\Delta}_{l})^{-1})_{(E)})-vec(((\boldsymbol{\Omega}_{l}^{0})^{-1})_{(E)})+\boldsymbol{\eta}_{lEE}^{0}vec(\boldsymbol{\Delta}_{l(E)})=vec((((\boldsymbol{\Omega}_{l}^{0})^{-1}\boldsymbol{\Delta}_{l})^{2}\boldsymbol{J}_{l}(\boldsymbol{\Omega}_{l}^{0})^{-1})_{(E)}).
		$$
		Hence, we have
		\begin{eqnarray}\label{(81)}
			\notag
			F(vec(\boldsymbol{\Delta}_{l(E)})) &=& (\boldsymbol{\eta}^{0}_{lEE})^{-1}\ vec((((\boldsymbol{\Omega}_{l}^{0})^{-1}\boldsymbol{\Delta}_{l})^{2}\boldsymbol{J}_{l}(\boldsymbol{\Omega}_{l}^{0})^{-1})_{(E)})-(\boldsymbol{\eta}^{0}_{lEE})^{-1}\bigg(vec(\boldsymbol{W}_{l(E)})\\
			\notag
			& &+\frac{\lambda}{L}vec(\tilde{\boldsymbol{Z}}_{l(E)})+\mu vec(\tilde{\boldsymbol{M}}_{l(E)})\bigg)\\
			&=& T_{1l}+T_{2l}
		\end{eqnarray}
		\begin{enumerate}
			\item Note that
			$$
			||T_{1l}||_{\infty}=||(\boldsymbol{\eta}^{0}_{lEE})^{-1}\ vec((((\boldsymbol{\Omega}_{l}^{0})^{-1}\boldsymbol{\Delta}_{l})^{2}\boldsymbol{J}_{l}(\boldsymbol{\Omega}_{l}^{0})^{-1})_{(E)})||_{\infty}\leq \kappa_{\boldsymbol{\eta}_{l}^{0}}||\boldsymbol{R}_{l}||_{\infty}
			$$
			Now, since from our assumptions, $||\boldsymbol{\Delta}_{l}||_{\infty}\leq r_l\leq \frac{1}{d}\underset{l_1}{min}\frac{1}{3\kappa_{\boldsymbol{\Sigma}_{l_1}^{0}}}\leq \frac{1}{3d\kappa_{\boldsymbol{\Sigma}_{l}^{0}}},l=1,2,\ldots,L$, from Lemma \ref{Lemma 0.3}, we have $||\boldsymbol{R}_l||_{\infty}\leq \frac{3}{2}d ||\boldsymbol{\Delta}_{l}||_{\infty}^{2}\kappa^{3}_{\boldsymbol{\Sigma}^{0}_{l}}$. Also, $||\boldsymbol{\Delta}_{l}||_{\infty}\leq r_l\leq \frac{1}{3d\kappa_{\boldsymbol{\Sigma}_{l}^{0}}^{3}\kappa_{\boldsymbol{\eta}_{l}^{0}}},l=1,2,\ldots,L$. Thus, $||\boldsymbol{R}_l||_{\infty}\leq \frac{r_l}{2\kappa_{\boldsymbol{\eta}_{l}^{0}}}$, which implies $||T_{1l}||_{\infty}\leq r_{l}/2$.
			\item Again, note that
			$$
			||T_{2l}||_{\infty}\leq \kappa_{\boldsymbol{\eta}_{l}^{0}}\bigg(||\boldsymbol{W}_{l}||_{\infty}+\frac{\lambda}{L}+\mu ||\tilde{\boldsymbol{M}}_{l}||_{\infty}\bigg).
			$$
			Since, $\tilde{\Omega}_{l,ij}=\Omega_{l,ij}^{0}+\Delta_{l,ij}$, we have
			\begin{eqnarray*}
				\tilde{M}_{l,ij}&=& \frac{2(\Omega_{l,ij}^{0}-\overline{\Omega}_{ij}^{0}+\Delta_{l,ij}-\overline{\Delta}_{ij})}{(\overline{\Omega}_{ij}^{0}+\overline{\Delta}_{ij})^2}-\frac{2}{L}\sum_{k=1}^{L} \frac{(\Omega_{k,ij}^{0}-\overline{\Omega}_{ij}^{0}+\Delta_{k,ij}-\overline{\Delta}_{ij})^{2}}{(\overline{\Omega}_{ij}^{0}+\overline{\Delta}_{ij})^3}\\
				\implies|\tilde{M}_{l,ij}|&\leq & \frac{2(|\Omega_{l,ij}^{0}-\overline{\Omega}_{ij}^{0}|+|\Delta_{l,ij}-\overline{\Delta}_{ij}|)}{(\overline{\Omega}_{ij}^{0}+\overline{\Delta}_{ij})^2}+\frac{2}{L}\sum_{k=1}^{L} \frac{(|\Omega_{k,ij}^{0}-\overline{\Omega}_{ij}^{0}|+|\Delta_{k,ij}-\overline{\Delta}_{ij}|)^{2}}{|\overline{\Omega}_{ij}^{0}+\overline{\Delta}_{ij}|^3}\\
				&=& \beta_{1}+\beta_{2}
			\end{eqnarray*}
			Since, $\boldsymbol{\Delta}_{l(E)}\in \mathcal{B}(r_l)$, we have $|\Delta_{l,ij}-\overline{\Delta}_{ij}|\leq |\Delta_{l,ij}|+|\overline{\Delta}_{ij}|\leq r_l+\overline{r}\leq 2r$. Hence, we have,
			\begin{equation*}
				|\tilde{M}_{l,ij}|\leq  \frac{2\  \underset{l}{max}(|\Omega_{l,ij}^{0}-\overline{\Omega}_{ij}^{0}|+2\ r)}{(\overline{\Omega}_{ij}^{0}+\overline{\Delta}_{ij})^2}+ \frac{2\ \underset{l}{max}\ (|\Omega_{l,ij}^{0}-\overline{\Omega}_{ij}^{0}|+2r)^{2}}{|\overline{\Omega}_{ij}^{0}+\overline{\Delta}_{ij}|^3}.
			\end{equation*}
			Since $-r_l\leq \Delta_{l,ij}\leq r_l,l=1,2,\ldots L$, we have, $|\overline{\Omega}_{ij}^{0}+\overline{\Delta}_{ij}|\geq max(0,|\overline{\Omega}_{ij}^{0}|-r)$. From (\ref{(76)}), we have, $|\overline{\Omega}_{ij}^{0}+\overline{\Delta}_{ij}|\geq |\overline{\Omega}_{ij}^{0}|-r$. Then, we have 
			\begin{eqnarray*}
				\beta_{1} &\leq & \frac{2\ (\underset{l}{max}|\Omega_{l,ij}^{0}-\overline{\Omega}_{ij}^{0}|+2\ r)}{(|\overline{\Omega}_{ij}^{0}|-r)^{2}}.
			\end{eqnarray*}
			Similarly,
			\begin{equation*}
				\beta_{2} \leq \frac{2\ (\underset{l}{max}|\Omega_{l,ij}^{0}-\overline{\Omega}_{ij}^{0}|+2\ r)^{2}}{(|\overline{\Omega}_{ij}^{0}|-r)^{3}}.
			\end{equation*}
			Then,
			\begin{equation*}
				|\tilde{M}_{l,ij}|\leq  \frac{2\ (\underset{l}{max}|\Omega_{l,ij}^{0}-\overline{\Omega}_{ij}^{0}|+2\ r)}{(|\overline{\Omega}_{ij}^{0}|-r)^{2}}\bigg(1+\frac{\underset{l}{max}|\Omega_{l,ij}^{0}-\overline{\Omega}_{ij}^{0}|+2\ r}{|\overline{\Omega}_{ij}^{0}|-r}\bigg).
			\end{equation*}
			Note that
			\begin{equation*}
				\frac{\underset{l}{max}|\Omega_{l,ij}^{0}-\overline{\Omega}_{ij}^{0}|+2\ r}{|\overline{\Omega}_{ij}^{0}|-r}=\frac{\underset{l}{max}|\Omega_{l,ij}^{0}-\overline{\Omega}_{ij}^{0}|+2\ r}{|\overline{\Omega}_{ij}^{0}|}\times \frac{1}{1-\frac{r}{|\overline{\Omega}_{ij}^{0}|}}.
			\end{equation*}
			From (\ref{(76)}) and (\ref{(78)}), we have $\frac{1}{1-\frac{r}{|\overline{\Omega}_{ij}^{0}|}}\leq 1$, for sufficiently large $n$. Again,
			\begin{eqnarray*}
				\frac{\underset{l}{max}|\Omega_{l,ij}^{0}-\overline{\Omega}_{ij}^{0}|+2\ r}{|\overline{\Omega}_{ij}^{0}|} & \leq & \frac{\underset{l}{max}|\Omega_{l,ij}^{0}-\overline{\Omega}_{ij}^{0}|}{|\overline{\Omega}_{ij}^{0}|}+ \frac{2r}{r+r^{1/4}}\\
				&\rightarrow &\frac{\underset{l}{max}|\Omega_{l,ij}^{0}-\overline{\Omega}_{ij}^{0}|}{|\overline{\Omega}_{ij}^{0}|}=\theta_{ij}
			\end{eqnarray*}
			Thus, for sufficiently large $n$, we have
			\begin{equation*}
				\frac{\underset{l}{max}|\Omega_{l,ij}^{0}-\overline{\Omega}_{ij}^{0}|+2\ r}{|\overline{\Omega}_{ij}^{0}|-r}\leq \theta_{ij}.
			\end{equation*}
			Hence,
			\begin{eqnarray*}
				|\tilde{M}_{l,ij}| & \leq & \frac{2\ (\underset{l}{max}|\Omega_{l,ij}^{0}-\overline{\Omega}_{ij}^{0}|+2\ r)}{(|\overline{\Omega}_{ij}^{0}|-r)^{2}}(1+\theta_{ij})\\
				& = & \frac{2\ (\underset{l}{max}|\Omega_{l,ij}^{0}-\overline{\Omega}_{ij}^{0}|+2\ r)}{(|\overline{\Omega}_{ij}^{0}|-r)}\times \frac{(1+\theta_{ij})}{(|\overline{\Omega}_{ij}^{0}|-r)}\\
				&\leq& 2\frac{\theta_{ij}(1+\theta_{ij})}{r^{1/4}}
			\end{eqnarray*}
			From (\ref{(77)}) we have $|\tilde{M}_{l,ij}|\leq 1$, and hence $||T_{2l}||_{\infty}\leq r_l/2$ for sufficiently large $n$.
		\end{enumerate}
		Thus for sufficiently large sample size $n_l$, we have $||F(vec(\boldsymbol{\Delta}_{l(E)}))||_{\infty}\leq r_l$. So $F(\mathcal{B}(r_{l}))\subset\mathcal{B}(r_{l})$.
	\end{proof}
	\subsection{Algorithmic convergence with increasing ADMM steps}
	This subsection empirically demonstrates the convergence behavior of the Mglasso algorithm by tracking the augmented Lagrangian objective value as a function of ADMM iterations for both chain and star graph structures at dimension $p = 25$. Results are presented in Figures \ref{Figure 15} and \ref{Figure 16} for two sample sizes, $n = 50$ and $n = 500$, illustrating that the augmented Lagrangian decreases monotonically across ADMM steps in all settings, reaching near-convergence within relatively few iterations. The convergence is notably faster and smoother at the larger sample size, consistent with the improved conditioning of the optimization problem when more data are available. Together, these plots provide empirical evidence that the alternating ADMM-with-gradient-descent procedure reliably minimizes the objective and is robust across varying sample regimes.
	\begin{figure}
		
		\centering{
			\subfloat[$n=50$]{
				
				\includegraphics[scale=0.3]{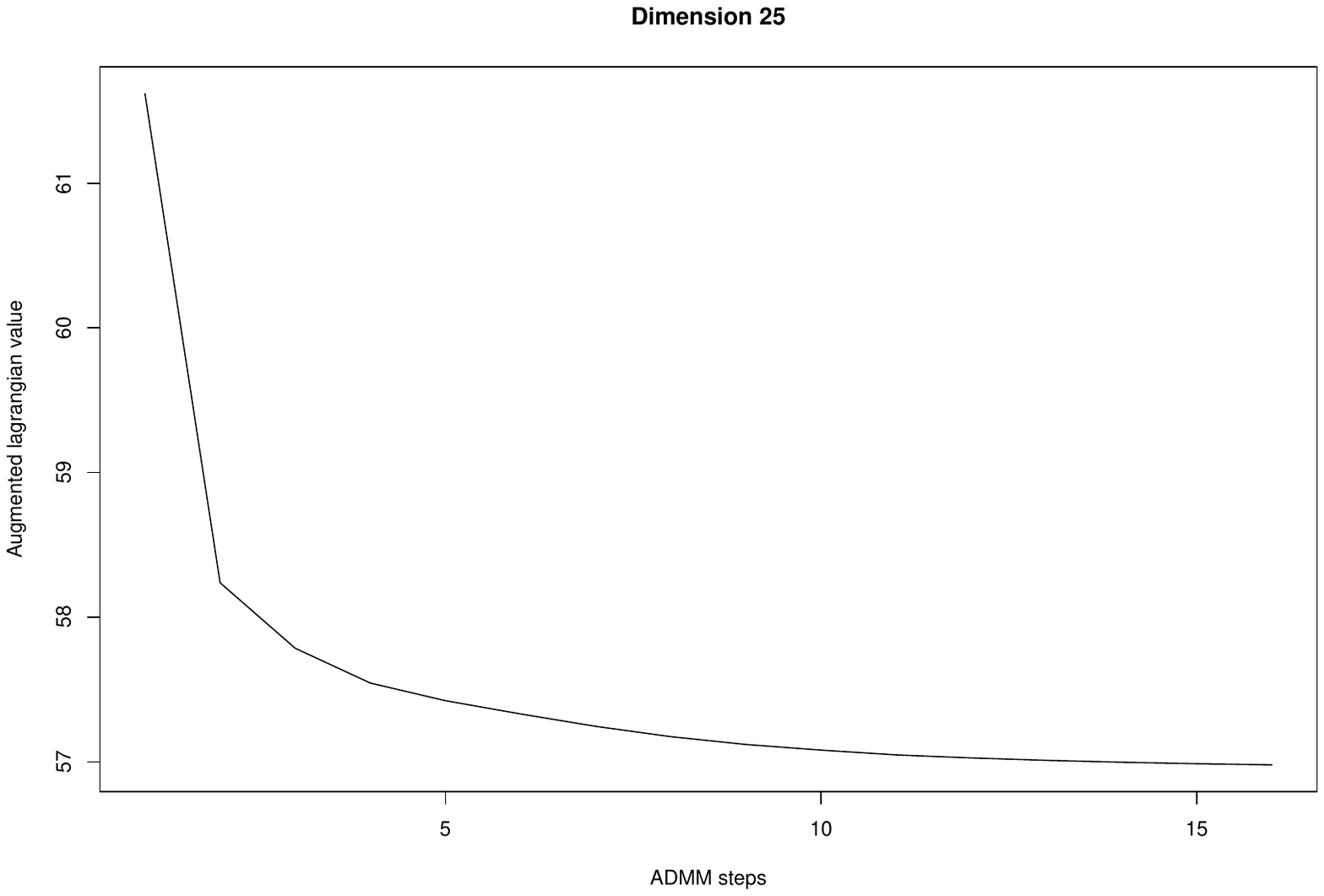}}
			\subfloat[$n=500$]{
				
				\includegraphics[scale=0.3]{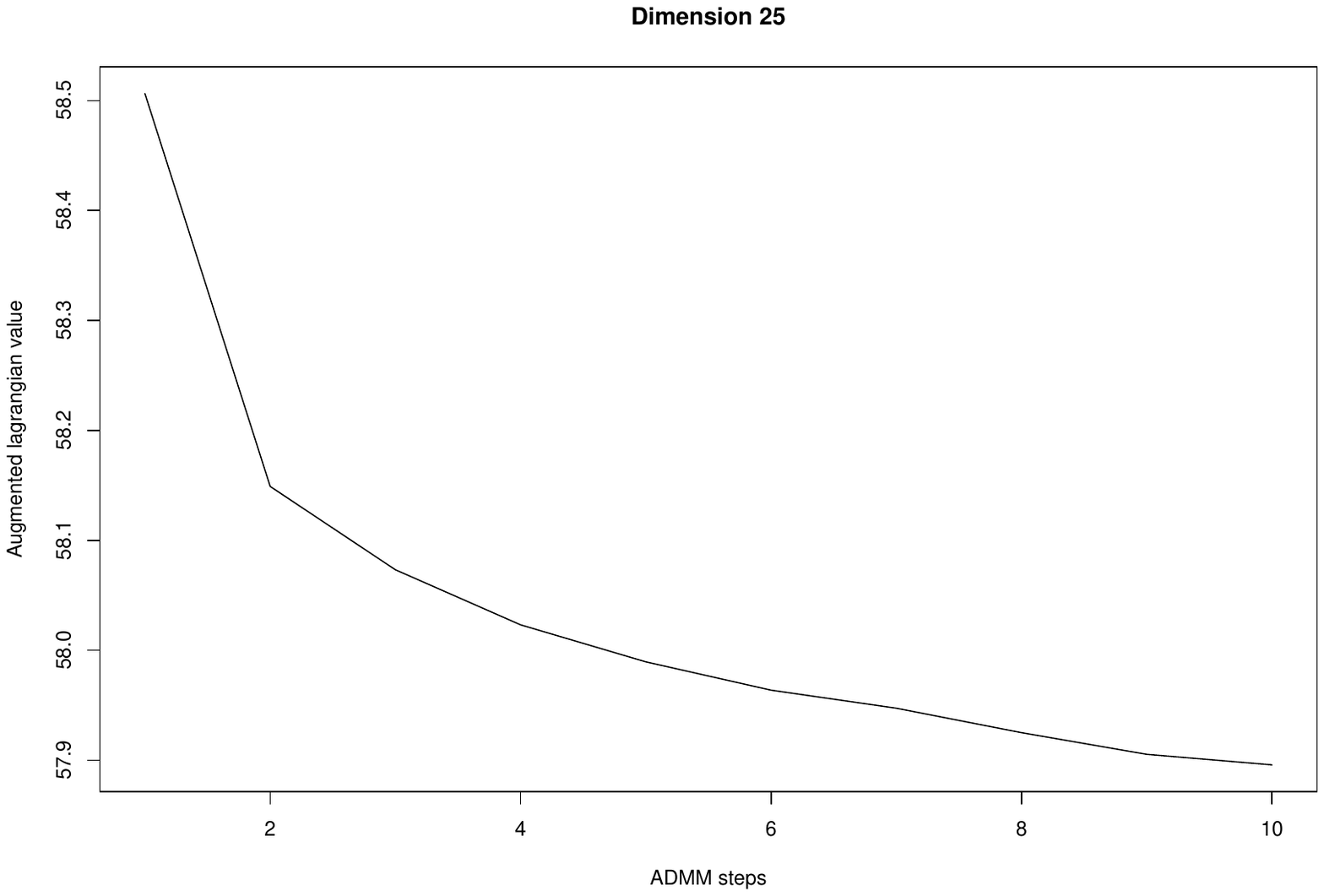}}				
		}
		
		\caption{\label{Figure 15}Comparison for Chain graphs (Augmented Lagrangian vs ADMM steps) for sample sizes $n=50$ (Panel a) and $n=500$ (Panel b)}
		
	\end{figure}
	\begin{figure}
		
		\centering{
			\subfloat[$n=50$]{
				
				\includegraphics[scale=0.3]{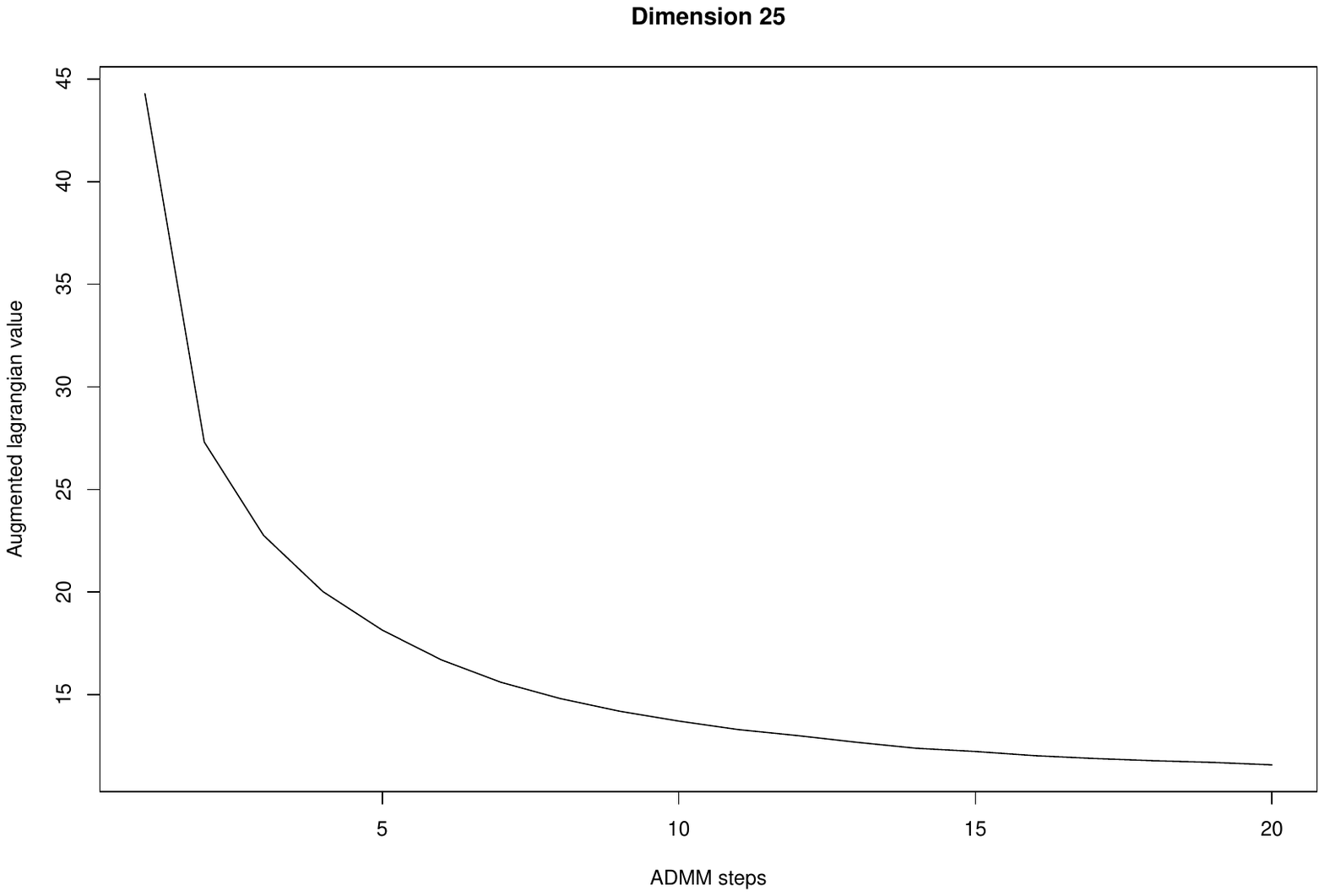}}
			\subfloat[$n=500$]{
				
				\includegraphics[scale=0.3]{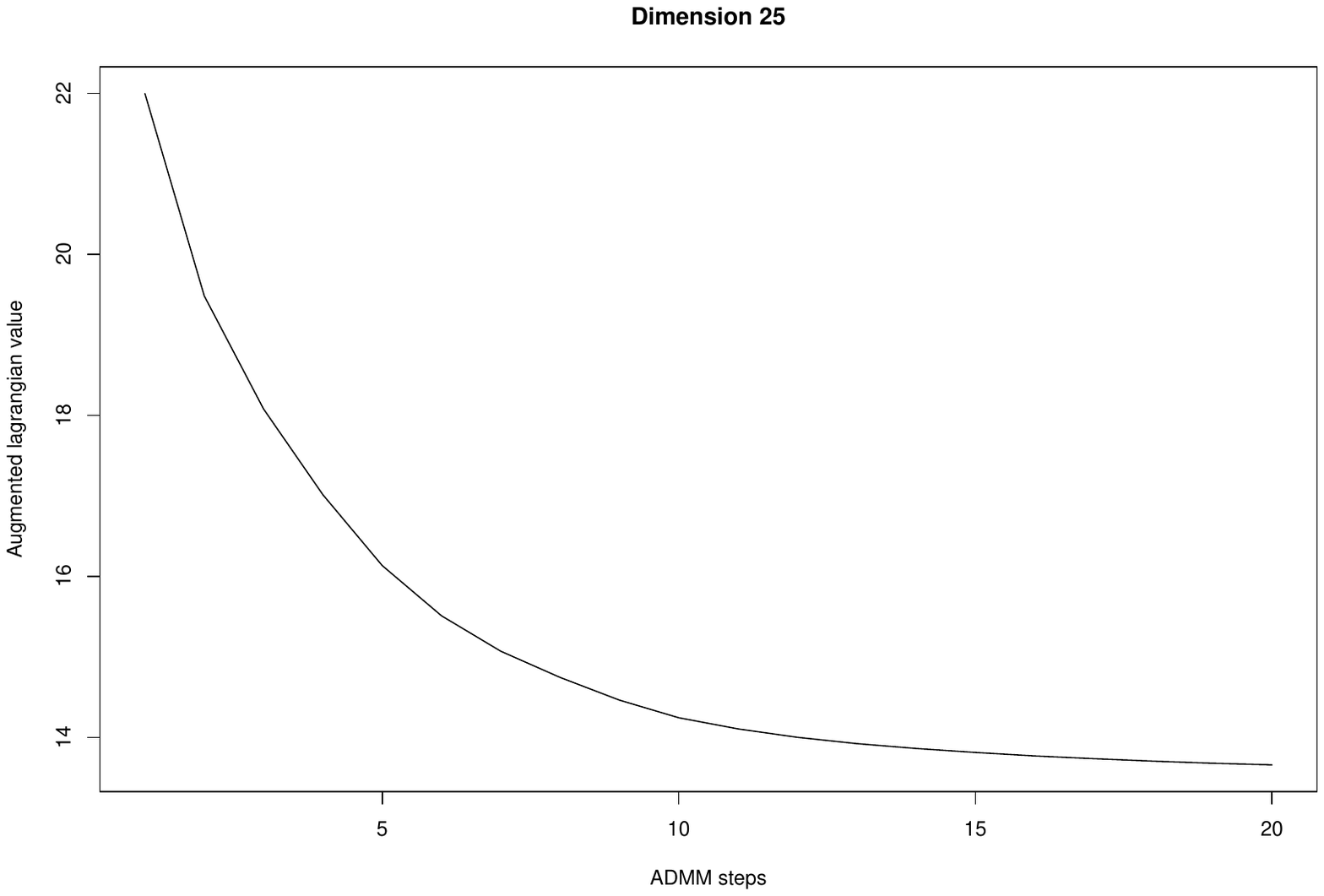}}				
		}
		
		\caption{\label{Figure 16}Comparison for Star graphs (Augmented Lagrangian vs ADMM steps) for sample sizes $n=50$ (Panel a) and $n=500$ (Panel b)}
		
	\end{figure}
	\subsection{Brief summary of recovered network and gene details for dataset GSE25066}
	The estimated graph exhibits a modular organisation consistent with
	the known functional architecture of the KEGG Breast Cancer signalling
	pathway, though the recovered connectivity reflects transcriptional
	co-regulatory relationships among genes rather than the biochemical
	interaction topology of the pathway. This distinction is important
	for interpreting both the recovered edges and the isolated or sparsely
	connected nodes.
	
	Within the cell cycle and DNA damage response module, the network
	recovers a coherent backbone comprising BRCA1, E2F1, CDK4, RB1, GADD45G,
	and peripheral nodes BAK1, DDB2, and E2F3. The CDK4--E2F1--RB1 axis
	reflects the canonical G1/S regulatory circuit in which CDK4-mediated
	phosphorylation of RB1 releases E2F transcription factors to drive
	S-phase entry (\cite{sherr1999cdk}). The BRCA1--E2F1 co-expression
	is consistent with the established regulation of BRCA1 transcription
	by RB:E2F repressor complexes across the cell cycle (\cite{bindra2006basal}; \cite{quaas2026brca1}; \cite{de2010transcriptional}). GADD45G connects via
	a cross-module edge to FZD7, consistent with the broader pattern of
	DNA damage response genes connecting to adjacent pathway modules through
	shared transcriptional regulatory elements rather than remaining confined
	within a single module. 
	
	Within the Notch signalling module, the network recovers a single
	internal edge between NOTCH3 and HES1, while NOTCH2 and NOTCH4 are
	disconnected from this pair and from each other internally, though
	both connect to the broader network via cross-pathway edges. The NOTCH3--HES1
	conditional dependence is the most biologically defensible recovery
	within this module: NOTCH3 is the predominantly expressed Notch receptor
	in luminal ER-positive breast cancer, directly associates with ER$\alpha$
	at the transcriptional level, and its expression is significantly
	correlated with relapse-free survival specifically in ER$\alpha$-positive
	patients (\cite{dou2017notch3}). HES1 is the primary canonical transcriptional
	target of Notch signalling and is highly expressed in the luminal
	breast cancer subtype (\cite{yousefi2022notch}), making the NOTCH3--HES1
	co-variation the most stable and consistent signal across the ER-stratified
	GSE25066 cohort. The internal disconnection of NOTCH2 reflects its
	well-documented context-dependent and partially divergent role in
	breast cancer: increased NOTCH2 expression has been associated with
	better survival in ER-positive luminal patients and with anti-tumorigenic
	activity in experimental models, making its transcriptional co-variation
	with the oncogenic NOTCH3--HES1 programme inconsistent across a heterogeneous
	patient population (\cite{nandi2020many}).
	NOTCH4, by contrast, is restricted to basal and myoepithelial compartments
	containing the mammary stem cell population, with NOTCH4 mRNA expressed
	at higher levels in basal cells than in luminal progenitors (\cite{harrison2010regulation}); in bulk microarray data from a predominantly luminal
	cohort such as GSE25066, its signal is diluted by the dominant luminal
	cell population, suppressing its conditional co-variation with NOTCH3
	and HES1 below the detection threshold of the graphical lasso. 
	
	By contrast, the PI3K/AKT/mTOR and RAS/MAPK modules display sparse
	internal connectivity, with several canonical components --- including
	PTEN, AKT2, KRAS, and MAP2K1 --- appearing as isolated nodes. This
	pattern is consistent with the predominantly post-translational nature
	of signalling through these cascades: AKT activation, RAS--RAF--MEK
	relay, and mTOR complex assembly are all regulated by phosphorylation
	events whose upstream mRNA levels need not co-vary. PTEN and AKT2
	are dysregulated in breast cancer primarily through genomic deletion
	and post-translational mechanisms rather than transcriptional co-regulation,
	explaining their isolation from the PI3K cluster (\cite{miricescu2020pi3k}). Similarly, KRAS dysregulation in breast
	cancer is predominantly mutational rather than transcriptional, and
	the isolation of MAP2K1 alongside retention of MAP2K2 reflects the
	near-perfect functional redundancy of MEK1 and MEK2 and the tendency
	of lasso-type penalties to select one representative from a collinear
	pair. The single recovered internal edge, FGF1--ARAF, is interpretable
	because both genes are subject to transcriptional regulation across
	ER subtypes, unlike the post-translational relay nodes of the cascade
	(\cite{bahar2023targeting}). 
	
	The Wnt/$\beta$-catenin module displays sparse internal
	connectivity, recovering only two internal edges: LRP6--FZD7 and
	FZD1--TCF7, while the canonical intracellular relay components ---
	DVL1, DVL3, APC, APC2, CTNNB1, and CSNK1A1 --- are internally disconnected.
	This is expected: the destruction complex relay is entirely post-translational,
	with APC, AXIN, CK1$\alpha$, and GSK3$\beta$
	sequentially phosphorylating $\beta$-catenin for proteasomal
	degradation, meaning the mRNA levels of these components need not
	co-vary for the pathway to function. Furthermore, CTNNB1 mRNA is constitutively
	expressed in breast cancer without the mutational dysregulation seen
	in colorectal cancer --- APC alterations account for only 2.4\% of
	breast cancer cases and CTNNB1 alterations are roughly ten-fold less
	frequent than in colorectal cancer --- suppressing their transcriptional
	variance across the cohort. The recovered LRP6--FZD7 edge is biologically
	interpretable, as LRP6 overexpression and elevated FZD7 protein levels
	have both been reported in breast cancer, supporting their co-upregulation
	as components of the same receptor complex (\cite{abreu2022wnt}). 
	
	A notable structural feature of the network is that genes from sparsely
	connected modules are not isolated from the broader graph --- rather,
	they connect to other pathway modules through cross-pathway edges
	that reflect shared transcriptional regulatory relationships with
	genes from adjacent modules. This produces a topology in which the
	Wnt, PI3K, and MAPK components serve as bridging nodes linking the
	denser cell cycle and Notch clusters, capturing the cross-pathway
	transcriptional crosstalk that characterises breast cancer signalling.
	While the estimated network does not imply causal directionality,
	it provides a statistically grounded and biologically interpretable
	representation of the shared conditional dependence structure underlying
	the KEGG Breast Cancer pathway across ER-positive and ER-negative
	tumour strata, with differences in regulatory intensity between subtypes
	captured through the population-specific matrices rather than through changes
	in network topology.
	
	\begin{figure}
		
		\centering{
			\subfloat[ER-positive]{
				
				\includegraphics[scale=0.3]{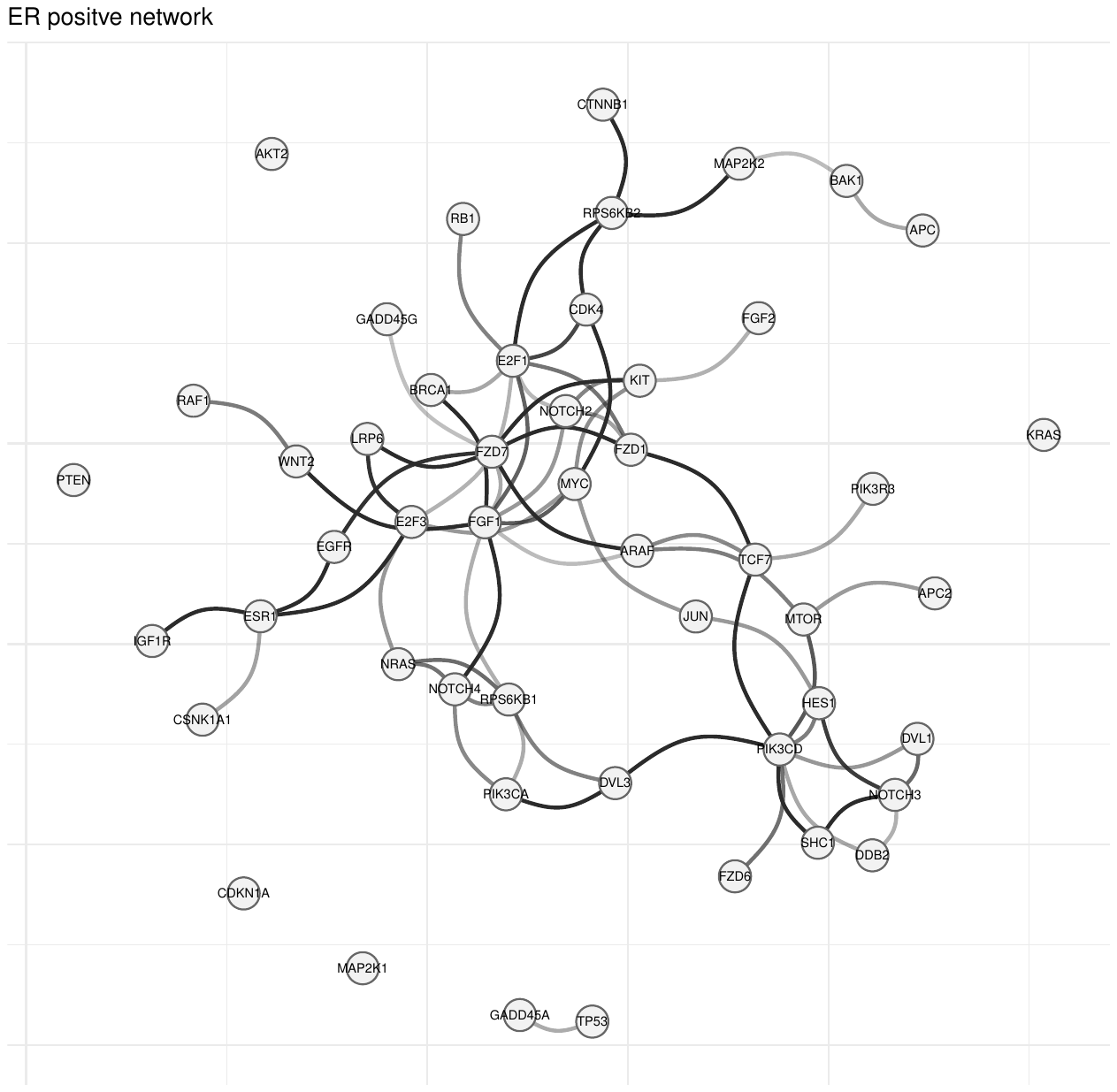}}
			\subfloat[ER-negative]{
				
				\includegraphics[scale=0.3]{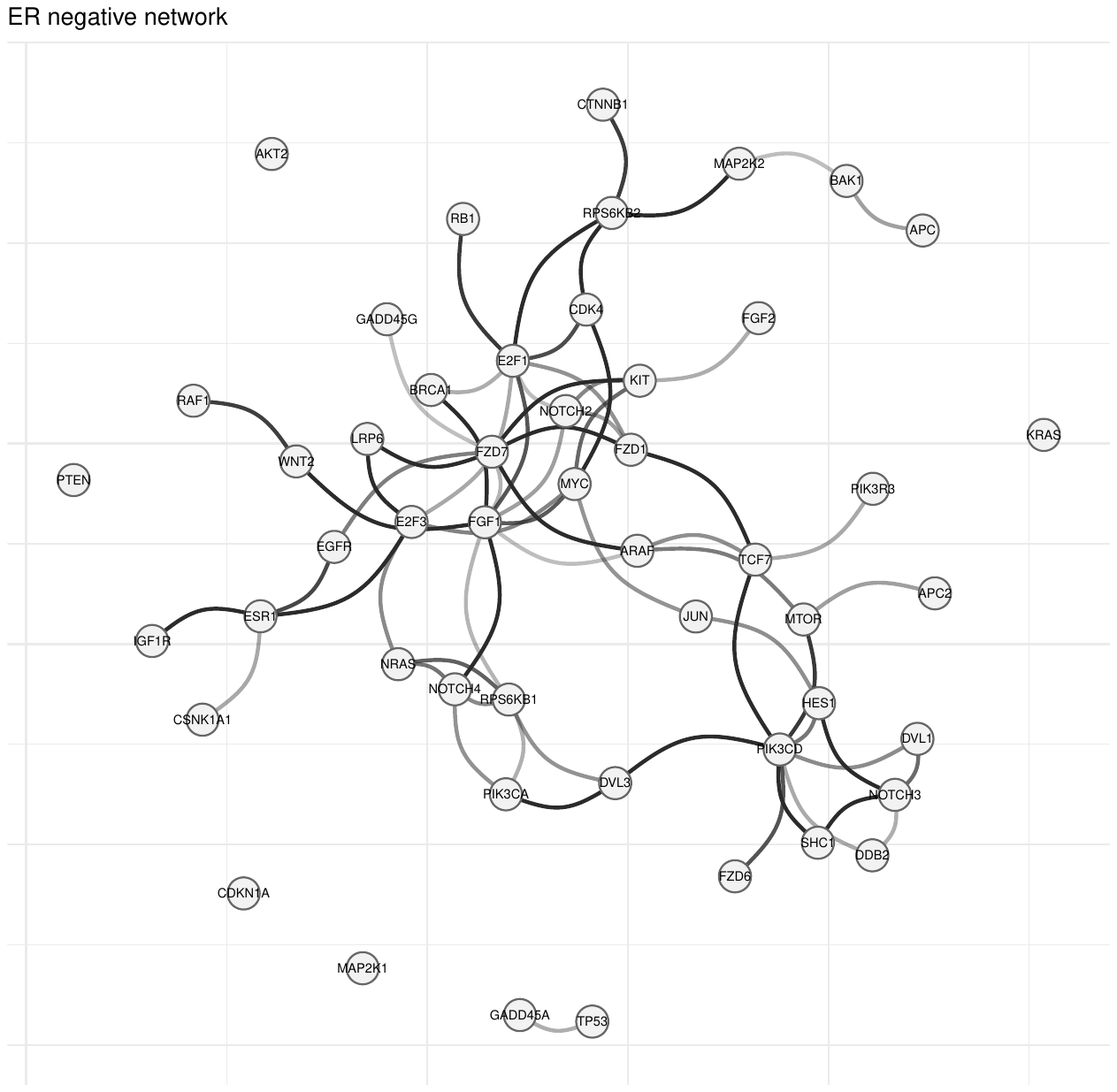}}				
		}
		
		\caption{\label{Figure 17}Population-specific gene regulatory network estimated for the ER-positive subgroup with $N_{1}=297$(Panel a) and ER-negative with $N_{2}=205$ (Panel b). Edge weights reflect the relative strength of conditional dependencies, capturing amplified regulatory interactions characteristic of estrogen receptor-driven tumours. Greater edge opacity indicates a stronger conditional dependence. Differences in edge intensity between the networks reflect differential suppression or upregulation of individual pathway interactions.}
		
	\end{figure}
	
	\newcolumntype{L}[1]{>{\raggedright\arraybackslash}p{#1}}
	
	\begin{landscape}          
		\small
		\setlength{\tabcolsep}{3pt}
		\renewcommand{\arraystretch}{1.05}
		
		\begin{longtable}{|L{3.0cm}|L{3.0cm}|L{3.5cm}|L{12.0cm}|}
			\caption{Functional annotation of the $p=50$ genes retained after
				mapping the GSE25066 expression data to the KEGG Breast Cancer
				signalling pathway (hsa05224). Genes are grouped by pathway module,
				with canonical roles and specific relevance to ER-positive breast
				cancer biology noted for each.}
			\label{tbl-genes}\\
			\hline
			\textbf{Gene Symbol} &
			\textbf{Functional Module} &
			\textbf{Canonical Cellular Function} &
			\textbf{Specific Relevance in ER+ Breast Cancer} \\
			\hline\hline
			\endfirsthead
			
			\multicolumn{4}{l}{\small\textit{(continued from previous page)}}\\[2pt]
			\hline
			\textbf{Gene Symbol} &
			\textbf{Functional Module} &
			\textbf{Canonical Cellular Function} &
			\textbf{Specific Relevance in ER+ Breast Cancer} \\
			\hline\hline
			\endhead
			
			\hline
			\multicolumn{4}{r}{\small\textit{(continued on next page)}}\\
			\endfoot
			
			\hline
			\endlastfoot
			
			AKT2 &
			PI3K/AKT signalling &
			Serine/threonine kinase (cell survival) &
			Downstream effector of PI3K; hyperactivation promotes survival
			and resistance to anti-estrogen therapies. \\
			\hline
			
			APC / APC2 &
			Wnt/$\beta$-catenin signalling &
			WNT pathway tumor suppressors &
			Loss or downregulation can unleash Wnt signaling,
			which heavily cross-talks with ER to drive tumor growth. \\
			\hline
			
			ARAF / RAF1 &
			RAS/MAPK signalling &
			Kinases in the MAPK cascade &
			MAPK pathway hyperactivation is a classic mechanism tumors
			use to bypass the need for estrogen (endocrine resistance). \\
			\hline
			
			BAK1 &
			Apoptosis / DNA damage &
			Pro-apoptotic protein &
			Often suppressed in ER+ tumors to evade programmed cell
			death induced by treatments. \\
			\hline
			
			BRCA1 &
			Apoptosis / DNA damage &
			DNA damage repair &
			While classically linked to Triple-Negative Breast Cancer,
			mutations or altered expression still impact genomic stability
			in ER+ subtypes. \\
			\hline
			
			CDK4 &
			Cell cycle regulation &
			Cell cycle regulator (G1 phase) &
			Highly actionable target. CDK4/6 inhibitors
			(e.g., Palbociclib) are standard-of-care treatments
			for advanced ER+ breast cancer. \\
			\hline
			
			CDKN1A (p21) &
			Cell cycle / Apoptosis &
			Cell cycle inhibitor &
			Often deregulated; normally halts the cell cycle,
			but ER+ tumors bypass it to maintain unchecked division. \\
			\hline
			
			CSNK1A1 &
			Wnt/$\beta$-catenin signalling &
			Kinase regulating WNT/cell division &
			Modulates both Wnt and p53 pathways,
			influencing tumor cell proliferation. \\
			\hline
			
			CTNNB1 &
			Wnt/$\beta$-catenin signalling &
			Beta-catenin (WNT signaling) &
			Interacts directly with the Estrogen Receptor; nuclear accumulation
			often correlates with poor prognosis in ER+ disease. \\
			\hline
			
			DDB2 &
			Apoptosis / DNA damage &
			DNA repair protein &
			Expression levels can influence the tumor's response
			to DNA-damaging chemotherapies. \\
			\hline
			
			DVL1 / DVL3 &
			Wnt/$\beta$-catenin signalling &
			WNT signaling mediators &
			Upregulation facilitates Wnt pathway hyperactivity,
			contributing to tumor progression and therapy resistance. \\
			\hline
			
			E2F1 / E2F3 &
			Cell cycle regulation &
			Transcription factors (cell cycle) &
			Drive the transition into the S-phase of the cell cycle;
			often overactive in ER+ tumors,
			targeted indirectly by CDK4/6 inhibitors. \\
			\hline
			
			EGFR &
			RAS/MAPK signalling &
			Growth factor receptor &
			Overexpression is a well-known escape mechanism for
			ER+ tumors treated with tamoxifen or aromatase inhibitors. \\
			\hline
			
			ESR1 &
			Oestrogen receptor / Transcription &
			Estrogen Receptor Alpha &
			The primary driver of ER+ breast cancer.
			Mutations in this gene (e.g., Y537S, D538G)
			frequently emerge to cause acquired resistance to therapy. \\
			\hline
			
			FGF1 / FGF2 &
			RAS/MAPK signalling &
			Fibroblast growth factors &
			Secreted ligands that activate FGFRs,
			providing alternative, estrogen-independent growth signals
			to the tumor. \\
			\hline
			
			FZD1/6/7 &
			Wnt/$\beta$-catenin signalling &
			Receptors for WNT ligands &
			Cell surface receptors that capture Wnt signals,
			driving the Wnt/beta-catenin cross-talk with ESR1. \\
			\hline
			
			GADD45A/G &
			Apoptosis / DNA damage &
			Stress/DNA damage responders &
			Modulate cell cycle arrest and apoptosis
			in response to chemotherapy; often silenced in tumors. \\
			\hline
			
			HES1 &
			Notch signalling &
			Notch pathway transcription factor &
			Associated with maintaining cancer stem cell populations
			and promoting metastasis in ER+ breast cancer. \\
			\hline
			
			IGF1R &
			PI3K/AKT signalling &
			Insulin-like growth factor receptor &
			Heavily cross-talks with ESR1.
			IGF1R signaling can activate the Estrogen Receptor
			even in the absence of estrogen. \\
			\hline
			
			JUN &
			RAS/MAPK signalling &
			AP-1 transcription factor subunit &
			Collaborates with ESR1 at the genomic level to drive transcription
			of genes promoting tumor proliferation. \\
			\hline
			
			KIT &
			RAS/MAPK signalling &
			Receptor tyrosine kinase &
			Overexpression can stimulate PI3K/AKT and MAPK pathways,
			supporting tumor growth. \\
			\hline
			
			KRAS / NRAS &
			RAS/MAPK signalling &
			GTPase signaling switches &
			While activating mutations are rare in ER+ breast cancer,
			altered expression still impacts MAPK signaling. \\
			\hline
			
			LRP6 &
			Wnt/$\beta$-catenin signalling &
			WNT co-receptor &
			Essential for canonical Wnt signaling;
			its upregulation supports Wnt/ER cross-talk. \\
			\hline
			
			MAP2K1 / MAP2K2 &
			RAS/MAPK signalling &
			MEK1 and MEK2 kinases &
			Central nodes in the MAPK pathway;
			targeted by MEK inhibitors to combat endocrine resistance. \\
			\hline
			
			MTOR &
			PI3K/AKT signalling &
			Master metabolic regulator &
			Highly actionable target. mTOR inhibitors (e.g., Everolimus)
			are used in ER+ breast cancer to overcome endocrine resistance. \\
			\hline
			
			MYC &
			Oestrogen receptor / Transcription &
			Oncogenic transcription factor &
			Amplification or overexpression is strongly associated
			with aggressive ER+ tumors, poor outcome, and therapy resistance. \\
			\hline
			
			NOTCH2/3/4 &
			Notch signalling &
			Receptors for Notch signaling &
			Linked to the expansion of breast cancer stem cells
			and resistance to endocrine therapy. \\
			\hline
			
			PIK3CA &
			PI3K/AKT signalling &
			PI3K catalytic subunit alpha &
			One of the most frequently mutated genes in ER+ breast cancer
			($\sim$40\%). Mutations hyperactivate the pathway
			and drive endocrine resistance. \\
			\hline
			
			PIK3CD / PIK3R3 &
			PI3K/AKT signalling &
			PI3K pathway components &
			Modulate the overall activity of the PI3K/AKT/mTOR cascade,
			influencing tumor survival. \\
			\hline
			
			PTEN &
			PI3K/AKT signalling &
			Tumor suppressor (PI3K inhibitor) &
			Frequently lost or downregulated in ER+ breast cancer,
			leading to unchecked PI3K pathway activation. \\
			\hline
			
			RB1 &
			Cell cycle regulation &
			Retinoblastoma tumor suppressor &
			The direct target of CDK4/6. Loss of RB1 is a primary mechanism
			by which ER+ tumors become resistant to CDK4/6 inhibitors. \\
			\hline
			
			RPS6KB1/2 &
			PI3K/AKT signalling &
			Ribosomal S6 kinases &
			Downstream effectors of mTOR; highly active in resistant ER+ tumors,
			driving the protein synthesis required for rapid growth. \\
			\hline
			
			SHC1 &
			RAS/MAPK signalling &
			Signaling adapter protein &
			Links activated growth factor receptors (like EGFR/IGF1R)
			to the MAPK and PI3K pathways. \\
			\hline
			
			TCF7 &
			Wnt/$\beta$-catenin signalling &
			WNT transcription factor &
			Partners with beta-catenin (CTNNB1)
			to express genes promoting cell division. \\
			\hline
			
			TP53 &
			Apoptosis / DNA damage &
			``Guardian of the genome'' &
			Mutated in $\sim$20\% of luminal A (ER+) tumors,
			but higher in luminal B (a more aggressive ER+ subtype);
			correlates with worse prognosis. \\
			\hline
			
			WNT2 &
			Wnt/$\beta$-catenin signalling &
			Secreted WNT ligand &
			Upregulated in the tumor microenvironment; promotes tumor growth,
			invasion, and interactions with stromal cells. \\
			\hline
			
		\end{longtable}
	\end{landscape}
	\subsection{Full tables corresponding to analysis of the illustrative example with heterogeneous sparsity}
	\begin{table}[htbp]
		\centering
		\caption{ Number of replications (out of $B=100$) in which the estimated edge
			sets $\hat{E}_{i}$ are identical across all three populations, reported
			as a function of sample size $n$.}
		\label{Table 8.1}
		
		\begin{tabular}{lcccccc}
			\toprule
			$n=100$ & $n=200$ & $n=300$ & $n=400$ & $n=500$ & $n=600$ \\
			\midrule
			
			100 & 100 & 99 & 97 & 98 & 100 \\
			
			\bottomrule
		\end{tabular}
	\end{table}
	\begin{table}[htbp]
		\centering
		\caption{ Replication frequency (out of $B=100$) with which each common edge
			in $E_{common}$ is correctly identified by the Mglasso estimator
			across all three populations jointly, as a function of sample size
			$n$.}
		\label{Table 8.2}
		
		\begin{tabular}{lccccccc}
			\toprule
			Common edges & $n=100$ & $n=200$ & $n=300$ & $n=400$ & $n=500$ & $n=600$ \\
			\midrule
			
			(1,2) & 75 & 99 & 100 & 100 & 100 & 100\\ 
			(3,4) & 84 & 96 & 99 & 100 & 100 & 100\\
			(6,7) & 79 & 95 & 100 & 100 & 100 & 100\\
			(10,11) & 80 & 97 & 100 & 100 & 100 & 100\\
			(14,15) & 88 & 98 & 100 & 100 & 100 & 100\\
			(17,18) & 81 & 98 & 100 & 100 & 100 & 100\\
			(19,20) & 88 & 99 & 100 & 100 & 100 & 100\\ 
			(20,21) & 82 & 97 & 99 & 100 & 100 & 100\\
			(23,24) & 86 & 98 & 100 & 100 & 100 & 100\\
			
			\bottomrule
		\end{tabular}
	\end{table}
	\begin{table}[htbp]
		
		\centering
		\caption{ \footnotesize Replication frequency (out of $B=100$) with which each uncommon edge
			in $E_{uncommon}$ is identified in the estimated precision matrix
			as a function of sample size $n$. Rows are left blank where the edge
			is truly absent in that population.}
		\label{Table 8.3}
		\footnotesize
		\begin{tabular}{lcccccccc}
			\toprule
			Unommon edges & & $n=100$ & $n=200$ & $n=300$ & $n=400$ & $n=500$ & $n=600$ \\
			\midrule
			
			& Population 1 & 31 & 48 & 39 & 45 & 36 & 38\\
			(8,9)& Population 2 & 31 & 48 & 39 & 45 & 36 & 38\\
			& Population 3 &  &  &  &  &  & \\
			& Population 1 & 29 & 45 & 36 & 43 & 40 & 24\\
			(12,13)& Population 2 & 29 & 45 & 36 & 43 & 40 & 24\\
			& Population 3 &  &  &  &  &  & \\
			& Population 1 & 33 & 37 & 41 & 38 & 36 & 39\\
			(22,23)& Population 2 & 33 & 37 & 41 & 38 & 36 & 39\\
			& Population 3 &  &  &  &  &  & \\
			& Population 1 & 12 & 38 & 45 & 39 & 37 & 42\\
			(5,6)& Population 2 &  &  &  &  &  & \\
			& Population 3 & 12 & 38 & 45 & 39 & 37 & 42\\
			& Population 1 & 16 & 43 & 47 & 40 & 40 & 40\\
			(11,12)& Population 2 &  &  &  &  &  & \\
			& Population 3 & 16 & 43 & 47 & 40 & 40 & 40\\ 
			& Population 1 & 17 & 45 & 42 & 41 & 38 & 34\\
			(15,16)& Population 2 &  &  &  &  &  & \\
			& Population 3 & 17 & 45 & 42 & 41 & 38 & 34\\
			& Population 1 &  &  &  &  &  & \\
			(13,14)& Population 2 & 12 & 23 & 20 & 14 & 18 & 18\\
			& Population 3 &  &  &  &  &  & \\
			& Population 1 &  &  &  &  &  & \\
			(9,10)& Population 2 &  &  &  &  &  & \\
			& Population 3 & 4 & 11 & 20 & 17 & 14 & 8\\
			
			\bottomrule
		\end{tabular}
	\end{table}
	\begin{table}[htbp]
		\centering
		\caption{ Average number of true 8 uncommon edges $E_{uncommon}$ identified
			by the estimated precision matrices across different sample sizes
			$(n)$. Values are averaged over $B=100$ replications}
		\label{Table 8.4}
		
		\begin{tabular}{lcccccc}
			\toprule
			$n=100$ & $n=200$ & $n=300$ & $n=400$ & $n=500$ & $n=600$ \\
			\midrule
			
			1.54 & 2.9 & 2.9 & 2.76 & 2.58 & 2.43 \\
			
			\bottomrule
		\end{tabular}
	\end{table}
	\begin{table}[htbp]
		\centering
		\caption{ Average number of 283 pairs of coordinates from $E^{c}$ falsely identified
			by the estimated precision matrices across different sample sizes
			$(n)$. Values are averaged over $B=100$ replications for each population}
		\label{Table 8.5}
		
		\begin{tabular}{lcccccc}
			\toprule
			$n=100$ & $n=200$ & $n=300$ & $n=400$ & $n=500$ & $n=600$ \\
			\midrule
			
			0.16 & 1.1 & 7.72 & 7.21 & 6.06 & 5.16 \\
			
			\bottomrule
		\end{tabular}
	\end{table}
	
	\bibliographystyle{apalike}
	\bibliography{references}
\end{document}